\newcommand{\R}{\mathbb R}
\newcommand{\N}{\mathbb N}
\definecolor{ao(english)}{rgb}{0.0, 0.5, 0.0}
\allowdisplaybreaks \numberwithin{equation}{section}
\theoremstyle{plain}
\newtheorem{theorem}{Theorem}[section]
\newtheorem{proposition}[theorem]{Proposition}
\newtheorem{lemma}[theorem]{Lemma}
\newtheorem{corollary}[theorem]{Corollary}
\theoremstyle{definition}
\newtheorem{definition}[theorem]{Definition}
\newtheorem{remark}[theorem]{Remark}
\def \div {\mathop {\rm div}\nolimits}
\def \dive {\mathop {\rm div}\nolimits}
\def \dist {\mathop {\rm dist}\nolimits}
\def \det {\mathop {\rm det}\nolimits}
\def \spt {\mathop {\rm spt}\nolimits}
\def \supp {\mathop {\rm spt}\nolimits}
\def \de {\mathrm{d}}
\def \di {\mathrm{d}}
\def \e {\mathrm{E}}
\def \re {\mathbb R}
\def \RR {\mathcal{R}}
\def \O {\Omega }
\def \Om {\Omega }
\def \C {\mathbb C}
\def \HH {\mathcal H}
\def \E {\mathrm{E}}
\def \F {\mathcal F}
\def \En {\mathcal E}
\def \B {\mathrm{B}}
\def \G {\mathcal G}
\def \M {\mathbb{M}}
\def \II {\mathcal{I}}
\def \ee {\strain}
\def \strain {\boldsymbol{\varepsilon}}
\def \be {\begin{equation}}
\def \ee {\end{equation}}
\def \sbq {\subseteq}
\def \spq {\supseteq}
\def \setdiff { {\setminus} }
\def \wto {\rightharpoonup}
\def \diam {\mathop{\mathrm{diam}}}
\def \Om {\Omega}
\def \Ga {\Gamma}
\def \ga {\gamma}
\def \Haus{\mathcal{H}^1}
\def \K {\mathcal{K}}
\def \eps {\varepsilon}
\def \Lip {\mathrm{Lip}}
\def \adcr {\RR_\eta}
\begin{document}
\author{Stefano Almi}\author{Giuliano Lazzaroni}\author{Ilaria Lucardesi}
\address[Stefano Almi]{Universit\"at Wien, Oskar-Morgenstern-Platz 1, 1090 Vienna, Austria}
\email{stefano.almi@univie.ac.at}
\address[Giuliano Lazzaroni]{Dipartimento di Matematica e Informatica ``Ulisse Dini'',
Universit\`a degli Studi di Firenze, Viale Morgagni 67/a, 50134 Firenze, Italy}
\email{giuliano.lazzaroni@unifi.it}
\address[Ilaria Lucardesi]{Institut \'Elie Cartan de Lorraine, BP 70239, 54506 Vandoeuvre-l\`es-Nancy, France}
\email{ilaria.lucardesi@univ-lorraine.fr}
\thanks{\today}
\title{Crack growth by vanishing viscosity in planar elasticity}
\begin{abstract}
We show the existence of quasistatic evolutions in a fracture model
for brittle materials by a vanishing viscosity approach,
in the setting of planar linearized elasticity.
The crack is not prescribed a priori and is selected in a class of (unions of) regular curves.
To prove the result, it is crucial to analyze the properties of the energy release rate.
\end{abstract}
\maketitle
{\small
\keywords{\noindent {\bf Keywords:}
Free-dis\-con\-ti\-nu\-i\-ty problems;
brittle fracture; crack propagation; 
vanishing viscosity; local minimizers;
energy derivative; 
Griffith's criterion; energy release rate; stress intensity factor.
}
\par
\subjclass{\noindent {\bf 2010 MSC:}
35R35, 
35Q74, 
74R10, 
74G70, 
49J45. 
}
}
\section*{Introduction}
In many applications of engineering, it is crucial to predict the propagation of fracture in structures and to understand whether cracks are stable. When the external loading is very slow if compared with the time scale of internal oscillations (such as, e.g., in a building in standard conditions), it is possible to ignore inertia and to assume that the system is always at equilibrium: the resulting model si called \emph{quasistatic}. Quasistatic (or rate-independent) processes have been extensively analyzed in the mathematical literature both in the context of fracture and of other models (see \cite{MR3380972} and references therein).
\par
The first difficulties in modeling fracture are related to  identifying equilibrium configurations. In fact, in order to state that a configuration is stable, one would have to use a derivative of the mechanical energy with respect to the crack set, which is not well defined. Thus one may prefer a derivative-free formulation where equilibria are restricted to \emph{global} minimizers (of the sum of the mechanical energy and of the dissipated energy due to crack growth), in the context of energetic solutions to rate-independent systems, see e.g.\ \cite{MR1633984,MR1897378,MR1978582,MR1988896,MR2186036,MR2836349,MR3739927}.
\par
A second approach allows one to take into account of more equilibria by restricting the set of the admissible cracks. In fact, the problem is to select a class of regular curves and to prove the existence of a derivative of the mechanical energy with respect to the elongation of a crack in that class. The opposite of this derivative is called \emph{energy release rate} and represents the gain in stored elastic energy due to an infinitesimal crack growth. Griffith's criterion \cite{Grif163} allows crack growth only when the energy release rate reaches the \emph{toughness} of the material (i.e., the energy spent to produce an infinitesimal crack).
\par
In this context, some existence results for crack evolution were given in the case of \emph{antiplane} linear elasticity, where the deformation is represented by a scalar function (that is the vertical displacement, depending on the two horizontal components, while the horizontal displacement is zero). The results of \cite{MR2446401} and \cite{MR2472402} deal with the case of a \emph{prescribed crack}, i.e., before the evolution starts one already knows the set which is going to crack. An algorithm for predicting a stable crack path (chosen from a class of regular curves) was proposed in \cite{MR2851705} and extended in \cite{MR3591293} to a class of curves with branches and kinks. A nonlinear model with vector displacements (still in dimension two) was studied in \cite{MR2658341} for a prescribed crack path.
\par
In this paper we prove an existence result for crack evolution based on Griffith's criterion, in the context of \emph{planar} linear elasticity, in dimension two. In this case the displacement is a vector (with two components). In our model, the path followed by the crack is not a priori known. In fact, the crack is assumed to be the union of a fixed number of $C^{1,1}$ curves and is selected among a class of (unions of) curves with bounded curvature, with no self-intersections, and with at most one point meeting the boundary of the domain (in the reference configuration). Some geometric constraints guarantee that this class is compact with respect to the Hausdorff convergence of sets. The same class of admissible cracks was considered in \cite{MR2851705}.
\par
In order to write the flow rule for crack propagation, we need the expression of the energy release rate. The first step is to prove that, when the crack is a prescribed curve, then the mechanical energy (i.e., the sum of the stored elastic energy and of the work of external volume and surface forces) is differentiable with respect to the arc length of the curve, and its derivative can be written as a surface integral depending on the deformation gradient (Proposition \ref{prop-err}). Since we want a model predicting the crack path (not prescribed a priori), we need to prove that the energy release rate is independent of the extension of the crack (in the class of $C^{1,1}$ curves). This is done in Theorem \ref{t.ERR}. Moreover, the energy release rate is continuous with respect to the Hausdorff convergence of cracks (see Remark \ref{r.ERR2}).
When there are more curves, there is an energy release rate for each crack tip.
\par
Proving such properties of the energy release rate(s) is fundamental to study quasistatic crack evolution and is the major technical difficulty of this work. In fact, the strategy of the proof differs from the method used in the corresponding results in the antiplane case, cf.\ \cite{MR2802893,MR2851705}. 
In planar elasticity, assuming that the crack is $C^{\infty}$, that there are no external forces, and that the elasticity tensor is constant,
it was proven in~\cite{MR3844481} that the energy release rate can be expressed in terms of two stress intensity factors, which characterize the singularities of the elastic equilibrium;
since the stress intensity factors only depend on the current crack, it turns out that the energy release rate is independent of the crack's extension.
In this paper 
we need a corresponding property for $C^{1,1}$ cracks (in the class where we have compactness with respect to Hausdorff convergence) and for energies with external forces and nonconstant elasticity tensor. The same strategy does not apply to the nonsmooth case, in particular we do not prove the existence of the stress intensity factors; nonetheless, we prove that the energy release rate is stable under Hausdorff convergence in the class of $C^{1,1}$ cracks, so we can
employ the results of \cite{MR3844481} 
via some approximation arguments 
(see Section \ref{s.ERR}).
\par
We point out that an energy release rate associated with a crack tip does exist also under much weaker regularity conditions on the crack set. 
For instance, the results of \cite{MR3366673} apply to cracks that are merely closed and connected.
However, in this setting energy release rates can be characterized just up to subsequences through a blow-up limit, thus uniqueness is not guaranteed and, ultimately, the independence on extensions may not hold.
On the other hand, the results of \cite{MR2665275} do not have this limitation, but the initial crack needs to be straight, which makes it impossible to use such characterization in the context of an evolution problem.
(We also refer to \cite{MR3070557} for related results in antiplane elasticity.) For these reasons in this paper we resort to the class of (unions of) $C^{1,1}$ cracks where, as mentioned, better properties can be proven.
\par
This allows us to employ the well known vanishing viscosity method for finding \emph{balanced viscosity solutions} to rate-independent systems, see \cite{MR2446401,MR2851705,MR3591293} for fracture in antiplane elasticity and \cite{MR3380972} for further references. We fix a time discretization and solve some incremental problems where we minimize the sum of the mechanical energy and of the dissipated energy.
Notice that in the present work the dissipated energy density is nonconstant and depends on the position of the crack tip in the reference configuration.
In the minimum problems, the total energy is perturbed with a term penalizing brutal propagations between energy wells, multiplied by a parameter $\eps$.
Passing to the continuous time, we obtain a viscous version of Griffith's criterion, with a regularizing term multiplied by $\eps$; a second passage to the limit as $\eps\to0$ leads to rate-independent solutions.
It is also possible to characterize the time discontinuities of the resulting evolution using the reparametrization technique first proposed in \cite{MR2211809} and then refined in \cite{MR2525194,MR2887927,MR3531671,MR3264231}.
\par
The main result of this paper, extending the results of \cite{MR2851705} to planar elasticity, 
is the existence of a quasistatic evolution (more precisely, a balanced viscosity evolution) fulfilling Griffith's criterion: the length of each component of the crack is a nondecreasing function of time;
at all continuity points of these functions, the energy release rate at each tip is less than or equal to the material's toughness at that tip (which is a stability condition);
the length is increasing only if the energy release rate reaches the toughness.
Moreover, time discontinuities (corresponding to brutal propagation) can be interpolated by a transition, characterized by a viscous flow rule, where the energy release rates are larger than or equal to the toughness (see e.g.\ \cite{MR2745199,MR2915331,MR3021776,MR3332887} for corresponding results in damage and plasticity).
\subsection*{Notation}
Given two vectors $a,b \in \re^d$, their scalar product is denoted by $a \cdot b$. We set $\M^{d}$ the space of $d\times d$ square matrices, and we denote by $\M^{d}_{sym}$ and $\M^d_{skw}$ the subsets of symmetric and skew-symmetric ones, respectively. We set $\mathbf{I}$ the identity matrix in $\M^d$. Given $A$ and $B$ in $\M^d$, we write $A:B$ to denote their Euclidean scalar product, namely $A:B = A_{ij} B_{ij}$. Here and in the rest of the paper we adopt the convention of summation over repeated indices. For every $p\geq1$ we define the $p$-norm in $\mathbb R^d$ as $|x|_p:=\big(\sum_{i=1}^d |x_i|^p\big)^{1/p}$. The 2-norm will be simply denoted by $|\cdot |$. The latter induces the distance $\dist(C,D):= \inf\{|x-y|\,:\, \,x\in C,\,y\in D\}$ between two sets $C$ and $D$. The maximal distance between two points of a set $E$, namely its diameter, is denoted by $\diam(C)$. 

The symbol $\B_\rho(x)$ denotes the open ball of radius $\rho$ in $\R^2$, centred at $x$. The support of a function $f$, namely the closure of $\{f\neq 0\}$, is denoted by  $\spt(f)$. For a tensor field $V\in C^1(\re^d; \M^{d})$, by $\div V$ we mean its divergence with respect to lines, namely $(\div V)_i:=\partial_j V_{ij}$. The symmetric gradient of a vector field $u\in C^1(\re^d;\re^d)$ is denoted by $\E u$, namely $(\E u)_{ij}:=(\partial_i u_j + \partial_j u_i)/2$.

We adopt standard notations for Lebesgue and Sobolev spaces on a bounded open set of $\re^d$. The boundary values of a Sobolev function are always intended in the sense of traces. Boundary integrals on Lipschitz curves are done with respect to the 1-dimensional Hausdorff measure $\mathcal H^1$. Given an interval $I\subset \re$ and a Banach space $X$, $L^p(I; X)$ is the space of $L^p$ functions from $I$ to $X$. Similarly, the sets of continuous and absolutely continuous functions from $I$ to $X$ are denoted by $C^0(I;X)$ and $AC(I;X)$, respectively. Derivatives of functions depending on one variable are denoted by a prime or, when the variable is  time, by a dot.

The identity map in a vector space is denoted by $id$. Given a normed vector space $X$ the norm in $X$ is denoted by $\|\cdot\|_X$. We adopt the same notation also for vector valued functions in $X$. For brevity, the norm in $L^p$ over an open set $\Omega$ of $\mathbb R^d$ is denoted by $\|\cdot \|_{p, \Omega}$ or, when no ambiguity may arise, simply by $\|\cdot \|_p$.
\par


\section{Description of the model and existence results} \label{s:model}
We describe a crack model in \emph{planar elasticity} for a \emph{brittle} body. 
The \emph{body} is represented in its reference configuration by an infinite cylinder $\Om\times\R$, where $\Om\subset\R^2$ is a bounded connected open set, with Lipschitz boundary. 
By assumption, the \emph{displacement} $u$ produced by the external loading is horizontal and depends only on the two horizontal components: the \emph{deformation} is then given by
\[ 
\Om\times\R \ni (x_1,x_2,x_3) \mapsto (x_1+u_1(x_1,x_2),x_2+u_2(x_1,x_2),x_3) \,,
\quad \text{where}\ u=(u_1,u_2)\colon \Om \to \R^2 \,.
\]
\par
\subsection{Admissible cracks}
The set of possible discontinuity points of $u$ (the \emph{crack}) is assumed to lie in a class of admissible regular cracks. We now define such class following \cite{MR2802893,MR2851705}. 
It depends on a parameter $\eta>0$ that is thought as small, but is fixed throughout the paper.
\par
\begin{definition} \label{defin:adcr}
Fixed $\eta>0$, the set $\RR_{\eta}^0$ contains all closed subsets $\Ga\sbq\overline\Om$ such that 
\begin{itemize}
\item[(a)] $\Ga$ is a union of a finite number of arcs of $C^{1,1}$ curves, each of them intersecting $\partial\Om$ in at most one endpoint,
\item[(b)] $\Haus(\Gamma\cap\Om)>0$ and $\Om\setminus\Ga$ is a connected open set, union of a finite number of Lipschitz domains,
\item[(c)]
for every $x\in\Ga$ there exist two open balls $\B_\eta^1,\B_\eta^2 \sbq\R^2$ of radius $\eta$ such that
\be \label{doppia-palla}
\overline{\B}_\eta^1\cap\overline{\B}_\eta^2=\{x\}
\ \text{and}\ 
( \B_\eta^1\cup \B_\eta^2) \cap \Ga = \emptyset \,.
\ee
\end{itemize}
Furthermore, we denote with~$\mathcal{R}^{0,1}_{\eta}$ the class of curves~$\Gamma \in \adcr^{0}$ such that~$\Gamma$ is one arc of curve of class~$C^{1,1}$ intersecting $\partial\Om$ in exactly one endpoint.
\end{definition}
Notice that $\RR_\eta^0\sbq\RR_{\eta'}^0$ if $\eta>\eta'$.
The role of \eqref{doppia-palla} is twofold: on the one hand 
it gives a uniform bound (depending on $1/\eta$) on the curvature of each connected component of any set $\Ga\in\RR_\eta^0$,
on the other hand it ensures that each  of these components is an arc of a simple curve, i.e., a curve with no self-intersections.
Because of $(a)$, each of the arcs has one or two endpoints contained in $\Om$; we say that these points are the \emph{crack tips}.
\par
Since quasistatic models are in general unable to predict crack initiation \cite{MR2385744}, 
i.e., nucleation of a new crack from sound material, 
we assume that there is an \emph{initial crack} $\Ga_0\in\RR_\eta^0$. 
Each connected component of an admissible crack $\Ga$ will be the extension of a connected component of $\Ga_0$, starting from its crack tips.
%
Let $M$ be the number of crack tips of $\Ga_0$; notice that $M$ may be larger than the number of connected components of $\Ga_0$.
We parametrize $\Ga_0$ by introducing $M$ injective functions $\ga^m$  of class $C^{1,1}$, for $m=1,\dots,M$, in the following way:
\begin{itemize}
\item If a connected component $\Ga_0^m$ of $\Ga_0$ intersects $\partial\Om$ in a single endpoint $x_0$, we consider its arc-length parametrization $\ga^m\colon[0,\Haus(\Ga_0^m)]\to\Ga_0^m$ such that $\ga^m(0)\in\partial\Om$ and $\ga^m(\Haus(\Ga_0^m))$ is the crack tip of~$\Ga_0^m$. In particular, a crack in $\mathcal{R}^{0,1}_{\eta}$ has exactly one tip.
\item If a connected component of $\Ga_0$ is contained in $\Om$, we see it as the union of two curves $\Ga_0^m$, $\Ga_0^{m+1}$, intersecting at a single point $\bar x$ (that is not a tip of $\Ga_0$); then we consider two arc-length parametrizations $\ga^m\colon[0,\Haus(\Ga_0^m)]\to\Ga_0^m$, $\ga^{m+1}\colon[0,\Haus(\Ga_0^{m+1})]\to\Ga_0^{m+1}$, such that $\ga^m(0)=\ga^{m+1}(0)=\bar x$ and~$\ga^m(\Haus(\Ga_0^m))$ and $\ga^{m+1}(\Haus(\Ga_0^{m+1}))$ are the two crack tips.
\end{itemize}
We then have $\Ga_0=\bigcup_{m=1}^M \Ga_0^m$. Analogous parametrizations will be used for the extensions of $\Ga_0$.
In the next definition, $M$ is the number fixed above.
\par
\begin{definition} \label{defin:adcr2}
The set $\RR_{\eta}$ contains all subsets $\Ga\in\RR_\eta^0$ 
such that 
\begin{itemize}
\item[(d)] $\Ga$ is the union of $M$ connected subsets $\Ga^1,\dots,\Ga^M$, such that any two of them intersect in up to a point,
\item[(e)] $\Gamma^m\spq \Gamma_{0}^m$ for every $m=1,\dots,M$,
\item[(f)]
for every $m =1,\dots,M$ and for every $x\in\Ga^m\setdiff\Ga_0^m$ 
\[
\B_{2\eta}(x)\cap \Big(\partial\Om \cup \bigcup_{l\neq m} \Ga^l\Big) =\emptyset \,.
\]
\end{itemize}
\end{definition}
Given a set $\Ga=\bigcup_{m=1}^M \Ga^m\in\RR_\eta$, we extend the functions $\ga^m$, $m=1,\dots,M$, defined above, to arc-length parametrizations $\ga^m\colon [0,\Haus(\Ga^m)] \to \Ga^m$; it turns out that they are injective and of class $C^{1,1}$. Properties (a)--(f) ensure that the class $\RR_\eta$ is sequentially compact with respect to Hausdorff convergence (see the next section for details), induced by the following distance.
\par
\begin{definition}\label{def-dh}
Given two compact subsets $\Ga,\Ga'\subset\overline\Om$, their \emph{Hausdorff distance} is given by
\[
d_H(\Ga';\Ga):= \max\left\{ \sup_{x\in \Ga'}\dist(x,\Ga), \ \sup_{x\in \Ga}\dist(x,\Ga') \right\} \,,
\]
with the conventions $d_H(x;\emptyset)=\diam{\Om}$ and $\sup\emptyset=0$.
A sequence $(\Ga_n)_{n\in \mathbb N}$ of compact subsets of $\overline\Om$ \emph{converges to $\Ga$ in the Hausdorff metric} if $d_H(\Ga_n;\Ga)\to0$ as $n\to \infty$.
\end{definition}

\begin{remark}
There are choices of $\Gamma_0$ such that $\RR_\eta$ contains no elements different from $\Ga_0$: we mention a few examples with $\Om=[-1,1]^2$. Let $\Ga_0=[-1,0]\times\{0,\frac14\}$: then $\Ga_0\in\RR_\eta^0$ only if $\eta\le1/8$, thus $\RR_\eta=\emptyset$ if and only if $\eta>1/8$. If instead $\Ga_0=[-1,0]\times\{0\}$, we have $\RR_\eta=\{\Ga_0\}$ if and only if $\eta\ge1/2$.
However, given $\Ga_0$ such that $\RR_\eta$ is trivial, one can find $\eta'<\eta$ such that $\RR_{\eta'}$ contains nontrivial extensions of $\Ga_0$.
Starting from an initial crack with nontrivial extensions, the model described in this paper is reliable as long as our algorithm finds a current configuration $\Ga(t)$ such that there are nontrivial extensions. If, during the evolution, some tip becomes $(2\eta)$-close to $\partial\Om$ or to other connected components of the crack, the results should not be regarded as meaningful.
\end{remark}
\subsection{The mechanical energy and the incremental scheme}
Since the body is brittle, the uncracked part~$\Om\setminus\Ga$ is elastic;
we assume that the displacements are small (so we adopt the setting of linear elasticity) and the crack is traction-free.
We look for evolutions in the time interval $[0,T]$, produced by the time-dependent external loading: 
\begin{itemize}
\item[(H1)] a boundary condition $w\in AC([0,T];H^1(\Om\setdiff\Ga_0;\R^2))$, to be satisfied on a relatively open subset $\partial_D\Om$ of $\partial\Om$, with a finite number of connected components,
\item[(H2)] a volume force $f\in AC([0,T];L^2(\Om\setdiff\Ga;\R^2))$ and a surface force $g\in AC([0,T];L^2(\partial_S\Om;\R^2))$, where~$\partial_S\Om$ is a relatively open subset of $\partial\Om$ such that $\partial_S\Om\Subset\partial\Om\setdiff \overline{\partial_D\Om}$.
\end{itemize}
Without loss of generality, we assume that $\spt(w) \subseteq \{ x \in \overline\Omega \ :\ \dist(x,\partial \Omega) \leq \eta\}$, so $w\equiv0$ around any crack tip.
\par
At each point $x\in\Om$, the stress tensor is $\C(x)\colon \M^{2}_{sym} \to \M^{2}_{sym}$, where
\begin{itemize}
\item[(H3)] $\C(x) A = \lambda(x) \mathrm{tr}(A) \mathbf{I} + 2 \mu(x) A$ for every $A\in\M^{2}_{sym}$, with $\lambda,\mu\in C^{0,1}(\overline\Om)$ such that $\mu(x)>0$ and $\lambda(x) + \mu(x) >0$ for every $x\in\overline\Om$.
\end{itemize}
Notice that the standard conditions $\mu(x)>0$ and $\lambda(x) + \mu(x) >0$ ensure the positive definiteness of $\C(x)$, uniformly in $x$.
\par
Given $t\in[0,T]$ and $\Ga\in\RR_\eta$, the minimum problem
\begin{equation} \label{pb:min}
\min\left\{ \frac 12 \int_{\O\setminus \Gamma} \C \E v : \E v\, \de x - \int_{\O\setminus \Gamma} f(t) \cdot v \, \de x - \int_{\partial_S\Om} g(t) \cdot  v \, \de \Haus 
\colon v\in H^1(\Om\setdiff\Ga;\R^2), \ v=w(t) \ \textrm{on} \ \partial_D\Om \right\} 
\end{equation}
has a unique solution, denoted by
$u(t;\Ga)\colon\Om\setdiff\Ga\to\R^2$, with \emph{elastic energy} 
\[
\En(t;\Ga)\coloneqq \frac 12 \int_{\O\setminus \Gamma_s} \C \E u(t;\Ga) \colon \E u(t;\Ga)\, \de x - \int_{\O\setminus \Gamma} f(t) \cdot  u(t;\Ga) \, \de x - \int_{\partial_S\Om} g(t) \cdot  u(t;\Ga) \, \de \Haus \,.
\]
According to the assumption of brittle behavior, in order to produce a crack the system employs an energy depending (only) on the geometry of the crack itself, in the context of Griffith's theory~\cite{Grif163}.
The \emph{total energy} of the configuration corresponding to a crack $\Ga$ at time $t$ is
\[
\F(t;\Ga):=\En(t;\Ga)+\K(\Ga) \,, \quad \K(\Ga):= \int_{\Ga} \kappa \, \de \Haus \,,
\]
where the surface energy density satisfies
\begin{itemize}
\item[(H4)] $\kappa \in C^{0}(\overline\Om;[\kappa_1,\kappa_2])$,
\end{itemize}
where $0<\kappa_1<\kappa_2$.
\par
Starting from the initial condition $\Ga_0$ fixed above, we define a discrete-time evolution of stable states by solving some incremental minimum problems. 
For every $k\in\N$ we consider a subdivision of the time interval $[0,T]$ in nodes $\{t_{k,i}\}_{0\le i\le k}$ such that
\be \label{defin:time-disc}
0=t_{k,0}<t_{k,1}<\dots<t_{k,k}=T \quad \textrm{and} \quad \lim_{k\to\infty} \max_{1\le i\le k} (t_{k,i}-t_{k,i-1}) =0 \,.
\ee
Fixed $\eps>0$, we define by recursion the sets $\Ga_{\eps,k,i}$, $i=0,\dots,k$, as follows.
We set $\Ga_{\eps,k,0}:=\Ga_0$; for $i\ge1$, $\Ga_{\eps,k,i}$ is a solution to the minimum problem 
\be \label{pb:increm-min-eps}
\min \left\{ \En(t_{k,i};\Ga) + \Haus(\Ga) + \frac{\eps}{2} \, \sum_{m=1}^{M} \frac{\Haus(\Ga^{m}\setdiff\Ga^{m}_{\eps,k,i-1})^2}{t_{k,i}-t_{k,i-1}} 
\colon
\Ga\in\adcr \,, \ \Ga\spq\Ga_{\eps,k,i-1}
\right\} \,,
\ee
where the role of the term multiplied by $\eps$ is to penalize transitions between energy wells.
The existence of solutions to \eqref{pb:increm-min-eps} is proven in Corollary \ref{cor:disc-exist} exploiting the compactness properties of $\adcr$ with respect to the Hausdorff convergence, see Section \ref{s.preliminaries} for details.
\par
We define a piecewise constant interpolation on $[0,T]$ by
\be \label{defin:piecewise}
\Ga_{\eps,k}(0):=\Ga_0 \,, \quad
\Ga_{\eps,k}(t):=\Ga_{\eps,k,i} \quad
\textrm{for} \ t\in(t_{k,i-1},t_{k,i}] \,.
\ee
The \emph{unilateral} constraint $\Ga\spq\Ga_{\eps,k,i-1}$ in \eqref{pb:increm-min-eps} enforces irreversibility of the crack growth, indeed the set function~$t\mapsto \Ga_{\eps,k}(t)$ is nondecreasing with respect to the inclusion.
\subsection{Existence results}
Passing to the limit as $k\to\infty$ and exploiting again the compactness of $\adcr$, we obtain a time-continuous evolution $t\mapsto\Ga_\eps(t)$. In order to understand its properties, we need to define the \emph{energy release rate} associated to a crack.
\par
%
%
%
%
%
For simplicity, let us first consider the case of a \emph{prescribed} curve with only one tip.
Given an increasing family of cracks $\Gamma_{\sigma} \in \mathcal{R}_{\eta}^{0,1}$
parametrized by their arc length~$\sigma\in[0,S]$, we will prove that the map $\sigma\mapsto \En(t;\Gamma_{\sigma})$ is differentiable for every fixed $t$.
Moreover, we will show that the  derivative only depends on the current  configuration~$\Gamma_{s}$, and not on its possible extensions, i.e.,
if $\Gamma_\sigma=\hat\Gamma_\sigma$ for $\sigma\le s$, then 
\[
\frac{\de \En(t;\Gamma_{\sigma})}{\de \sigma} \bigg|_{\sigma=s} =
\frac{\de \En(t;\hat\Gamma_{\sigma})}{\de \sigma} \bigg|_{\sigma=s}.
\]
In particular, we are allowed to write $-\frac{\de \En(t;\Gamma_{\sigma})}{\de \sigma} \bigg|_{\sigma=s}=:\G(t;\Gamma_s)$ with no ambiguity.
The quantity $\G(t;\Gamma_s)$ is the \emph{energy release rate} corresponding to the crack $\Gamma_s$ and
represents the (partial) derivative of the energy~$\En$ with respect to variations of crack in the set of all admissible curves~$\mathcal{R}^{0,1}_{\eta}$ larger than $\Ga_s$.
For the details of these results, we refer to Section \ref{s.ERR} below.
\par
In the case of a curve with several connected components $\Ga\in\adcr$,
for every tip indexed by $m$ we define the $m$-th energy release rate~$\mathcal{G}^{m}(t;\Gamma)$ as above, with respect to variations of the sole component~$\Gamma^{m}$ of~$\Gamma$. 
The energy release rate will be in this case a vector 
$\mathcal{G}(t;\Gamma) \coloneqq (\mathcal{G}^{1}(t;\Gamma), \ldots, \mathcal{G}^{M}(t;\Gamma))$.
\par
%
%
%
%
%
The properties of the evolution $t\mapsto \Gamma_\eps(t)$ are summarized in the next proposition, whose proof is postponed to Section \ref{sec:vve}.
\par
\begin{proposition} \label{prop:viscous}
Fix $\eta>0$, $\Ga_0\in\RR_\eta^0$, and $\eps>0$. Assume \textnormal{(H1)--(H4)}.
Let $\Ga_{\eps,k}$ be as in \eqref{defin:piecewise}.
Then there are a subsequence (not relabeled)
of~$\Ga_{\eps,k}$ and a set function $t\mapsto\Ga_\eps(t)\in\adcr$
such that~$\Ga_{\eps,k}(t)$ converges to~$\Ga_\eps(t)$ in the Hausdorff metric for every $t\in[0,T]$.
\par
Set $\Ga_\eps(t)=\bigcup_{m=1}^M \Ga_\eps^m(t)$, with the conventions of Definition \ref{defin:adcr2}, and 
$l_\eps^m(t):=\Haus(\Ga_\eps^m(t))$.
Then for every $m=1,\dots,M$, and for a.e.\ $t\in[0,T]$
\begin{enumerate}
\item[(G1)$_\eps$] $\dot l_\eps^m(t)\ge 0$;
\item[(G2)$_\eps$] $\kappa(P_\eps^m(t))-\G_\eps^m(t)+\eps \, \dot l_\eps^m(t)\ge0$;
\item[(G3)$_\eps$] $\dot l_\eps^m(t)\,[\kappa(P_\eps^m(t))-\G_\eps^m(t)+\eps \, \dot l_\eps^m(t)]=0$, 
\end{enumerate}
where $\G_\eps^m(t)$ is the energy release rate corresponding to $\Ga_\eps^m(t)$.
\par
 Moreover, along a suitable $\eps$-subsequence,~$\eps \|\dot{l}^{m}_{\eps}\|_{2}^{2}$ is bounded uniformly w.r.t.~$\eps$. 
\end{proposition}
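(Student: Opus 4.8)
The plan is to obtain Proposition~\ref{prop:viscous} by passing to the limit $k\to\infty$ in the discrete-time evolution defined by the incremental problems \eqref{pb:increm-min-eps}. First I would establish the basic a~priori estimates at the discrete level: testing the minimality of $\Ga_{\eps,k,i}$ against the competitor $\Ga_{\eps,k,i-1}$ gives a discrete energy inequality which, summed over $i$ and combined with the absolute continuity of the data (H1)--(H2), yields a uniform bound on $\En(t_{k,i};\Ga_{\eps,k,i})$, on the total lengths $\Haus(\Ga^m_{\eps,k,i})$, and crucially on the discrete viscous dissipation $\tfrac{\eps}{2}\sum_i \Haus(\Ga^m_{\eps,k,i}\setdiff\Ga^m_{\eps,k,i-1})^2/(t_{k,i}-t_{k,i-1})$. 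The length bound makes the piecewise-constant interpolants $\Ga_{\eps,k}(t)$ lie in the compact class $\adcr$, and a Helly-type selection argument (using monotonicity of $t\mapsto l^m_{\eps,k}(t)$ and compactness w.r.t.\ Hausdorff convergence) produces, along a subsequence, a limit set function $t\mapsto\Ga_\eps(t)\in\adcr$ with $\Ga_{\eps,k}(t)\to\Ga_\eps(t)$ for every $t$; the viscous bound upgrades the length functions $l^m_\eps$ to $H^1(0,T)$ regularity and gives the final claim $\eps\|\dot l^m_\eps\|_2^2$ bounded uniformly in $\eps$ (by lower semicontinuity of the dissipation as $k\to\infty$, then uniformity from the fact that the $k$-bound is $\eps$-independent once one tests against $\Ga_0$).

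Next I would derive the Karush--Kuhn--Tucker / Euler--Lagrange conditions for the incremental problems and pass them to the limit. Since in \eqref{pb:increm-min-eps} the only admissible variations are extensions of the single component $\Ga^m_{\eps,k,i-1}$ by arc length (with the other components frozen, which is consistent with property~(f) keeping the tips far apart), the minimality of $\Ga_{\eps,k,i}$ against a slightly shorter competitor along component $m$ gives the one-sided inequality
\[
\kappa(P^m_{\eps,k,i}) - \G^m(t_{k,i};\Ga_{\eps,k,i}) + \eps\,\frac{\Haus(\Ga^m_{\eps,k,i}\setdiff\Ga^m_{\eps,k,i-1})}{t_{k,i}-t_{k,i-1}} \ge 0 ,
\]
using Proposition~\ref{prop-err} (differentiability of $\En$ in arc length) and Theorem~\ref{t.ERR} (independence of the energy release rate on the extension, so that $\G^m$ is well defined at the configuration $\Ga_{\eps,k,i}$ regardless of which competitor realizes the variation). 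When $l^m$ strictly increases at step $i$ one also gets the reverse inequality (by varying in both directions), hence the complementarity $(\text{G3})_{k}$. These are exactly the discrete analogues of $(\text{G1})_\eps$--$(\text{G3})_\eps$, with the difference quotient $\Haus(\Ga^m_{\eps,k,i}\setdiff\Ga^m_{\eps,k,i-1})/(t_{k,i}-t_{k,i-1})$ playing the role of $\dot l^m_\eps(t)$.

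The core of the passage to the limit is then: (i) the difference quotients converge weakly in $L^2(0,T)$ to $\dot l^m_\eps$ — this follows from the $H^1$ bound together with the identification of the pointwise limit of $l^m_{\eps,k}$; (ii) $\G^m(t;\Ga_{\eps,k}(t))\to\G^m(t;\Ga_\eps(t))$, which is where I would invoke the continuity of the energy release rate with respect to Hausdorff convergence of cracks (Remark~\ref{r.ERR2}) together with continuity in $t$, plus the continuity of $\kappa$ and of the tips $P^m_{\eps,k}(t)\to P^m_\eps(t)$; (iii) $(\text{G1})_\eps$ is immediate from monotonicity of $l^m_\eps$; (iv) the stability inequality $(\text{G2})_\eps$ passes to the limit because it is of the form $\text{(nonneg.\ limit)}\ge 0$ after using that the negative part of $\kappa - \G^m_\eps$ is controlled by $\eps\,\dot l^m_\eps$ a.e.; (v) the complementarity $(\text{G3})_\eps$ requires a little more care: one shows $\int_0^T \dot l^m_\eps\,[\kappa - \G^m_\eps + \eps\dot l^m_\eps]\,\de t \le 0$ by a liminf argument on the discrete complementarity combined with the energy inequality, which together with $(\text{G1})_\eps$ and $(\text{G2})_\eps$ forces the integrand to vanish a.e.

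The main obstacle I expect is step (v) — specifically, upgrading the discrete complementarity to the a.e.\ pointwise identity $(\text{G3})_\eps$ in the presence of only weak $L^2$ convergence of the velocities. The difficulty is the usual one in vanishing-viscosity arguments: the product $\dot l^m_\eps\cdot\G^m_\eps$ is not obviously stable under weak convergence, so one cannot simply pass $(\text{G3})_k$ to the limit termwise. The remedy is to prove a limiting \emph{energy–dissipation balance}: the discrete energy inequality, taken with equality in the incremental minimization and summed, combines with lower semicontinuity of the total energy $t\mapsto\F(t;\Ga_\eps(t))$, lower semicontinuity (Ioffe-type) of the viscous dissipation term $\int\eps|\dot l^m_\eps|^2$, and absolute continuity of the power of the external loads, to yield that all these liminf's are actually limits and that the limiting balance holds as an equality. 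Testing that equality against the obvious inequalities $(\text{G1})_\eps$--$(\text{G2})_\eps$ then yields $\int_0^T\sum_m \dot l^m_\eps\,[\kappa(P^m_\eps)-\G^m_\eps+\eps\dot l^m_\eps]\,\de t = 0$, from which $(\text{G3})_\eps$ follows pointwise a.e.\ since each summand is nonnegative. A secondary technical point is ensuring that the freezing of the non-varying components in \eqref{pb:increm-min-eps} is legitimate for deriving the componentwise flow rule; this is guaranteed by property~(f), which forces the ball $\B_{2\eta}$ around any newly created crack point to avoid the other components, so that variations of different components are genuinely independent.
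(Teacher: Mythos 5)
Your proposal is correct and, for most of its length, follows the same route as the paper: discrete Griffith conditions $(\mathrm{G1})_k$--$(\mathrm{G3})_k$ from one-sided variations of a single component in \eqref{pb:increm-min-eps} (Proposition~\ref{prop:discrete-Griffith}), a telescoping energy estimate giving the uniform bound on the discrete viscous dissipation (Proposition~\ref{prop:bounds}, which also needs the $n$-uniform Korn inequality of Proposition~\ref{p.korn2} to control $\|u_{\eps,k}(t)\|_{H^1}$ on the right-hand side), Helly plus Theorem~\ref{thm:comp-adcr} for the limit set function, and continuity of $\G$ under Hausdorff convergence for the limit passage in $(\mathrm{G2})_\eps$.

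The one step where you genuinely diverge is $(\mathrm{G3})_\eps$. You treat the product $\dot l^m_{\eps,k}\,\G^m_{\eps,k}$ as unstable under weak convergence and route around it via an energy--dissipation balance (upper estimate from the incremental minimization, lower estimate from the chain rule for $t\mapsto\F(t;\Ga_\eps(t))$, then comparison with $(\mathrm{G2})_\eps$). This works, and it is the argument one is forced into when the driving force only converges weakly. The paper does something shorter: since $\G^m_{\eps,k}(t)\to\G^m_\eps(t)$ for \emph{every} $t$ (Remark~\ref{r.ERR2}) and $\G^m_{\eps,k}$ is uniformly bounded, the convergence is \emph{strong} in $L^2(0,T)$ (item (e) of Proposition~\ref{prop:conv-n}), likewise for $\kappa(P^m_{\eps,k})$; hence $\int_0^T\dot l^m_{\eps,k}\,[\kappa(P^m_{\eps,k})-\G^m_{\eps,k}]\,\de t$ passes to the limit as a weak--strong pairing, the quadratic term $\eps\int|\dot l^m_{\eps,k}|^2$ is weakly lower semicontinuous, and integrating $(\mathrm{G3})_k$ directly gives $\int_0^T\dot l^m_\eps[\kappa(P^m_\eps)-\G^m_\eps+\eps\dot l^m_\eps]\,\de t\le0$, which with $(\mathrm{G1})_\eps$--$(\mathrm{G2})_\eps$ forces the integrand to vanish a.e. Your route costs an extra lemma (the limiting balance, which the paper only records afterwards in Remark~\ref{r.energybalance} as a consequence of the chain rule) but is more robust; the paper's buys brevity from the strong convergence already established. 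Two small imprecisions to fix in a write-up: the discrete energy estimate is obtained by testing against $\Ga_{\eps,k,i-1}$ at each step and telescoping ($\Ga_0$ is not an admissible competitor in step $i$ because of the constraint $\Ga\supseteq\Ga_{\eps,k,i-1}$), and the discrete comparison yields only an upper inequality, not an equality, so the balance becomes an identity only after combining with the chain rule in the limit.
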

Properties (G1)$_\eps$--(G3)$_\eps$ show that the term multiplied by $\eps$ in \eqref{pb:increm-min-eps} has a regularizing effect, indeed the flow rule for the evolution of $l_\eps:=(l_\eps^1,\dots,l_\eps^M)$ features a time derivative of the unknown. For this reason the corresponding solutions are called \emph{viscous}.
\par
In the passage to the limit as $\eps\to0$, such viscous regularizing term vanishes, so the system follows an evolution of stable states. We thus obtain a \emph{balanced viscosity evolution.}
The next result is proven in Section~\ref{sec:vve}.
\par
\begin{theorem} \label{thm:bv}
Fix $\eta>0$ and $\Ga_0\in\RR_\eta^0$. Assume \textnormal{(H1)--(H4)}.
For every $\eps>0$, let $\Ga_{\eps}$ be the evolution found in Proposition \ref{prop:viscous}.
%
%
Then there are a subsequence (not relabeled)
of $\Ga_{\eps}$ and a set function $t\mapsto\Ga(t)\in\adcr$
such that $\Ga_{\eps}(t)$ converges to $\Ga(t)$ in the Hausdorff metric for every $t\in[0,T]$.
\par
Set $\Ga(t)=\bigcup_{m=1}^M \Ga^m(t)$, with the conventions of Definition \ref{defin:adcr2}, and 
$l^m(t):=\Haus(\Ga^m(t))$.
Then for every $m=1,\dots,M$ 
\begin{enumerate}
\item[(G1)] for a.e.\ $t\in[0,T]$, $\dot l^{m}(t)\ge 0$;
\item[(G2)] for every~$t\in[0,T]$ of continuity for~$l^{m}$, $\kappa(P^m(t))-\G^m(t)\ge0$;
\item[(G3)] for a.e.~$t\in[0,T]$, $\dot l^m(t)\,[\kappa(P^m(t))-\G^m(t)]=0$, 
\end{enumerate}
where $\G^m(t)$ is the energy release rate corresponding to $\Ga^m(t)$.
\end{theorem}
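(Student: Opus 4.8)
The plan is to obtain Theorem~\ref{thm:bv} as the vanishing-viscosity limit $\eps\to 0$ of the viscous evolutions from Proposition~\ref{prop:viscous}, using the usual machinery for balanced viscosity solutions (cf.\ \cite{MR2446401,MR2851705,MR3591293}). First I would establish the required compactness. Each $l_\eps^m$ is nondecreasing by (G1)$_\eps$ and uniformly bounded (by $\diam\Om$ or the total length available in $\adcr$), so by a Helly-type selection argument there is a subsequence along which $l_\eps^m(t)\to l^m(t)$ for every $t\in[0,T]$, with $l^m$ nondecreasing; this immediately gives (G1). The pointwise convergence of lengths, together with the fact that the crack components are extensions of $\Ga_0^m$ parametrized by arc length and have curvature bounded by $1/\eta$, upgrades to Hausdorff convergence $\Ga_\eps^m(t)\to\Ga^m(t)$ for every $t$, and the limit set $\Ga(t)=\bigcup_m\Ga^m(t)$ belongs to $\adcr$ by the sequential compactness of the class stated in Section~\ref{s.preliminaries}. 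I would also record that $P_\eps^m(t)\to P^m(t)$ (the tip positions converge) and, by continuity of $\kappa$ (H4), that $\kappa(P_\eps^m(t))\to\kappa(P^m(t))$.

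Next I would pass the viscous Griffith conditions (G2)$_\eps$--(G3)$_\eps$ to the limit. The key analytic input is the stability/continuity of the energy release rate: from Theorem~\ref{t.ERR} and Remark~\ref{r.ERR2}, $\G^m$ is continuous with respect to Hausdorff convergence of cracks (and, uniformly, in $t$ since the data $w,f,g$ are absolutely continuous in time), so $\G_\eps^m(t)\to\G^m(t)$ at every $t$ where $\Ga_\eps^m(t)\to\Ga^m(t)$, i.e.\ everywhere. For (G2): at any continuity point $t$ of $l^m$, a standard argument shows that one may choose times $t_\eps\to t$ with $\eps\,\dot l_\eps^m(t_\eps)\to 0$ — this uses the uniform bound $\eps\|\dot l_\eps^m\|_2^2\le C$ from the last assertion of Proposition~\ref{prop:viscous}, which forces the set where $\eps\dot l_\eps^m$ is large to be small, combined with equicontinuity of $l^m$ near $t$; passing to the limit in $\kappa(P_\eps^m(t_\eps))-\G_\eps^m(t_\eps)+\eps\dot l_\eps^m(t_\eps)\ge 0$ yields $\kappa(P^m(t))-\G^m(t)\ge 0$. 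For (G3): at a.e.\ $t$ where $\dot l^m(t)>0$, the monotone convergence $l_\eps^m\to l^m$ plus the energy identity/energy inequality (obtained by passing to the limit in the discrete energy balance behind Proposition~\ref{prop:viscous}) forces the dissipated length to be genuinely ``activated'', which combined with (G2) gives $\dot l^m(t)[\kappa(P^m(t))-\G^m(t)]=0$; alternatively one derives (G3) from an energy-balance argument: the total energy inequality $\mathcal F(t;\Ga(t))+\int$(dissipation)$\le \mathcal F(0;\Ga_0)+$(work of external loads) together with the stability (G2) forces equality in the complementarity relation for a.e.\ $t$.

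The core of the work — and the step I expect to be the main obstacle — is making the energy balance pass to the limit correctly as $\eps\to 0$, because this is what links (G2) and (G3) and turns mere inequalities into the sharp flow rule. The viscous evolutions satisfy an energy identity of the form $\En(t;\Ga_\eps(t))+\sum_m\int_0^t\kappa(P_\eps^m)\dot l_\eps^m\,\de s+\eps\sum_m\int_0^t(\dot l_\eps^m)^2\,\de s = \En(0;\Ga_0)+\int_0^t\partial_s\En(s;\Ga_\eps(s))\,\de s$ (the last term being the power of the time-dependent loading), and one must show: (i) the elastic energies converge, $\En(t;\Ga_\eps(t))\to\En(t;\Ga(t))$, which follows from Hausdorff convergence of cracks and continuity of $(t,\Ga)\mapsto\En(t;\Ga)$ established in Section~\ref{s.preliminaries}; (ii) the dissipation terms pass to the limit by lower semicontinuity, $\liminf_\eps\sum_m\int_0^t\kappa(P_\eps^m)\dot l_\eps^m\,\de s\ge \sum_m\int_0^t\kappa(P^m)\dot l^m\,\de s$, using that $l_\eps^m$ are nondecreasing and $l_\eps^m\to l^m$ pointwise (so $\de l_\eps^m\rightharpoonup \de l^m$ weakly-$*$ as measures) and uniform convergence of $\kappa(P_\eps^m)$; (iii) the viscous term is nonnegative and in fact $\to 0$ along the chosen subsequence; and (iv) the loading term converges, which again reduces to Hausdorff continuity of $\En$ and of the optimal displacements $u(\cdot;\Ga_\eps(\cdot))$, plus $w,f,g\in AC$. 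Combining the resulting energy inequality with the opposite inequality coming from the stability (G2) (tested against the actual evolution, via a Riemann-sum / chain-rule argument along $l^m$) yields the energy \emph{balance} for the limit, from which (G3) follows. The delicate points are the lower semicontinuity of the dissipation under only pointwise-in-$t$ convergence of lengths (handled by monotonicity) and the justification that the limit tip never comes $2\eta$-close to $\partial\Om$ or to other components — i.e.\ that $\Ga(t)$ stays in $\adcr$ and the energy release rate remains the continuous object from Theorem~\ref{t.ERR} throughout — which is exactly where the geometric constraints (c)--(f) of Definitions~\ref{defin:adcr}--\ref{defin:adcr2} and the compactness of $\adcr$ are used.
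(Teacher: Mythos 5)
Your compactness step and (G1) coincide with the paper's: monotonicity plus the uniform length bound give Helly-type pointwise convergence of $l_\eps^m$ and Hausdorff convergence of $\Ga_\eps^m(t)$ for every $t$, with $\Ga(t)\in\adcr$ by Theorem~\ref{thm:comp-adcr}. For (G2) your route (selecting times $t_\eps\to t$ with $\eps\,\dot l_\eps^m(t_\eps)\to0$) is workable but heavier than needed: the paper simply tests (G2)$_\eps$ against an arbitrary $\psi\in L^2(0,T)$, $\psi\ge0$, and passes to the limit using that $\kappa(P_\eps^m)\to\kappa(P^m)$ and $\G_\eps^m\to\G^m$ in $L^2(0,T)$ and that $\eps\dot l_\eps^m\to0$ in $L^2(0,T)$ — the last fact is immediate from the bound $\eps\|\dot l_\eps^m\|_2^2\le C$, since $\|\eps\dot l_\eps^m\|_2^2\le C\eps$. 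This yields $\kappa(P^m)-\G^m\ge0$ a.e., and the statement at continuity points follows by continuity of $\G^m$ and $\kappa\circ P^m$ along the (Hausdorff-continuous) evolution. If you keep your pointwise selection, you still owe the argument that $\G_\eps^m(t_\eps)\to\G^m(t)$, i.e.\ that $\Ga_\eps^m(t_\eps)\to\Ga^m(t)$ in the Hausdorff metric, which uses equicontinuity of the monotone lengths near a continuity point.

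The real divergence is (G3), and here your proposal contains a misstep. You present the two-sided energy estimate as the core of the proof and claim that the upper energy inequality combined with the stability (G2) ``yields the energy \emph{balance} for the limit, from which (G3) follows.'' For balanced viscosity solutions this is not true in real time: at a jump of $l^m$ the transition passes through states with $\G^m\ge\kappa$, an extra amount of energy is dissipated, and the limit evolution satisfies only an energy \emph{inequality} in the variable $t$; the balance is recovered only after reparametrization, which is precisely the content of \eqref{e:vbalance2} in Theorem~\ref{thm:param} (note the term $-\sum_m\int(\widetilde\G^m-\kappa(\widetilde P^m))(\tilde l^m)'\,\di\sigma$, which is strictly negative on the jump arcs). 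A chain-rule/Riemann-sum lower bound based on (G2) controls only the absolutely continuous part of the evolution and does not close the balance across jumps, so your argument as stated has a gap. The paper avoids all of this with a three-line argument: integrate (G3)$_\eps$ over $[0,T]$, discard the nonnegative term $\eps(\dot l_\eps^m)^2$ to obtain $\int_0^T\dot l_\eps^m(\kappa(P_\eps^m)-\G_\eps^m)\,\di t\le0$, pass to the limit to get $\int_0^T\dot l^m(\kappa(P^m)-\G^m)\,\di t\le0$, and conclude since the integrand is nonnegative a.e.\ by (G1) and (G2). No energy balance is needed for Theorem~\ref{thm:bv}; it enters only in Remark~\ref{r.energybalance} and in the parametrized Theorem~\ref{thm:param}. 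I recommend replacing your (G3) argument with this direct one.
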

\def \tilt {\tilde{t}}
\def \till {\tilde{l}}
\def \tilga {\widetilde{\Ga}}
\def \tilg {\widetilde{G}}
\def \tilp {\widetilde{P}}
Properties (G1)--(G3) are a formulation of Griffith's criterion for crack growth and show the stability of the evolution $t\mapsto l(t):=(l^1(t),\dots,l^M(t))$  in its continuity points. However, the function $t\mapsto l(t)$ may have discontinuities
and Theorem \ref{thm:bv} does not provide a characterization of jumps in time. The existence result is refined in the following theorem, where we show that
there are a reparametrization of the time interval and a \emph{parametrized evolution}, continuous in time, that interpolates $l$ and follows a viscous flow rule in the intervals corresponding to the discontinuities of $l$.
The next theorem is proven in Section \ref{s:parametric}.
\par
\begin{theorem}[Griffith's criterion] \label{thm:param} 
Fix $\eta>0$ and $\Ga_0\in\RR_\eta^0$. Assume \textnormal{(H1)--(H4)}.
There are absolutely continuous functions $\tilt \colon [0,S]\to [0,T]$ and $\tilga^{m} \colon [0,S] \to \adcr$, $m\in\{1,\ldots, M\}$, such that for a.e.\ $\sigma\in[0,S]$,
setting $\tilga(\sigma)=\bigcup_{m=1}^M \tilga^m(\sigma)$, with the conventions of Definition \ref{defin:adcr2}, and 
$\till^m(\sigma):=\Haus(\tilga^m(\sigma))$,
\begin{itemize}
\item[(pG1)] $\tilt'(\sigma)\ge0$ and $(\till^m)'(\sigma)\ge0$ for every $m=1,\dots,M$;
\item[(pG2)] if $\tilt'(\sigma)>0$, then $\widetilde{\G}^m(\sigma)\le\kappa(\tilp^m(\sigma))$ for every $m=1,\dots,M$;
\item[(pG3)] if $\tilt'(\sigma)>0$ and $(\till^m)'(\sigma)>0$ for some $m\in\{1,\dots,M\}$, then $\widetilde{\G}^m(\sigma)=\kappa(\tilp^m(\sigma))$;
\item[(pG4)] if $\tilt'(\sigma)=0$, then there is $m\in\{1,\dots,M\}$ such that $(\till^m)'(\sigma)>0$;
moreover, for every $m$ with this property, we have $\widetilde{\G}^m(\sigma)\ge\kappa(\tilp^m(\sigma))$,
\end{itemize}
where $\widetilde{\G}^m(\sigma)$ is the energy release rate corresponding to  $\tilga^m(\sigma)$. Moreover, denoting with~$\tilde{u}(\sigma)$ the solution of~\eqref{pb:min} at time~$\tilde{t}(\sigma)$ with a crack~$\widetilde{\Gamma}(\sigma)$, for every $s\in[0,S]$ it holds
\begin{equation}\label{e:vbalance2}
\begin{split}
\F(\tilde{t}(s);\widetilde{\Gamma}(s)) =\ &  \F(0;\Gamma_{0})  + \int_{0}^{s}\int_{\Om}\C\e \tilde u (\sigma) : \e\dot{w}(\tilde{t}(\sigma)) \,\tilde{t}'(\sigma)\,\di x \,\di \sigma \\
& - \sum_{m=1}^M \int_{0}^{s} (\widetilde{\G}^{m} (\sigma) - \kappa(\widetilde P^{m}(\sigma))) (\tilde{l}^{m})'(\sigma)\,\di\sigma \\
& - \int_{0}^{s}\int_{\Om}\dot{f}(\tilde{t}(\sigma))\cdot \tilde u(\sigma) \,\tilde{t}'(\sigma)\,\di x\, \di\sigma - \int_{0}^{s}\int_{\Om} f(\tilde{t}(\sigma))\cdot \dot{w}(\tilde{t}(\sigma)) \,\tilde{t}'(\sigma)\,\di x \,\di \sigma \\
& - \int_{0}^{s} \int_{\partial_{S}\Om} \dot g(\tilde{t}(\sigma))\cdot \tilde u(\sigma) \,\tilde{t}'(\sigma)\,\di \Haus \,\di \sigma - \int_{0}^{s}\int_{\partial_{S}\Om} g(\tilde{t}(\sigma))\cdot \dot{w}(\tilde{t}(\sigma))  \,\tilde{t}'(\sigma) \, \di \Haus \,\di \sigma \,.
\end{split}
\end{equation}

Finally,
\[
\text{if}\ \tilt'(\sigma)>0 \,, \quad \text{then}\ \tilga(\sigma)=\Ga(\tilt(\sigma)) \,,
\]
where $\Ga$ is the balanced viscosity evolution found in Theorem \ref{thm:bv}.
\end{theorem}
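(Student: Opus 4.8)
\textbf{Proof proposal for Theorem \ref{thm:param}.}

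The plan is to obtain the parametrized evolution by a standard arc-length reparametrization of the viscous solutions $\Ga_\eps$ from Proposition \ref{prop:viscous}, followed by a passage to the limit as $\eps\to0$, and then to identify the limit with the evolution $\Ga$ of Theorem \ref{thm:bv} on the set where time is strictly increasing. First I would introduce, for each $\eps$, the ``energy-dissipation arc length''
\[
s_\eps(t):= t + \sum_{m=1}^M \int_0^t \big( \dot l_\eps^m(\tau) + \sqrt{\eps}\, |\dot l_\eps^m(\tau)| \big)\, \de\tau \,,
\]
or a similar monotone surrogate incorporating the viscous dissipation; the bound $\eps\|\dot l_\eps^m\|_2^2\le C$ from Proposition \ref{prop:viscous} guarantees that $s_\eps(T)$ is bounded uniformly in $\eps$, so after rescaling we may assume $s_\eps\colon[0,T]\to[0,S]$ with $S$ independent of $\eps$. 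I would then define $\tilt_\eps:=t\circ s_\eps^{-1}$ and $\tilga_\eps^m:=\Ga_\eps^m\circ s_\eps^{-1}$ (using a left-continuous inverse, filling jumps by the constant-$\tilt$ transition curves); by construction these are $1$-Lipschitz in $\sigma$ in the appropriate sense, with $\tilt_\eps'(\sigma)+\sum_m(\till_\eps^m)'(\sigma)\le 1$ and all derivatives nonnegative, which is (pG1) at the $\eps$-level.

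Next I would pass to the limit $\eps\to0$. By Ascoli--Arzel\`a, $\tilt_\eps\to\tilt$ uniformly and, using the Hausdorff compactness of $\adcr$ together with the equi-Lipschitz bound on the lengths $\till_\eps^m$, $\tilga_\eps^m(\sigma)\to\tilga^m(\sigma)$ in the Hausdorff metric for every $\sigma$, with $\till_\eps^m\to\till^m$ uniformly and $\tilt_\eps',(\till_\eps^m)'$ converging weakly-$*$ in $L^\infty$. The continuity of the energy release rate under Hausdorff convergence (Remark \ref{r.ERR2}) gives $\widetilde\G_\eps^m(\sigma)\to\widetilde\G^m(\sigma)$ along a further subsequence for a.e.\ $\sigma$. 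I would then translate the viscous flow rule (G1)$_\eps$--(G3)$_\eps$, written in the $\sigma$ variable as
\[
(\till_\eps^m)'(\sigma)\,\big[\kappa(\tilp_\eps^m(\sigma)) - \widetilde\G_\eps^m(\sigma) + \eps\,\dot l_\eps^m(\tilt_\eps(\sigma))\,\big]=0 \,,
\]
into the limit: on $\{\tilt'(\sigma)>0\}$ the viscous correction $\eps\dot l_\eps^m\to0$ (here one uses $\eps\dot l_\eps^m = \sqrt\eps\cdot\sqrt\eps\,\dot l_\eps^m$ and the $L^2$ bound, together with $\tilt_\eps'>0$ forcing $(\till_\eps^m)'$ to stay comparable to the ``physical'' rate), yielding the stability (pG2) and the activation condition (pG3); on $\{\tilt'(\sigma)=0\}$ the constraint $\tilt_\eps'+\sum_m(\till_\eps^m)'\le1$ passes to the limit to force some $(\till^m)'(\sigma)>0$, and the reversed inequality $\widetilde\G^m\ge\kappa(\tilp^m)$ for active $m$ follows from (G2)$_\eps$ by testing against the transition, giving (pG4). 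The energy balance \eqref{e:vbalance2} I would obtain by writing the discrete energy balance for \eqref{pb:increm-min-eps}, passing first to $k\to\infty$ and then reparametrizing and passing to $\eps\to0$, using the absolute continuity of the loadings (H1)--(H2) and the chain rule for $\sigma\mapsto\F(\tilt(\sigma);\tilga(\sigma))$, where Proposition \ref{prop-err} supplies the surface-integral form of $\de\En/\de\sigma$ so that the crack-contribution collapses to the term $-\sum_m\int(\widetilde\G^m-\kappa(\tilp^m))(\till^m)'$.

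Finally, for the last assertion I would argue that on any interval where $\tilt'>0$ the map $\tilt$ is a strictly increasing absolutely continuous bijection onto its image, hence invertible with absolutely continuous inverse; setting $t=\tilt(\sigma)$, the reparametrized curve $\sigma\mapsto\tilga(\sigma)$ pushes forward to a set function of $t$ that satisfies exactly the flow rule (G1)--(G3) of Theorem \ref{thm:bv} and the same energy balance, and therefore — since the construction of $\Ga$ in Theorem \ref{thm:bv} is obtained as the (subsequential) Hausdorff limit of the very same $\Ga_\eps$ reparametrized back — it must coincide with $\Ga(\tilt(\sigma))$; care is needed only to choose all subsequences consistently, which one arranges by first extracting the subsequence of Theorem \ref{thm:bv} and then reparametrizing along it. The main obstacle I expect is the passage to the limit in the viscous term on $\{\tilt'>0\}$: one must show that $\eps\,\dot l_\eps^m(\tilt_\eps(\sigma))(\till_\eps^m)'(\sigma)\to0$ in a pointwise-a.e.\ or suitably weak sense, which requires controlling the interplay between the (possibly wild) original time-derivatives $\dot l_\eps^m$ and the reparametrization, and is exactly the step where the uniform bound $\eps\|\dot l_\eps^m\|_2^2\le C$ and the careful choice of arc length are essential; a secondary technical point is ensuring that the limiting $\tilga^m(\sigma)$ still lies in $\adcr$ (in particular properties (d)--(f)), which follows from Hausdorff closedness of $\adcr$ but must be checked not to degenerate at transition intervals.
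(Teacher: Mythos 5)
Your overall strategy is the paper's: reparametrize the viscous evolutions by an arc-length-plus-time variable, pass to the limit as $\eps\to0$, and identify the limit with $\Ga$ on $\{\tilt'>0\}$. There are, however, two genuine gaps. First, you derive the initial claim of (pG4) from the constraint $\tilt_\eps'+\sum_m(\till_\eps^m)'\le 1$, but an upper bound cannot force some $(\till^m)'(\sigma)$ to be positive where $\tilt'(\sigma)=0$: you need the \emph{equality} $\tilt'(\sigma)+\sum_{m}(\till^m)'(\sigma)=1$ in the limit. With the paper's choice $\sigma_\eps(t)=t+\sum_m(l_\eps^m(t)-l_0^m)$ one has exactly $\tilt_\eps'+|\till_\eps'|_1=1$ a.e., and this identity survives the limit in integrated form; your extra $\sqrt\eps\,|\dot l_\eps^m|$ term and the subsequent ``rescaling'' destroy this normalization unless you verify the defect is $o(1)$. (Also, no ``jump filling'' is needed at the $\eps$-level: $\dot\sigma_\eps\ge1$, so $\sigma_\eps$ is a bi-Lipschitz homeomorphism; jumps only appear in the limit.)

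Second, the limit passage in the activation condition (pG3) is glossed over. In the $\sigma$-variable the flow rule reads $(\till_\eps^m)'\big[\kappa(\tilp_\eps^m)\,\tilt_\eps'-\widetilde{\G}_\eps^m\,\tilt_\eps'+\eps(\till_\eps^m)'\big]=0$, and it contains the product of the two weak-$*$ convergent factors $(\till_\eps^m)'$ and $\tilt_\eps'$, which does not converge to the product of the limits; the integrated argument that works for (G3) in Theorem \ref{thm:bv} therefore fails here. The paper circumvents this pointwise: granting (pG1), (pG2) and the normalization, (pG3)--(pG4) are equivalent to the statement that $\till^m$ is locally constant around any $\bar\sigma$ with $\widetilde{\G}^m(\bar\sigma)<\kappa(\tilp^m(\bar\sigma))$; by continuity of the energy release rate under Hausdorff convergence (proved by contradiction) the strict inequality persists for $\widetilde{\G}^m_\eps$ on a fixed neighborhood of $\bar\sigma$ for all small $\eps$, whence the viscous flow rule forces $(\till_\eps^m)'=0$ there, and local constancy passes to the weak-$*$ limit. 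You need this (or an equivalent) argument. Finally, the ``main obstacle'' you flag has a clean resolution you should make explicit: multiply the stability and flow-rule brackets by $\tilt_\eps'$ so that the viscous term becomes $\eps(\till_\eps^m)'$, and use the change of variables
\begin{equation*}
\eps\int_0^{\sigma_\eps(T)}|(\till_\eps^m)'(\sigma)|^2\,\di\sigma=\eps\int_0^{\sigma_\eps(T)}|\dot l_\eps^m(\tilt_\eps(\sigma))|^2(\tilt_\eps'(\sigma))^2\,\di\sigma\le\eps\int_0^T|\dot l_\eps^m(t)|^2\,\di t\le C
\end{equation*}
(valid since $0<\tilt_\eps'\le1$), which gives $\eps(\till_\eps^m)'\to0$ in $L^2(0,\bar S)$ on the whole interval, with no case distinction between $\{\tilt'>0\}$ and $\{\tilt'=0\}$ and no analysis of the ``interplay'' with $\dot l_\eps^m$.
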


\def \adcr {\RR_\eta}
\def \setdiff {\setminus}
\def \diam {\mathrm{diam}}

\section{Preliminary results}\label{s.preliminaries}
In this section we collect some properties of the class of admissible cracks $\RR_\eta$ and of the associated displacements. We recall that, given a crack, the associated displacement is the unique solution to the corresponding minimum problem \eqref{pb:min}.

As already mentioned in the previous section, the elements of $\RR_\eta$ have no self-intersections, and, during the evolution, their crack tips stay uniformly far from the boundary and from the other connected components of the crack set. Moreover, it is easy to show that the curvature and the $\mathcal H^1$ measure of the elements of~$\RR_\eta$ are controlled from above by $\eta^{-1}$ and by some constant $C(\Omega,\Gamma_0,\eta)$, respectively. Finally, as proven in~\cite[Proposition~2.9 and Remark~2.10]{MR2802893}, the class of admissible cracks $\adcr$ is sequentially compact with respect to the Hausdorff convergence introduced in Definition \ref{def-dh}. 
\begin{theorem} \label{thm:comp-adcr}
Every sequence $(\Ga_n)_{n\in \mathbb N}\subset\adcr$ admits (up to a subsequence) a limit $\Ga_\infty\in\adcr$ in the Hausdorff metric.
Moreover, along the subsequence (not relabeled), we have $\Haus(\Ga_n)\to\Haus(\Ga)$ as $n\to \infty$.
\end{theorem}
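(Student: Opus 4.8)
The plan is to extract a Hausdorff‑convergent subsequence by a compactness argument on the arc‑length parametrizations of the components, and then to check that the limit still lies in $\adcr$; this is essentially \cite[Proposition~2.9 and Remark~2.10]{MR2802893}, and I sketch the argument.

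\emph{Setup and compactness.} Given $(\Ga_n)\subset\adcr$, write $\Ga_n=\bigcup_{m=1}^M\Ga_n^m$ as in Definition~\ref{defin:adcr2}, with arc‑length parametrizations $\ga_n^m\colon[0,\ell_n^m]\to\Ga_n^m$, $\ell_n^m:=\Haus(\Ga_n^m)$. By (c) each $\Ga_n^m$ has curvature bounded by $1/\eta$, so $|\ddot\ga_n^m|\le1/\eta$ a.e.; the lengths are uniformly bounded, $\ell_n^m\le L:=C(\Om,\Ga_0,\eta)$, and by (e) $\ell_n^m\ge\Haus(\Ga_0^m)=:\ell_0^m>0$. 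Rescaling to $[0,1]$ via $\bar\ga_n^m(t):=\ga_n^m(\ell_n^m t)$ gives $|\dot{\bar\ga}_n^m|\equiv\ell_n^m\le L$ and $|\ddot{\bar\ga}_n^m|=(\ell_n^m)^2|\ddot\ga_n^m|\le L^2/\eta$, so $(\bar\ga_n^m)$ is bounded in $W^{2,\infty}((0,1);\R^2)$. Up to a subsequence, $\ell_n^m\to\ell_\infty^m\in[\ell_0^m,L]$ and, by Arzel\`a--Ascoli, $\bar\ga_n^m\to\bar\ga_\infty^m$ in $C^1([0,1];\R^2)$ with $\bar\ga_\infty^m\in W^{2,\infty}\subset C^{1,1}$; since $|\dot{\bar\ga}_\infty^m|\equiv\ell_\infty^m>0$, $\bar\ga_\infty^m$ is a regular constant‑speed $C^{1,1}$ curve, so $\Ga_\infty^m:=\bar\ga_\infty^m([0,1])$ is a $C^{1,1}$ arc; set $\Ga_\infty:=\bigcup_m\Ga_\infty^m$. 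As $d_H(\Ga_n^m,\Ga_\infty^m)\le\|\bar\ga_n^m-\bar\ga_\infty^m\|_{C^0}\to0$ and Hausdorff convergence is stable under finite unions, $\Ga_n\to\Ga_\infty$ in the Hausdorff metric.

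\emph{The limit is admissible.} Since the extension of $\Ga_0$ occurs only at the tips, $\bar\ga_n^m$ coincides with $\ga_0^m(\ell_n^m\cdot)$ on $[0,\ell_0^m/\ell_n^m]$; letting $n\to\infty$ yields $\bar\ga_\infty^m(t)=\ga_0^m(\ell_\infty^m t)$ on $[0,\ell_0^m/\ell_\infty^m]$, hence $\Ga_\infty^m\spq\Ga_0^m$ — this is (e) — and every point of $\Ga_\infty^m\setdiff\Ga_0^m$ has the form $\bar\ga_\infty^m(t)$ with $t>\ell_0^m/\ell_\infty^m$, so $\bar\ga_n^m(t)\in\Ga_n^m\setdiff\Ga_0^m$ for $n$ large. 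Conditions (d) and (f) then pass to the limit: if $x\in\Ga_\infty^m\cap\Ga_\infty^{m'}$ with, say, $x\notin\Ga_0^m$, then $x=\lim_n\bar\ga_n^m(t)$ with $\bar\ga_n^m(t)\in\Ga_n^m\setdiff\Ga_0^m$, so $\dist(\bar\ga_n^m(t),\bigcup_{l\ne m}\Ga_n^l)\ge2\eta$ by (f), contradicting $x\in\Ga_\infty^{m'}=\lim_n\Ga_n^{m'}$; the same computation, using $\bigcup_{l\ne m}\Ga_n^l\to\bigcup_{l\ne m}\Ga_\infty^l$ and that $\Ga_n^m\setdiff\Ga_0^m$ stays $2\eta$‑far from $\partial\Om$, gives (f) and the ``at most one endpoint on $\partial\Om$'' part of (a). For the double‑ball condition (c): given $x\in\Ga_\infty$, pick $x_n\in\Ga_n$ with $x_n\to x$ and balls $\B_\eta(c_n^i)$, $i=1,2$, as in \eqref{doppia-palla}; since $|c_n^1-c_n^2|=2\eta$ and the $c_n^i$ are bounded, up to a subsequence $c_n^i\to c_\infty^i$ with $|c_\infty^1-c_\infty^2|=2\eta$, so $\overline{\B}_\eta(c_\infty^1)\cap\overline{\B}_\eta(c_\infty^2)$ is the single point $(c_\infty^1+c_\infty^2)/2=\lim_n x_n=x$, and $\B_\eta(c_\infty^i)\cap\Ga_\infty=\emptyset$ because any point of this open ball lying in $\Ga_\infty$ would be approximated by points of $\Ga_n$ eventually inside $\B_\eta(c_n^i)$. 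In particular (c) forces each $\Ga_\infty^m$ to be simple (as noted after Definition~\ref{defin:adcr}), so $\bar\ga_\infty^m$ is injective.

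\emph{Topology of the complement and $\Haus$‑convergence.} The more delicate point is (b): $\Om\setdiff\Ga_\infty$ must be a connected open set equal to a finite union of Lipschitz domains. Connectedness follows since $\Om\setdiff\Ga_n$ is connected for every $n$ and $\Ga_n\to\Ga_\infty$ with uniformly bounded length, so no new separating arc can appear in the limit; equivalently, removing from the connected set $\Om\setdiff\Ga_0$ the finitely many arcs $\Ga_\infty^m\setdiff\Ga_0^m$, each attached to the rest of $\Ga_\infty$ in at most one point and meeting $\partial\Om$ in at most one point, cannot disconnect it. The local $C^{1,1}$ regularity of the arcs, together with the double‑ball condition (forbidding tangencies and self‑contacts), shows that away from the finitely many tips and junction points $\partial(\Om\setdiff\Ga_\infty)$ is a $C^{1,1}$ graph, and near tips and junctions it is Lipschitz; hence $\Om\setdiff\Ga_\infty$ is a finite union of Lipschitz domains, and $\Haus(\Ga_\infty\cap\Om)\ge\Haus(\Ga_0\cap\Om)>0$. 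This bookkeeping of the complement's topology is the step I expect to be the main obstacle; the rest is routine passage to the limit. Finally, since pairwise intersections $\Ga_n^m\cap\Ga_n^{m'}$ are at most single points, $\Haus(\Ga_n)=\sum_{m=1}^M\ell_n^m$, and likewise $\Haus(\Ga_\infty)=\sum_{m=1}^M\Haus(\Ga_\infty^m)=\sum_{m=1}^M\ell_\infty^m$ (using injectivity of $\bar\ga_\infty^m$ and the limiting form of (d)); since $\ell_n^m\to\ell_\infty^m$ we conclude $\Haus(\Ga_n)\to\Haus(\Ga_\infty)$, which completes the proof.
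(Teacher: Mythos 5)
First, a point of comparison: the paper does not actually prove Theorem \ref{thm:comp-adcr} --- it is quoted from \cite[Proposition~2.9 and Remark~2.10]{MR2802893} --- so there is no internal proof to measure your argument against. Your route, compactness of the rescaled constant-speed parametrizations in $W^{2,\infty}$ followed by a case-by-case verification that the limit satisfies (a)--(f), is the natural (and almost certainly the intended) one, and most of it is carried out correctly: the curvature and length bounds, the $C^1$ convergence of $\bar\ga_n^m$, the passage to the limit of (c), (e), (f) and of the ``at most one boundary point'' part of (a), and the length convergence $\Haus(\Ga_n)=\sum_m\ell_n^m\to\sum_m\ell_\infty^m=\Haus(\Ga_\infty)$, which correctly uses the injectivity of the limit parametrization (so that $\Haus(\Ga_\infty^m)$ really equals $\ell_\infty^m$ rather than merely being bounded above by it) and the $\Haus$-nullity of the pairwise intersections.

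The one step that is not sound as written is the opening of your treatment of (b). ``$\Om\setdiff\Ga_n$ is connected and $\Ga_n\to\Ga_\infty$, so no new separating arc can appear in the limit'' is false as a general principle: Hausdorff convergence does not preserve connectedness of complements (a circle with a gap of length $1/n$ removed has connected complement in the disk, but its Hausdorff limit does not). What rescues you is exactly the structural argument you then offer as an ``equivalently'': $\Ga_\infty$ is $\Ga_0$ with a simple $C^{1,1}$ arc attached at each tip, and by the limit form of (f) every point of these new arcs is at distance at least $2\eta$ from $\partial\Om$ and from the other components, so each new arc has a free endpoint, is neither a loop nor a cross-cut, and cannot disconnect the connected set $\Om\setdiff\Ga_0$. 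That argument should be the proof, and the first sentence should be deleted. The remaining claim of (b) --- that $\Om\setdiff\Ga_\infty$ is a finite union of Lipschitz domains --- is asserted rather than proved; it does follow from the uniform double-ball condition (which excludes cusps and tangential contacts and yields uniform two-sided cone conditions along each arc away from the finitely many tips and junctions), but this bookkeeping, which you yourself flag as the main obstacle, is precisely the content for which the paper defers to \cite{MR2802893}.
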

%
%
%

In what follows we show the continuity of the elastic energy~$\En$ w.r.t.~Hausdorff convergence of the crack set~$\Gamma\in\mathcal{R}_{\eta}$. This will in particular imply the existence of solutions for the incremental minimum problems~\eqref{pb:increm-min-eps}.

 We start with recalling in Proposition~\ref{p.korn1} a Korn inequality for~$\Om\setminus\Gamma$. In Proposition~\ref{p.korn2}, instead, we show that, along sequences of cracks 
$\Gamma_{n}\in\mathcal{R}_{\eta}$ converging in the Hausdorff metric, such an inequality is independent of~$n$. The study is carried out disregarding the time variable, which for brevity is omitted. Accordingly, the elastic energy associated to a fracture $\Gamma$ writes $\mathcal E(\Gamma)$. Furthermore, when explicitly needed, we highlight the dependence on the data by writing $\En( f,g,w, \C ;\Gamma)$ for~$\En(\Gamma)$. 
\begin{proposition}\label{p.korn1}
Let $\Gamma\in\RR_{\eta}$. Then, there exists a positive constant $C=C(\Om,\Gamma)$ such that for every $u\in H^{1}(\Om\setminus\Gamma;\R^{2})$
\[
\|\nabla{u}\|_{2} \leq C (\|u\|_{2} + \|\e u\|_{2})\,.
\]
\end{proposition}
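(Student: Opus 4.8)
The plan is to prove Korn's (second) inequality on the cracked domain $\Om\setminus\Gamma$ by reducing it to the classical Korn inequality on a finite collection of Lipschitz domains. Recall from property (b) in Definition~\ref{defin:adcr} that $\Om\setminus\Gamma$ is a connected open set that can be written as a finite union of Lipschitz domains $U_1,\dots,U_N$. On each $U_j$ the classical second Korn inequality (see e.g.\ any standard reference on linear elasticity) gives a constant $C_j=C(U_j)$ with $\|\nabla u\|_{2,U_j}\le C_j(\|u\|_{2,U_j}+\|\e u\|_{2,U_j})$ for all $u\in H^1(U_j;\R^2)$, and since the pieces overlap only on a set of measure zero, summing these yields $\|\nabla u\|_{2,\Om\setminus\Gamma}\le C(\|u\|_2+\|\e u\|_2)$ with $C=\max_j\sqrt{N}\,C_j$. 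This already proves the statement, so in fact the bulk of the work is purely a matter of invoking the classical result on each Lipschitz piece; no connectedness of $\Om\setminus\Gamma$ is even needed for this particular inequality (it only becomes essential for the \emph{first} Korn inequality / the coercivity argument, where one quotients out infinitesimal rigid motions).

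A small technical point to address is that the number $N$ of Lipschitz pieces and the shape-regularity of each piece depend on $\Gamma$, which is exactly why the constant $C$ in the statement is allowed to depend on $\Gamma$ (and on $\Om$); there is no claim of uniformity in $\Gamma$ here — that is deferred to Proposition~\ref{p.korn2}. So I would simply state that the decomposition from (b) is fixed once $\Gamma$ is fixed, and that $C$ inherits the dependence on $\Om$ and $\Gamma$ through it. An alternative route, if one prefers not to invoke the decomposition directly, is to use that $\Om\setminus\Gamma$ itself has Lipschitz boundary (which follows from (a)–(b): the crack being a finite union of $C^{1,1}$ arcs with the double-ball condition, the slit domain is Lipschitz), and apply the classical Korn inequality directly on $\Om\setminus\Gamma$; but I find the decomposition argument cleaner since it is literally what property (b) hands us.

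The only step requiring any care — and the one I would flag as the "main obstacle," though it is more a bookkeeping issue than a genuine difficulty — is making sure the traces and the Sobolev space $H^1(\Om\setminus\Gamma;\R^2)$ behave consistently with the decomposition, i.e.\ that a function in $H^1(\Om\setminus\Gamma;\R^2)$ restricts to an $H^1$ function on each $U_j$ with no loss, and conversely. This is immediate because $\Om\setminus\Gamma$ is open and the $U_j$ cover it up to a null set, so restriction and the identity $\|\nabla u\|_{2,\Om\setminus\Gamma}^2=\sum_j\|\nabla u\|_{2,U_j}^2$ hold verbatim. Hence the proof is short: fix the decomposition from (b), apply the classical second Korn inequality on each Lipschitz domain $U_j$, and sum. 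I expect the author's proof to be essentially this, possibly citing a textbook statement of Korn's inequality on Lipschitz domains and possibly phrasing it via the Lipschitz regularity of $\Om\setminus\Gamma$ itself rather than the explicit decomposition.
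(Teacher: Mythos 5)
Your main argument is essentially the paper's: the proof there also reduces to the classical second Korn inequality on finitely many Lipschitz pieces, the only cosmetic difference being that the paper first fixes an auxiliary extension $\widehat{\Gamma}\supset\Gamma$ so that $\Om\setminus\widehat{\Gamma}$ splits into \emph{disjoint} Lipschitz domains each touching $\partial_D\Om$ (a property not needed here but reused in Proposition~\ref{p.korn2}), whereas you read the covering directly off property (b) of Definition~\ref{defin:adcr} --- and for this inequality overlapping pieces are harmless, since one only needs $\|\nabla u\|_{2,\Om\setminus\Gamma}^2\le\sum_j\|\nabla u\|_{2,U_j}^2$ and $\|u\|_{2,U_j}\le\|u\|_2$. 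One caution on your parenthetical ``alternative route'': $\Om\setminus\Gamma$ is \emph{not} a Lipschitz domain in the standard sense (near an interior point of the crack the open set lies on both sides of its boundary, and at the tip it fails the cone/graph condition altogether), so Korn's inequality cannot be applied to the slit domain directly; this is precisely why the decomposition into Lipschitz subdomains is the right --- indeed the only --- route here.
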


\begin{proof}
Being $\Om\setminus\Gamma$ connected by arcs (see Definition~\ref{defin:adcr2}), it is possible to fix~$\widehat{\Gamma}\supset\Gamma$ such that~$\Omega\setminus \widehat{\Gamma}$ is the union of $N$ disjoint open sets~$\Om_{i}$ with Lipschitz boundaries~$\partial\Om_{i}$ such that $\HH^{1}(\partial_{D}\Om\cap\partial\Om_{i})>0$ for $i\in\{1,\ldots, N\}$, and apply the usual Korn inequality to~$u$ restricted to~$\Om_{i}$.
\end{proof}

\begin{proposition}\label{p.korn2}
Let~$\Gamma_{n},\Gamma_{\infty}\in\RR_{\eta}$ be such that~$\Gamma_{n}$ converges to~$\Gamma_{\infty}$ in the Hausdorff metric as~$n\to\infty$. Then, there exists a positive constant~$C=C(\Om)$ (independent of~$n$) such that for~$n$ sufficiently large
\begin{equation}\label{e.1}
\|\nabla{u}\|_{2}\leq C (\|u\|_{2}+\|\e u\|_{2})\qquad\text{for every $u\in H^{1}(\Om\setminus\Gamma_{n};\R^{2})$}\,.
\end{equation}
Moreover, for~$u\in H^{1}(\Om\setminus\Gamma_{n};\R^{2})$ with~$u=0$ $\Haus$-a.e.~on~$\partial_{D}\Om$ we have
\begin{equation}\label{e.2}
\|\nabla{u}\|_{2}\leq C\|\e u\|_{2}\qquad\text{and}\qquad \|u\|_{2}\leq C\|\e u\|_{2}\,.
\end{equation} 
\end{proposition}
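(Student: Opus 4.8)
The plan is to show that the Korn constant can be chosen uniformly along a Hausdorff-convergent sequence, by a compactness-contradiction argument combined with the geometric rigidity of the class $\RR_\eta$. First I would argue by contradiction: suppose \eqref{e.1} fails, so there are (a subsequence of) indices $n$ and functions $u_n\in H^1(\Om\setminus\Gamma_n;\R^2)$ with $\|\nabla u_n\|_2=1$ but $\|u_n\|_2+\|\E u_n\|_2\to0$. The key structural point is that, since all the $\Gamma_n$ and $\Gamma_\infty$ lie in $\RR_\eta$, one can find a single auxiliary set $\widehat\Gamma$ — a finite union of $C^{1,1}$ arcs, enlarging all of them — such that $\Om\setminus\widehat\Gamma$ splits into finitely many fixed Lipschitz subdomains $\Om_1,\dots,\Om_N$, each meeting $\partial_D\Om$ in a set of positive $\Haus$ measure, and moreover (this is where Hausdorff convergence and property (c), the two-balls condition, enter) this decomposition works simultaneously for all $n$ large. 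Indeed the uniform curvature bound $\eta^{-1}$ and the uniform bound on $\Haus(\Gamma_n)$ from Theorem~\ref{thm:comp-adcr}, together with the fact that the tips of $\Gamma_n$ converge to those of $\Gamma_\infty$, allow one to enclose every $\Gamma_n$ (for $n$ large) in a fixed tubular neighbourhood of $\Gamma_\infty$ whose complement still has the required finite Lipschitz decomposition; taking $\widehat\Gamma$ to be (the boundary of a neighbourhood of) $\Gamma_\infty$ adjusted near the tips does the job.

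Having fixed such an $n$-independent decomposition, I would then apply the classical Korn inequality on each fixed Lipschitz domain $\Om_i$ to the restriction $u_n|_{\Om_i}$, with a constant $C(\Om_i)$ that does not depend on $n$. Summing over $i=1,\dots,N$ and using $\Om\setminus\Gamma_n\subset\Om\setminus\widehat\Gamma=\bigcup_i\Om_i$ (up to a null set) gives \eqref{e.1} with $C=C(\Om):=\max_i C(\Om_i)\cdot N^{1/2}$ or similar, hence a contradiction with the assumed blow-up, and in fact one gets \eqref{e.1} directly without the contradiction once the uniform decomposition is in hand. For the second part \eqref{e.2}, concerning functions vanishing on $\partial_D\Om$, I would again pass to the decomposition $\{\Om_i\}$ and use the Korn--Poincaré inequality on each $\Om_i$: since $\Haus(\partial_D\Om\cap\partial\Om_i)>0$ with a lower bound uniform in $n$, the second Korn inequality $\|\nabla u\|_2\le C\|\E u\|_2$ holds on each $\Om_i$ with $n$-independent constant (the rigid motions are killed by the nontrivial Dirichlet datum on a set of positive measure), and then Poincaré on each $\Om_i$ yields $\|u\|_2\le C\|\E u\|_2$ as well. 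Summing over $i$ gives \eqref{e.2}.

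The main obstacle is the construction of the $n$-independent auxiliary set $\widehat\Gamma$ and the verification that $\Om\setminus\widehat\Gamma$ decomposes into the \emph{same} finitely many Lipschitz subdomains for all large $n$, each still touching $\partial_D\Om$ in a set of positive (uniformly bounded below) $\Haus$-measure. This is a purely geometric matter but it is where all the hypotheses of $\RR_\eta$ are used: the bounded curvature and the two-balls condition \eqref{doppia-palla} prevent the curves from oscillating or pinching as $n\to\infty$, property (f) and the assumption $\Ga^m\supseteq\Ga_0^m$ keep the extensions uniformly away from $\partial\Om$ and from each other, and Hausdorff convergence together with the convergence $\Haus(\Gamma_n)\to\Haus(\Gamma_\infty)$ ensures no length is lost in the limit, so the $\Gamma_n$ eventually lie in an arbitrarily thin neighbourhood of $\Gamma_\infty$ without developing new components. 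Once this neighbourhood is available, it separates $\Om$ into the prescribed Lipschitz pieces; everything else is a direct application of the standard (first and second) Korn inequalities on fixed domains, exactly as in the proof of Proposition~\ref{p.korn1}.
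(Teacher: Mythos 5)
There is a genuine gap, and it sits exactly at the point you yourself flag as ``the main obstacle'': the $n$-independent auxiliary set $\widehat\Gamma$ does not exist. A fixed finite union of $C^{1,1}$ arcs can contain every $\Gamma_n$ for large $n$ only if the $\Gamma_n$ eventually stabilise; Hausdorff convergence merely places $\Gamma_n$ in ever thinner neighbourhoods of $\Gamma_\infty$, it does not make the $\Gamma_n$ subsets of any fixed one-dimensional set, and $\bigcup_n\Gamma_n$ is in general not a finite union of arcs with Lipschitz complement. Your fallback --- taking $\widehat\Gamma$ to be (the boundary of) a tubular neighbourhood of $\Gamma_\infty$ --- fails for the complementary reason: the pieces $\Om_i$ then cover only $\Om$ minus the tube, whereas $u_n\in H^1(\Om\setminus\Gamma_n;\R^2)$ must be controlled also \emph{inside} the tube, where the geometry of $\Om\setminus\Gamma_n$ genuinely depends on $n$; the classical Korn inequality on the outer pieces says nothing about $\|\nabla u_n\|_2$ there. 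So the reduction to ``standard Korn on fixed domains'' does not go through, and this is not a technicality one can wave away: it is the whole content of the proposition.

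The paper resolves this differently. It uses $n$-\emph{dependent} decompositions $\Om\setminus\widehat\Gamma_n=\bigcup_{i=1}^N\Om_i^n$ whose boundaries are Lipschitz with a constant $L$ \emph{uniform} in $n$, and $\Om_i^n\to\Om_i^\infty$ in the Hausdorff metric. The uniform constant in \eqref{e.1} then comes from two ingredients: an interior estimate on a fixed $\Om'\Subset\Om_i^\infty$ (which lies in $\Om_i^n$ for large $n$, so Proposition~\ref{p.korn1} applies with an $n$-free constant), and, near $\partial\Om_i^n$, a Korn inequality whose constant depends only on the Lipschitz character $L$ of the boundary (the cited \cite[Theorem~4.2]{MR2350364}), applied locally. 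This last ingredient --- a Korn constant controlled by the Lipschitz constant rather than by the domain itself --- is the key lemma your plan is missing. For \eqref{e.2} the paper likewise does not apply a per-piece Korn--Poincar\'e inequality (which would again require $n$-uniformity on varying domains); it runs a compactness--contradiction argument: normalised $u_k$ with $\|\E u_k\|_2\to0$ converge, via \eqref{e.1} and \cite[Proposition~7.1]{MR3614643}, to a rigid motion in $H^1(\Om\setminus\Gamma_\infty;\R^2)$ that vanishes on $\partial_D\Om$ (using that $\Gamma_n$ coincides with $\Gamma_0$ near $\partial\Om$), hence is zero; the contradiction $1=\|u_k\|_2\to0$ then requires upgrading $L^2_{loc}$ convergence to $L^2$ convergence up to the crack, which is done by a reflection argument across $\Gamma_n$. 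You would need to supply both the uniform-Lipschitz Korn lemma and this compactness step to make your outline into a proof.
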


\begin{proof}
 At least for~$n$ sufficiently large, we may assume that there exists an extension~$\widehat{\Gamma}_{n}$ of~$\Gamma_{n}$  such that~$\Om\setminus\widehat{\Gamma}_{n}=\bigcup_{i=1}^{N} \Om_{i}^{n}$, where $\Om_{i}^{n}$ ($i=1,\dots,N$) are open bounded disjoint sets with Lipschitz boundaries and Lipschitz constant~$L$ independent of~$n$. Moreover, we can assume that~$\HH^{1}(\partial_{D}\Om\cap\partial\Om^{n}_{i})>0$ for $i\in\{1,\ldots, N\}$ and every~$n$. The same construction can be repeated for~$n=\infty$ in such a way that $\Om_{i}^{n}$ converges to~$\Om_{i}^{\infty}$ in the Hausdorff metric as~$n\to\infty$.

Let us now fix~$\Om'\Subset \Om_{1}^{\infty}$. For~$n$ large enough (including the case $n=\infty$), we have that~$\Om'\Subset\Om_{1}^{n}$. Hence, applying Proposition~\ref{p.korn1} in~$\Om'$ we deduce that there exists a positive constant~$C'$ independent of~$n$ such that
\begin{equation}\label{e.3}
\|\nabla{u}\|_{2,\Om'} \leq C' (\|u\|_{2,\Om'}+\|\e u\|_{2,\Om'})\qquad\text{for every~$u\in H^{1}(\Om\setminus\Gamma_{n};\R^{2})$}\,.
\end{equation}
Since~$\Om_{1}^{n}$ and~$\Om_{1}^{\infty}$ share the same Lipschitz constant~$L$, applying locally, close to the boundary of~$\Om_{1}^{n}$ (resp.~$\Om_{1}^{\infty}$), the results of~\cite[Theorem~4.2]{MR2350364}, we also obtain that there exists a positive constant~$\widetilde{C}$ such that
\begin{equation}\label{e.4}
\|\nabla{u}\|_{2,\Om_{1}^{n}\setminus \overline{\Om'}}\leq \widetilde{C} (\|u\|_{2,\Om_{1}^{n}\setminus \overline{\Om'}} + \|\e u\|_{2,\Om_{1}^{n}\setminus \overline{\Om'}})\qquad\text{for every~$u\in H^{1}(\Om\setminus\Gamma_{n};\R^{2})$}\,.
\end{equation}
The same inequality can be proven for~$\Om^{n}_{i}$, $i\geq 2$. Therefore, combining~\eqref{e.3} and~\eqref{e.4} we get~\eqref{e.1} for some positive constant~$C$ independent of~$n\in \mathbb{N}\cup\{\infty\}$,~$n$ large enough.

To prove~\eqref{e.2} it is enough to show that
\begin{equation}\label{e.5}
\|u\|_{2} \leq C \|\e u\|_{2}\qquad\text{for every $u\in H^{1}(\Om\setminus\Gamma_{n};\R^{2})$ with~$u=0$ $\Haus$-a.e.~on~$\partial_{D}\Om$, n large enough}\,.
\end{equation}
We proceed with the usual contradiction argument. Assume that~\eqref{e.5} is false. Then, for every~$k\in\mathbb{N}$ there exist~$n_{k}>n_{k-1}$ and~$u_{k}\in H^{1}(\Om\setminus\Gamma_{n_{k}};\R^{2})$ such that $\|u_{k}\|_{2} > k \|\e u_{k}\|_{2}$. Without loss of generality, we may assume that~$\|u_{k}\|_{2}=1$. By~\eqref{e.1} we deduce that~$\|\nabla{u}_{k}\|_{2}$ is bounded. Hence, up to a subsequence,~$\nabla{u}_{k}\rightharpoonup \varphi$ weakly in~$L^{2}(\Om;\M^{2})$ and $u_{k}\to u$ in~$L^{2}_{loc}(\Om\setminus\Gamma_{\infty};\R^{2})$, which implies that $u\in H^{1}_{loc}(\Om\setminus\Gamma_{\infty};\R^{2})$ with~$\nabla{u}=\varphi$. Since $\varphi\in L^{2}(\Om;\M^{2})$, applying~\cite[Proposition~7.1]{MR3614643} we deduce that~$u\in H^{1}(\Om\setminus\Gamma_{\infty};\R^{2})$. Since~$\e u_{k}$ converges to~$0$ in~$L^{2}(\Om;\M^{2})$, we get that $\e u=0$ in~$\Om$. Thus,~$u$ is a rigid movement in~$\Om$, i.e., there exist~$\mathrm{A}\in\M^{2}_{skw}$ and $b\in\R^{2}$ such that $u= \mathrm{A}x+b$ for~$x\in\Om$. Moreover, setting $\Om_{\eta}:=\{x\in\Om:\, \mathrm{dist}(x,\partial\Om)<\eta\}$, by Definition~\ref{defin:adcr2} we have~$(\Gamma_{n}\cap \Om_{\eta}) \setminus \Gamma_{0}=\emptyset$ and $u_{k}\rightharpoonup u$ in~$H^{1}(\Om_{\eta}\setminus\Gamma_{0};\R^{2})$. Therefore, $u=0$ $\Haus$-a.e.~on~$\partial_{D} \Om$, which implies that~$u=0$. We claim that $\|u_{k}\|_{2}\to \|u\|_{2}$. Indeed, $\|u_{k}\|_{2,\Om'}\to \|u\|_{2,\Om'}$ for every~$\Om'\Subset\Om\setminus\Gamma_{\infty}$. By a simple reflection argument applied on both sides of the crack set~$\Gamma_{n}$, we instead obtain that $\|u_{k}\|_{2,\Om\setminus\overline{\Om}'}\to \|u\|_{2,\Om\setminus\overline{\Om}'}$. Thus, $1=\|u_{k}\|_{2}\to \|u\|_{2}=0$, which is a contradiction. This concludes the proof of~\eqref{e.2}.
\end{proof}

We are now ready to prove the continuity of the energy~$\En$ with respect to the crack set. The following lemma is actually stated in a more general setting. Indeed, we show the continuity of the displacement~$u$ solution of~\eqref{pb:min} not only w.r.t.\ the  Hausdorff convergence of sets in~$\mathcal{R}_{\eta}$, but also w.r.t.\ the data of the problem, i.e., the applied forces, the boundary datum, and the elasticity tensor. Such a continuity result will be useful in the next section, where we prove the differentiability of~$\En$ w.r.t.~crack elongations  by using some approximations.

\begin{lemma}\label{l.continuity} 
Let $f_{n},f_{\infty}\in L^{2}(\Om;\R^{2})$, $g_{n}, g_{\infty} \in L^{2}(\partial_{S}\Om;\R^{2})$, $w_{n}, w_{\infty}\in H^{1}(\partial_{S}\Om;\R^{2})$, $\C_{n}, \C_{\infty} \in C^{0,1}(\overline{\Om})$, $\Gamma_{n},\Gamma_{\infty}\in\RR_{\eta}$, and $n\in \mathbb N$ be such that $f_{n} \to f_{\infty}$ strongly in~$L^{2}(\Om;\R^{2})$, $w_{n} \to w_{\infty}$ in~$H^{1}(\Om \setminus\Gamma_{0} ;\R^{2})$, $g_{n} \rightharpoonup g_{\infty}$ weakly in~$L^{2}(\partial_{S}\Om;\R^{2})$, $\C_{n}\to \C_{\infty}$ uniformly in~$\overline\Om$, and $\Gamma_{n}\to\Gamma_{\infty}$ in the Hausdorff metric, as $n\to \infty$.

Then, the energies $\En(f_n,g_n,w_n,\C_n;\Gamma_n)$ converge to $\En(f_\infty, g_\infty,w_\infty,\C_\infty;\Gamma_\infty)$ in the limit as $n\to \infty$. Moreover, the corresponding displacements~$u_n$ and~$u_{\infty}$, solutions to the associated minimum problems~\eqref{pb:min}, satisfy $\nabla u_{n}\to \nabla u_{\infty}$ strongly in $L^{2}(\Om;\M^{2})$.

\end{lemma}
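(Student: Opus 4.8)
The plan is to pull back all competitors onto the \emph{fixed} domain $\Om\setdiff\Gamma_\infty$ by means of a sequence of bi-Lipschitz homeomorphisms of $\overline\Om$ converging to the identity, thereby reducing the statement to the essentially standard continuity of the minimizers of uniformly elliptic quadratic functionals under uniform convergence of the coefficients and strong/weak convergence of the data. The technical core — and the step I expect to be the main obstacle — is the construction of these homeomorphisms. For $n$ large I would produce bi-Lipschitz maps $\Phi_n\colon\overline\Om\to\overline\Om$ with $\Phi_n(\Gamma_\infty)=\Gamma_n$, equal to the identity near $\partial\Om$ and outside a fixed neighbourhood of $\bigcup_m(\Gamma_\infty^m\setdiff\Gamma_0^m)$, with $\det\nabla\Phi_n$ bounded away from $0$ and $\infty$ uniformly in $n$, and with $\Phi_n\to id$ in $W^{1,\infty}(\Om;\R^2)$. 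To this end: since $\Gamma_n^m$ and $\Gamma_\infty^m$ are arcs of $C^{1,1}$ curves with curvature $\le1/\eta$ and $\Haus$-length bounded uniformly in $n$ (Theorem~\ref{thm:comp-adcr}), equi-boundedness of their arc-length parametrizations and of their first derivatives, Ascoli--Arzelà, and uniqueness of the limit forced by Hausdorff convergence give $C^1$-convergence of the parametrization of $\Gamma_n^m$ to that of $\Gamma_\infty^m$; working in a tubular neighbourhood of width $\eta$ of a smooth curve $C^1$-close to $\Gamma_\infty^m$ — disjoint from $\partial\Om$ and from the other components of the crack beyond $\Gamma_0$, by Definition~\ref{defin:adcr2} — one writes both $\Gamma_\infty^m$ and $\Gamma_n^m$ as graphs of $C^1$-small functions, maps the former onto the latter through a small normal displacement combined with a tangential rescaling of ratio $\Haus(\Gamma_n^m)/\Haus(\Gamma_\infty^m)\to1$, and cuts the resulting map off to the identity outside the tube. (This parallels the diffeomorphisms used for crack elongations; see Section~\ref{s.ERR} and~\cite{MR2802893,MR2851705}.)

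Next, set $\tilde u_n:=u_n\circ\Phi_n\in H^1(\Om\setdiff\Gamma_\infty;\R^2)$. Since $\Phi_n=id$ near $\partial\Om$ one has $\tilde u_n=w_n$ on $\partial_D\Om$ and $\int_{\partial_S\Om}g_n\cdot u_n\,\de\Haus=\int_{\partial_S\Om}g_n\cdot\tilde u_n\,\de\Haus$, and a change of variables shows that $\tilde u_n$ minimizes over $\{v\in H^1(\Om\setdiff\Gamma_\infty;\R^2)\colon v=w_n\text{ on }\partial_D\Om\}$ the functional
\[
\tilde F_n(v):=\tfrac12\int_{\Om\setdiff\Gamma_\infty}\tilde\C_n\,\nabla v:\nabla v\,\de x-\int_{\Om\setdiff\Gamma_\infty}\tilde f_n\cdot v\,\de x-\int_{\partial_S\Om}g_n\cdot v\,\de\Haus\,,
\]
where $\tilde f_n:=(f_n\circ\Phi_n)\,|\det\nabla\Phi_n|\to f_\infty$ strongly in $L^2$, and the pulled-back tensor, defined by $\tilde\C_n\,G:G:=|\det\nabla\Phi_n|\,\C_n(\Phi_n)[\mathrm{sym}(G(\nabla\Phi_n)^{-1})]:\mathrm{sym}(G(\nabla\Phi_n)^{-1})$, satisfies $\tilde\C_n\,G:G\to\C_\infty\,\mathrm{sym}(G):\mathrm{sym}(G)$ uniformly on $\overline\Om$ for bounded $G$, by the previous step and $\C_n\to\C_\infty$ uniformly; moreover $\tilde F_n(\tilde u_n)=\En(f_n,g_n,w_n,\C_n;\Gamma_n)$. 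Testing the minimality of $u_n$ against $w_n$, using the uniform ellipticity of $\C_n$ for $n$ large together with the uniform Korn inequality on $\Om\setdiff\Gamma_n$ (Proposition~\ref{p.korn2}) and the continuity of the trace on $\partial_S\Om$ (uniform in $n$, the crack being bounded away from $\partial_S\Om$), one obtains $\|u_n\|_{H^1(\Om\setdiff\Gamma_n;\R^2)}\le C$ independent of $n$, hence $\|\tilde u_n\|_{H^1(\Om\setdiff\Gamma_\infty;\R^2)}\le C$.

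Up to a subsequence, $\tilde u_n\wto\tilde u$ weakly in $H^1(\Om\setdiff\Gamma_\infty;\R^2)$, hence strongly in $L^2(\Om;\R^2)$ and in $L^2(\partial_S\Om;\R^2)$ (Rellich and compactness of the trace); since $\tilde u_n=w_n\to w_\infty$ on $\partial_D\Om$, continuity of traces gives $\tilde u=w_\infty$ on $\partial_D\Om$. By weak lower semicontinuity of $v\mapsto\int\C_\infty\e v:\e v$, the uniform convergence of $\tilde\C_n$, and the convergences of the data, $\liminf_n\tilde F_n(\tilde u_n)\ge F_\infty(\tilde u)$, where $F_\infty$ is the functional minimized in~\eqref{pb:min} for the data $(f_\infty,g_\infty,w_\infty,\C_\infty)$ on $\Om\setdiff\Gamma_\infty$. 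On the other hand, if $u_\infty$ denotes its (unique) minimizer — the displacement in the statement — then, recalling $\Gamma_0\subseteq\Gamma_\infty$, the competitors $v_n:=u_\infty-w_\infty+w_n$ are admissible for $\tilde F_n$ and converge to $u_\infty$ strongly in $H^1(\Om\setdiff\Gamma_\infty;\R^2)$, so $\tilde F_n(v_n)\to F_\infty(u_\infty)=\min F_\infty$. The chain
\[
\min F_\infty\ \ge\ \limsup_n\tilde F_n(\tilde u_n)\ \ge\ \liminf_n\tilde F_n(\tilde u_n)\ \ge\ F_\infty(\tilde u)\ \ge\ \min F_\infty
\]
then forces equalities throughout: $\tilde u$ minimizes $F_\infty$, so $\tilde u=u_\infty$ by uniqueness, and $\En(f_n,g_n,w_n,\C_n;\Gamma_n)=\tilde F_n(\tilde u_n)\to F_\infty(u_\infty)=\En(f_\infty,g_\infty,w_\infty,\C_\infty;\Gamma_\infty)$.

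Finally, since the linear terms of $\tilde F_n(\tilde u_n)$ converge to those of $F_\infty(u_\infty)$ (strong $\tilde f_n\to f_\infty$ against strong $\tilde u_n\to u_\infty$ in $L^2$; weak $g_n\wto g_\infty$ against strongly convergent traces), the previous step gives $\int\tilde\C_n\,\nabla\tilde u_n:\nabla\tilde u_n\to\int\C_\infty\e u_\infty:\e u_\infty$, whence $\int\C_\infty\e\tilde u_n:\e\tilde u_n\to\int\C_\infty\e u_\infty:\e u_\infty$ using the uniform convergence of $\tilde\C_n$ and the $L^2$-boundedness of $\nabla\tilde u_n$. As the quadratic form $v\mapsto\int_\Om\C_\infty v:v$ induces an equivalent norm on $L^2(\Om;\M^2_{sym})$ and $\e\tilde u_n\wto\e u_\infty$ weakly, convergence of the norms upgrades to $\e\tilde u_n\to\e u_\infty$ strongly in $L^2$; applying~\eqref{e.2} on $\Om\setdiff\Gamma_\infty$ to $\psi_n:=(\tilde u_n-w_n)-(u_\infty-w_\infty)$ — which vanishes on $\partial_D\Om$ and satisfies $\e\psi_n\to0$ in $L^2$, since $\e w_n\to\e w_\infty$ — gives $\nabla\psi_n\to0$ and therefore $\nabla\tilde u_n\to\nabla u_\infty$ strongly in $L^2(\Om;\M^2)$. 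Pulling back, $\nabla u_n=(\nabla\tilde u_n\circ\Phi_n^{-1})\,\nabla(\Phi_n^{-1})$ with $\Phi_n^{-1}\to id$ in $W^{1,\infty}$ and uniformly bounded Jacobians, so $\nabla u_n\to\nabla u_\infty$ strongly in $L^2(\Om;\M^2)$. Since every subsequence yields these same limits, the whole sequences converge, which concludes the proof.
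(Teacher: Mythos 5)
Your proof is correct and takes essentially the same route as the paper's: both rest on constructing a bi-Lipschitz map between $\Omega\setminus\Gamma_\infty$ and $\Omega\setminus\Gamma_n$ that converges to the identity in $W^{1,\infty}$, followed by the direct method (minimality, a recovery sequence built from $u_\infty$ and $w_n-w_\infty$, lower semicontinuity, and upgrading to strong convergence of $\e u_n$ via convergence of the quadratic energies together with the uniform Korn inequality of Proposition~\ref{p.korn2}). The only difference is the direction of transport --- you pull the minimizers and the functional back to the fixed domain $\Omega\setminus\Gamma_\infty$, whereas the paper pushes competitors forward to $\Omega\setminus\Gamma_n$ and identifies the weak limit of $u_n$ on the varying domains --- which is immaterial (and in fact lets you avoid the auxiliary result the paper invokes to recover $H^1$ regularity of the limit on the cracked domain).
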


\begin{proof}
The proof is carried out following the steps of \cite[Lemma~3.7]{MR3335535}. The letter $C$ will denote a positive constant, which can possibly change from line to line.

For the sake of clarity, we consider cracks in $\RR_\eta^{0,1}$. The proof can be easily generalized to the whole class~$\RR_\eta$. For brevity, we set $\En_n \coloneqq \En(f_n,g_n,w_n,\C_n;\Gamma_n)$ and $\En_\infty \coloneqq \En(f_\infty, g_\infty,w_\infty,\C_\infty;\Gamma_\infty)$; furthermore, along the proof we denote with~$E_n$ and $E_\infty$ the functionals appearing in the minimization~\eqref{pb:min} with data~$\{f_n,g_n,w_n,\C_n,\Gamma_n\}$ and~$\{f_\infty, g_\infty,w_\infty,\C_\infty,\Gamma_\infty\}$, respectively. Clearly, we have 
\begin{displaymath}
\En_n=E_n(u_n) = \frac 12 \int_{\O} \C \E u_n : \E u_n\, \de x - \int_{\O} f_n \cdot u_n \, \de x - \int_{\partial_S\Om} g_n \cdot  u_n \, \de \Haus\qquad\text{for $n\in\mathbb{N}\cup\{\infty\}$}\,,
\end{displaymath}
where~$\E u_{n}$ are interpreted as functions defined a.e.~in~$\O$.

Let $\gamma_{n}\in C^{1,1}([0,\ell_{n}];\R^{2})$ and $\gamma_{\infty}\in C^{1,1}([0,\ell_{\infty}];\R^{2})$ be the arc-length parametrizations of~$\Gamma_{n}$ and~$\Gamma_{\infty}$, respectively, {where $\ell_{n}$ and $\ell_{\infty}$ denote the $\mathcal H^1$ measures of the crack sets.} By a simple rescaling {of $\gamma_n$}, we construct a $C^{1,1}$ parametrization~$\hat{\gamma}_{n}$ of~$\Gamma_{n}$, defined in $[0,\ell_{\infty}]$. {The new parametrization, by definition of~$\RR_\eta^{0,1}$, belongs to $W^{2,\infty}([0,\ell_{\infty}];\R^{2})$ and its norm is bounded above by a constant independent of $n$.} 
From the Hausdorff convergence of~$\Gamma_{n}$ to~$\Gamma_{\infty}$, we deduce that~$\hat{\gamma}_{n}$ converges to~$\gamma_{\infty}$ weakly* in~$W^{2,\infty}([0,\ell_{\infty}];\R^{2})$ and strongly in~$W^{1,\infty}([0,\ell_{\infty}];\R^{2})$.

Let us fix $\rho>0$ sufficiently small, so that the projection~$\Pi_{\Gamma_{\infty}}$ over~$\Gamma_{\infty}$ is well defined in $\II_{\rho}({\Gamma}_{\infty}):= \{x\in\Om:\, d(x,\Gamma_{\infty})<\rho)\}$. For~$n$ large enough we have  ${\Gamma}_{n}\subseteq\II_{\rho}({\Gamma}_{\infty})$. We want to construct a Lipschitz function $\Lambda_{n}$ such that $\Lambda_{n}(\Gamma_{\infty}) = \Gamma_{n} $ and $\Lambda_{n} (x) = x $ for~$x \in \R^{2} \setminus \II_{\rho} ( \Gamma_{\infty} ) $. For every $x\in\II_{\rho}(\Gamma_{\infty})$ we define $s(x)\in[0,\ell_{\infty}]$ in such a way that $\gamma_{\infty}(s(x))=\Pi_{\Gamma_{\infty}}(x)$. We notice that the map $x\mapsto s(x)$ is locally Lipschitz, while $\Pi_{\Gamma_{\infty}}$ is Lipschitz on~$\II_{\rho}(\Gamma_{\infty})$. Moreover, we set $d_{n}:=\|\hat{\gamma}_{n}- \gamma_{\infty}\|^{1/2}_{W^{1,\infty}}$ and $\lambda_{n}(t):= \big(1-\tfrac{|t|}{d_{n}}\big)_{+}$, where $(\cdot)_{+}$ stands for the positive part.
With this notation at hand, we define
\[
\Lambda_{n}(x):= x+ \lambda_{n}(|x-\Pi_{\Gamma_{\infty}}(x)|) (\hat{\gamma}_{n}(s(x)) - \gamma_{\infty}(s(x))) \qquad\text{for $x\in\R^{2}$}\,.
\]
In particular, $\Lambda_{n}$ is Lipschitz, $\| \Lambda_{n} - id\|_{W^{1,\infty}}\leq C d_{n}\to 0$ as $n\to \infty$, {and, for $n$ large enough,} $\Lambda_{n}(\Gamma_{\infty})=\Gamma_{n}$ and $\Lambda_{n} = id$ out of~$\II_{d_{n}}(\Gamma_{\infty})$. Applying the Hadamard Theorem~\cite[Theorem~6.2.3]{MR1894435}, we deduce that $\Lambda_{n}$ is globally invertible with~$\| \Lambda_{n}^{-1} - id \|_{W^{1,\infty}}\to 0$ as $n\to\infty$.

Given $v\in H^{1}(\Om\setminus\Gamma_{\infty};\R^{2})$ with $v=w_{\infty}$ on~$\partial_D\Om\setminus\Gamma_{0}$, we have that the function $v_{n}:= v\circ \Lambda_{n}^{-1} + w_{n} - w_{\infty}$ belongs to~$H^{1}(\Om\setminus\Gamma_{n};\R^{2})$ and satisfies~$v_{n}= g_{n}$ on~$\partial_D\Om \setminus\Gamma_{0}$. Moreover, $\nabla{v_{n}} \to \nabla{v}$ in $L^{2}(\Om;\M^{2})$, $v_{n}\to v$ in~$L^{2}(\Om;\R^{2})$, and, by the continuity of the trace operator, $v_n \to v$ strongly in $L^2(\partial_S \Omega;\mathbb R^2)$. This asymptotic analysis implies that the sequence $\big(E_n(v_n)\big)_{n\in \mathbb N}$ is bounded and converges to $E_\infty(v)$ as $n\to \infty$.

By the minimality of~$u_{n}$ for $E_n$, we have
\begin{equation}\label{e.6}
E_{n} (u_{n}) \leq E_{n} (v_{n})<C\,.
\end{equation}
It is easy to see that the functionals $E_n$ are equi-coercive in $H^1(\Omega\setminus \Gamma_n;\mathbb R^2)$, so that  inequality \eqref{e.6}, together with Proposition~\ref{p.korn2}, provides a uniform bound on the $L^2$ norm of $u_n$, of its gradient, and of its trace. Therefore, up to a subsequence (not relabeled), we have~$u_{n}\rightharpoonup \varphi$ weakly in~$L^{2}(\Om;\R^{2})$ for some $\varphi \in L^{2}(\Om;\R^{2})$.  Moreover, in a suitably small neighborhood $\mathcal U$ of the boundary, this convergence is stronger, since $(\Omega \setminus \Gamma_n)\cap \mathcal U= (\Omega\setminus \Gamma_0)\cap \mathcal U$ for every $n$. More precisely, we have $u_n\rightharpoonup \varphi$ weakly in $H^1((\Omega\setminus \Gamma_0)\cap \mathcal U;\mathbb R^2)$ and, therefore, $u_n\to \varphi$ strongly in $L^{2}(\partial \Om;\R^{2})$ and $\varphi=w_\infty$ on $\partial_D\O$.
The above convergences imply that
\begin{equation}\label{e:smth}
\lim_{n\to\infty} \int_{\Om} f_{n}\cdot u_{n}\,\di x + \int_{\partial_{S}\Om} g_{n}\cdot u_{n}\,\di\Haus =  \int_{\Om} f_{\infty}\cdot \varphi \,\di x + \int_{\partial_{S}\Om} g_{\infty}\cdot \varphi\,\di\Haus\,.
\end{equation}
Hence, passing to the liminf in \eqref{e.6} we get 
\[
E_{\infty} (\varphi)\leq E_{\infty} (v) \qquad\text{for every $v\in H^{1}(\Om \setminus\Gamma_{\infty}; \R^{2} )$ with $v=w_{\infty}$ $\Haus$-a.e.~on~$\partial_{D}\Om$}\,.
\]
Thus,~$\varphi$ is a minimizer of~$E_{\infty}$ in $H^{1}(\Om\setminus\Gamma_{\infty};\R^{2})$ with boundary condition~$w_{\infty}$. Therefore, by uniqueness of the minimizer, $\varphi= u_{\infty}$. The strong convergence of the gradients follows by considering~\eqref{e.6} for $v=u_{\infty}$. Indeed, we have
\[
E_{\infty} (u_{\infty}) \leq \liminf_{n}\, E_{n} (u_{n}) \leq \limsup_{n} \, E_{n} (u_{n}) \leq \lim_{n} E_{n} (  u_{\infty} \circ \Lambda_{n}^{-1} + w_{n} - w_{\infty}) = E_{\infty} ( u_{\infty}) \,,
\]
which implies, together with~\eqref{e:smth}, that $\En_{n}\to \En_{\infty}$ and $\e u_{n} \to \e u_{\infty}$ in~$L^{2} ( \Om; \M^{2}_{sym} )$. Applying Proposition~\ref{p.korn2} and recalling that~$w_{n} \to w_{\infty}$ in~$H^{1} (\Om \setminus \Gamma_{0} ; \R^{2})$, we also obtain the strong convergence of~$\nabla{u}_{n}$ to~$\nabla{u}_{\infty}$ in~$L^{2} (\Om; \M^{2})$. This concludes the proof of the lemma.
\end{proof}

As a corollary of Lemma~\ref{l.continuity} we deduce the existence of solutions of the incremental minimum problems~\eqref{pb:increm-min-eps}.

\begin{corollary} \label{cor:disc-exist}
Fix $\eps>0$, $k\in\N$, and $i\in\{1,\dots,k\}$. Then the minimum problem~\eqref{pb:increm-min-eps} admits a solution.
\end{corollary}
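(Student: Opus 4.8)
The plan is to run the direct method of the calculus of variations on \eqref{pb:increm-min-eps}, exploiting the sequential compactness of $\adcr$ with respect to the Hausdorff metric (Theorem~\ref{thm:comp-adcr}) together with the continuity of the elastic energy established in Lemma~\ref{l.continuity}. First I would check that the problem is well posed: the admissible set $\{\Ga\in\adcr:\Ga\spq\Ga_{\eps,k,i-1}\}$ is nonempty because $\Ga_{\eps,k,i-1}\in\adcr$ is itself a competitor, and the functional is bounded from below, since the surface term $\Haus(\Ga)$ and the penalization term are nonnegative, while $\En(t_{k,i};\Ga)$ is bounded below uniformly in $\Ga\in\adcr$ by the equi-coercivity of the elastic energy (a consequence of the uniform Korn and Poincar\'e inequalities of Proposition~\ref{p.korn2} together with (H1)--(H4)). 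Hence the infimum is a finite number $\mu$, and I would fix a minimizing sequence $(\Ga_n)_{n\in\N}$.

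Next I would extract, via Theorem~\ref{thm:comp-adcr}, a (not relabeled) subsequence with $\Ga_n\to\Ga_\infty$ in the Hausdorff metric for some $\Ga_\infty\in\adcr$ and $\Haus(\Ga_n)\to\Haus(\Ga_\infty)$; by the structure of the convergence in $\adcr$ (see \cite{MR2802893}) one also obtains that each component $\Ga_n^m$ converges in the Hausdorff metric to $\Ga_\infty^m$ with $\Haus(\Ga_n^m)\to\Haus(\Ga_\infty^m)$ for $m=1,\dots,M$. The unilateral constraint is stable under this limit: if $x\in\Ga_{\eps,k,i-1}$ then $\dist(x,\Ga_n)=0$ for all $n$, hence $\dist(x,\Ga_\infty)=0$ and, $\Ga_\infty$ being closed, $x\in\Ga_\infty$; thus $\Ga_\infty\spq\Ga_{\eps,k,i-1}$. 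Then I would pass to the limit term by term: Lemma~\ref{l.continuity}, applied with the data $f(t_{k,i}),g(t_{k,i}),w(t_{k,i}),\C$ held fixed along the sequence and cracks $\Ga_n\to\Ga_\infty$, gives $\En(t_{k,i};\Ga_n)\to\En(t_{k,i};\Ga_\infty)$; the surface term converges because $\Haus(\Ga_n)\to\Haus(\Ga_\infty)$; and for the penalization term I would use that $\Ga_n^m\spq\Ga_{\eps,k,i-1}^m$ together with the additivity of $\Haus$ along each of these $C^{1,1}$ arcs to write $\Haus(\Ga_n^m\setdiff\Ga_{\eps,k,i-1}^m)=\Haus(\Ga_n^m)-\Haus(\Ga_{\eps,k,i-1}^m)$, which converges to $\Haus(\Ga_\infty^m)-\Haus(\Ga_{\eps,k,i-1}^m)=\Haus(\Ga_\infty^m\setdiff\Ga_{\eps,k,i-1}^m)$. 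Collecting these facts, the value of the functional at $\Ga_\infty$ equals $\lim_n(\cdots)=\mu$, so $\Ga_\infty$ is the desired minimizer.

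The only point requiring care — and the one I would regard as the main technical obstacle — is the behavior of the decomposition $\Ga=\bigcup_{m=1}^M\Ga^m$ under Hausdorff limits, namely that the individual components converge and that the length of each component passes to the limit; this is precisely what makes the set-difference appearing in the penalization continuous, once it is combined with the monotonicity constraint $\Ga\spq\Ga_{\eps,k,i-1}$. This is guaranteed by properties (a)--(f) defining $\adcr$ and by the compactness argument of \cite{MR2802893} underlying Theorem~\ref{thm:comp-adcr}. Everything else is a routine application of the direct method.
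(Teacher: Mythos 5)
Your proof is correct and follows essentially the same route as the paper's: the direct method, compactness of $\adcr$ (Theorem~\ref{thm:comp-adcr}) to extract a Hausdorff-convergent minimizing sequence along which the unilateral constraint and the lengths pass to the limit, and Lemma~\ref{l.continuity} with the data held fixed to obtain convergence of the elastic energies. The additional care you take with the penalization term, via the identity $\Haus(\Ga_n^m\setminus\Ga_{\eps,k,i-1}^m)=\Haus(\Ga_n^m)-\Haus(\Ga_{\eps,k,i-1}^m)$ and component-wise length convergence, is a detail the paper leaves implicit but which you handle correctly.
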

\begin{proof}
It is sufficient to apply the direct method. Let $(\Gamma_{n})_{n\in\N} \sbq\adcr$ be a minimizing sequence for~\eqref{pb:increm-min-eps}.
By Theorem \ref{thm:comp-adcr}, $\Gamma_{n}$ converges in the Hausdorff metric, up to a subsequence (not relabeled), to some $\Gamma_{\infty}\in \adcr$ such that the constraint $\Ga_\infty\spq\Ga_{\eps,k,i-1}$ is preserved; moreover we have $\Haus (\Gamma_{n})\to \Haus(\Gamma_{\infty})$. Applying Lemma~\ref{l.continuity} with $\C_{n}= \C_{\infty}=\C$, $f_{n}= f_{\infty}= f(t_{k,i})$, $g_{n}=g_{\infty}= g(t_{k,i})$, and $w_{n}= w_{\infty}= w(t_{k,i})$, 
we obtain the convergence of the corresponding energies $\En(t_{k,i};\Ga_n)\to\En(t_{k,i};\Ga_\infty)$.
Hence,~$\Ga_\infty$ is a solution to the minimum problem.
\end{proof}

\section{The energy release rate}\label{s.ERR}

This section is devoted to the definition of the \emph{energy release rate}, i.e., the opposite of the derivative of the  energy~$\En(t;\cdot)$ 
with respect to the crack elongation. The problem is clearly time-independent, therefore we omit the variable $t$, which is kept fixed. As in the previous section, the energy in~\eqref{pb:min} simply writes $\En(\Gamma)$. 

Our aim is to generalize the results obtained in~\cite{MR3844481}, where the energy release rate has been computed only in presence of smooth cracks~$\Gamma$, in the absence of forces, and with a spatially constant elasticity tensor. Here we extend its definition to the case~$\Gamma \in \adcr$, non-zero volume and boundary forces $f\in L^{2}(\Om;\R^{2})$ and $g\in L^{2}(\partial_{S}\Om;\R^{2})$, boundary datum  $w\in H^{1}(\Om\setminus \Gamma_{0};\R^{2})$, and non-constant tensor~$\C\in C^{0,1}(\overline{\Om})$.

As in~\cite{MR3844481}, the fundamental steps are the following:
\begin{itemize}
\item[$(i)$] Given an increasing family of cracks $\Gamma_{\sigma} \in \mathcal{R}_{\eta}^{0,1}$
parametrized by their arc length~$\sigma\in[0,S]$, we prove that the map $\sigma \mapsto \En(\Gamma_{\sigma})$ is differentiable, thus
\[
\frac{\de \En(\Gamma_{\sigma})}{\de \sigma} \bigg|_{\sigma=s} \coloneqq \lim_{\sigma\to s} \frac{\En (\Gamma_{\sigma}) - \En(\Gamma_{s})  }{\sigma-s}\,.
\]

\item[$(ii)$] We show that the above derivative only depends on the current  configuration~$\Gamma_{s}$, and not on its possible extensions, i.e.,
if $\Gamma_\sigma=\hat\Gamma_\sigma$ for $\sigma\le s$, then 
\[
\frac{\de \En(\Gamma_{\sigma})}{\de \sigma} \bigg|_{\sigma=s} =
\frac{\de \En(\hat\Gamma_{\sigma})}{\de \sigma} \bigg|_{\sigma=s} \,.
\]
In particular, we are allowed to write $-\frac{\de \En(\Gamma_{\sigma})}{\de \sigma} \bigg|_{\sigma=s}=:\G(\Gamma_s)$ with no ambiguity.
\end{itemize}

We point out a difference of our strategy with respect to the proof of \cite{MR2802893} for the antiplane case.
In that case, the energy release rate is first characterized via the stress intensity factor assuming that the volume force is null in a neighborhood of the crack tip;
then, one treats general forces by approximation, using the property that the stress intensity factor is continuous with respect to the force.
In this paper, in the planar case we do \emph{not} prove the existence of stress intensity factors for nonsmooth curves.
Hence, when expressing the energy release rate via integral forms, we have to deal carefully with the terms containing the external force.
Once the existence of the energy release rate is guaranteed, we will reduce to the case of forces that are null close to the tip via some approximation arguments, see Lemma \ref{l.approximation} below.

In order to rigorously proceed with~$(i)$, we first restrict our attention to cracks~$\Gamma_{s} \in \mathcal{R}_{\eta}^{0,1}$. We write~$\Gamma_{s}$ as
\begin{equation}\label{crack}
\Gamma_{s}\coloneqq\{\gamma(\sigma):\,0\leq\sigma\leq s\}\,,
\end{equation}
where~$\gamma\in C^{1,1}$ is the arc-length parametrization of~$\Gamma_{s}$. We will discuss in Remark~\ref{r.ERR} how to tackle the general case~$\Gamma\in\adcr$.
%
For brevity, we denote with $u_{s}\in H^{1}(\Om\setminus\Gamma_{s};\R^{2})$ the minimizer of~\eqref{pb:min}. As in the previous section, when explicitly needed, we will highlight the dependence on the data by writing~$\En( f,g,w, \C ;\Gamma_{s})$ for~$\En(\Gamma_{s})$.



In order to make explicit computations, for every $s\in(0,S)$ and~$\delta\in \mathbb R$  with $|\delta|$  small, we need to introduce a $C^{1,1}$ diffeomorphism~$F_{s,\delta}$ that transforms~$\Gamma_{s+\delta}$ in~$\Gamma_{s}$ and maps~$\Om$ into itself. Precisely, for~$r>0$ small enough we may assume that the curve~$\Gamma_{\sigma} \cap \B_{r}(\gamma(s))$,  for $|s-\sigma|$ small , is the graph of a~$C^{1,1}$-function~$\zeta$, with~$\zeta'(\gamma_{1}(s))=0$, where we have denoted with~$\gamma_{1}$ the first component of the arc-length parametrization~$\gamma$. For~$\delta\in \mathbb R$ small  in modulus , we define the function~$F_{s,\delta}\colon \B_{r/2}(\gamma(s)) \to \R^{2}$ by
\[
F_{s,\delta}(x)\coloneqq x+\left(\begin{array}{cc}
(\gamma_{1}(s+\delta)-\gamma_{1}(s))\varphi(x)\\[1mm]
\zeta(x_{1}+(\gamma_{1}(s+\delta)-\gamma_{1}(s))\varphi(x))-\zeta(x_{1})
\end{array}\right)\,,
\]
where~$\varphi\in C^{\infty}_{c}(\B_{r/2}(\gamma(s)))$ is a suitable cut-off function equal to~$1$ close to $\gamma(s)$. Notice that, for $r$ small enough, $\supp(\varphi)\cap\supp(w)=\emptyset$. 
We extend~$F_{s,\delta}$ to the identity in~$\R^{2}\setminus \B_{r/2}(\gamma(s))$. 

By the regularity of~$\zeta$,~$F_{s,\delta}$ is a~$C^{1,1}$ diffeomorphism of~$\R^{2}$ such that $F_{s,\delta}(\Gamma_{s})=\Gamma_{s+\delta}$ and $F_{s,0}=id$. Moreover, the following equalities hold:
\begin{align}
&\rho_{s}(x)\coloneqq \partial_{\delta}(F_{s,\delta}(x))|_{\delta=0}=\gamma'_{1}(s)\varphi(x)\left(\begin{array}{cc} 1\\ \zeta'(x_1)\end{array}\right)\label{someDer}
\\
& \partial_{\delta} (\det\nabla{F_{s,\delta}})|_{\delta=0}=\dive \rho_{s}\,,\qquad \partial_{\delta}(\nabla{F_{s,\delta}})|_{\delta=0}=-\partial_{\delta}(\nabla{F_{s,\delta}})^{-1}|_{\delta=0}=\nabla\rho_{s}\,.\notag
\end{align}
With this notation at hand, we are in a position to prove the differentiability of $s\mapsto \En(\Gamma_{s})$.

\begin{proposition}\label{prop-err}
Let~$\{\Gamma_\sigma\}_{\sigma \geq0} \subseteq \mathcal{R}^{0,1}_{\eta}$ be parametrized as in~\eqref{crack}. Let $f\in L^2(\Omega;\mathbb R^2)$, $g\in L^{2}(\partial_{S}\Om;\R^{2})$, $w\in H^{1}(\Om\setminus\Gamma_{0};\R^{2})$, and $\C\in C^{0,1}(\overline{\Om})$. Then, the map~$\sigma\mapsto \En(\Gamma_{\sigma})$ is differentiable and
\begin{equation}\label{e.err}
\begin{split}
\frac{\de \En(\Gamma_{\sigma})}{\de \sigma} \bigg|_{\sigma=s}= & \ \frac{1}{2}\int_{\Omega  \setminus \Gamma_{s} }(\mathrm{D}\C\,\rho_{s})\nabla{u_{s}} : \nabla{u_{s}}\,\di x -\int_{\Omega\setminus \Gamma_s} \!\!\!\!\!\!\! \C \nabla u_s \nabla \rho_s : \nabla u_s \, \de x \\
&  + \frac 12 \int_{\Omega\setminus \Gamma_s} \!\!\!\!\!\!\! \C\nabla u_s : \nabla u_s \div \rho_s \, \de x + \int_\Omega f \cdot \nabla u_s\,  \rho_s\, \de x\,,
\end{split}
\end{equation}
where $\mathrm{D}\C$ denotes the fourth order tensor
\[
(\mathrm{D}\C\,\rho_{s})_{ijkl}:=\sum_{m=1}^{2}\frac{\partial\C_{ijkl}}{\partial x_{m}}\,\rho_{s,m}\,,\qquad\rho_{s}=(\rho_{s,1},\rho_{s,2})\,.
\]
\end{proposition}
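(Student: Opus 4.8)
The plan is to prove differentiability of $\sigma \mapsto \En(\Gamma_\sigma)$ by transporting everything to the fixed domain $\Om \setminus \Gamma_s$ via the diffeomorphism $F_{s,\delta}$, so that the difference quotient becomes a quotient of integrals over a single fixed set whose integrands depend smoothly on $\delta$. First I would fix $s \in (0,S)$ and, for small $|\delta|$, use $F_{s,\delta}$ to pull back the competitor space: the map $v \mapsto v \circ F_{s,\delta}^{-1}$ is a linear isomorphism from $H^1(\Om \setminus \Gamma_s;\R^2)$ onto $H^1(\Om \setminus \Gamma_{s+\delta};\R^2)$ that preserves the boundary datum $w$ (here one uses $\supp(\varphi) \cap \supp(w) = \emptyset$, so $F_{s,\delta} = id$ near $\partial_D\Om$). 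Applying the change of variables $y = F_{s,\delta}(x)$ in \eqref{pb:min}, the energy $\En(\Gamma_{s+\delta})$ equals the minimum over $v \in H^1(\Om\setminus\Gamma_s;\R^2)$ with $v = w$ on $\partial_D\Om$ of a functional
\[
E_\delta(v) = \frac12 \int_{\Om\setminus\Gamma_s} \C(F_{s,\delta})\, (\nabla v\, (\nabla F_{s,\delta})^{-1}) : (\nabla v\, (\nabla F_{s,\delta})^{-1})\, \det\nabla F_{s,\delta}\, \de x - \int_{\Om\setminus\Gamma_s} (f\circ F_{s,\delta})\cdot v\, \det\nabla F_{s,\delta}\, \de x - \int_{\partial_S\Om} g\cdot v\, \de\Haus,
\]
the surface term being unchanged since $F_{s,\delta} = id$ near $\partial_S\Om$ as well. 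Writing $\E v$ for the symmetrized gradient and using that $\C$ acts on symmetric matrices, one can symmetrize the first integrand; but it is cleanest to keep full gradients as in \eqref{e.err}, exploiting the minor symmetries of $\C$.

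Next I would establish two facts about the family $\{E_\delta\}$. The map $\delta \mapsto E_\delta(v)$ is differentiable at $\delta = 0$ for each fixed $v$, with derivative computed termwise using the identities \eqref{someDer} for $\partial_\delta(\det\nabla F_{s,\delta})|_{\delta=0} = \div\rho_s$, $\partial_\delta(\nabla F_{s,\delta})^{-1}|_{\delta=0} = -\nabla\rho_s$, the chain rule $\partial_\delta(\C\circ F_{s,\delta})|_{\delta=0} = \DD\C\,\rho_s$, and $\partial_\delta(f\circ F_{s,\delta})|_{\delta=0} = (\nabla f)\rho_s$ — the last of these requires care since $f$ is merely $L^2$, so I would first handle it for smooth $f$ and then use a density/continuity argument, or integrate by parts to move the derivative off $f$ (writing $\int (f\circ F_{s,\delta})\cdot v\,\det\nabla F_{s,\delta} = \int f \cdot (v\circ F_{s,\delta}^{-1})$ and differentiating the right-hand side, which gives $\int f\cdot(\nabla u_s\,\rho_s)$ directly once $v = u_s$). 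Second, the minimizers $u_\delta$ of $E_\delta$ satisfy $\nabla u_\delta \to \nabla u_0 = \nabla u_s$ strongly in $L^2$ as $\delta \to 0$; this is exactly the content of Lemma \ref{l.continuity} applied with the transported data $\C(F_{s,\delta})$, $(f\circ F_{s,\delta})\det\nabla F_{s,\delta}$, etc., which converge in the required topologies, and with fixed crack $\Gamma_s$. With these in hand, differentiability of $\sigma \mapsto \En(\Gamma_\sigma)$ follows by a standard argument: from $E_\delta(u_0) \ge E_\delta(u_\delta) = \En(\Gamma_{s+\delta})$ and $E_0(u_\delta) \ge E_0(u_0) = \En(\Gamma_s)$ one sandwiches the difference quotient $\tfrac1\delta(\En(\Gamma_{s+\delta}) - \En(\Gamma_s))$ between $\tfrac1\delta(E_\delta(u_0) - E_0(u_0))$ and $\tfrac1\delta(E_\delta(u_\delta) - E_0(u_\delta))$; the first converges to $\partial_\delta E_\delta(u_0)|_{\delta=0}$, and the second to the same limit because $\partial_\delta E_\delta(v)|_{\delta=0}$ depends continuously on $\nabla v$ in $L^2$ (the integrands being quadratic, resp.\ linear, in $\nabla v$ with bounded coefficients) and $\nabla u_\delta \to \nabla u_s$.

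Finally, evaluating $\partial_\delta E_\delta(v)|_{\delta=0}$ at $v = u_s$ and collecting the four terms — one from differentiating $\C\circ F_{s,\delta}$ (giving $\tfrac12\int (\DD\C\,\rho_s)\nabla u_s:\nabla u_s$), one from differentiating $(\nabla F_{s,\delta})^{-1}$ twice and using the symmetry of $\C$ (giving $-\int \C\nabla u_s\nabla\rho_s:\nabla u_s$), one from differentiating $\det\nabla F_{s,\delta}$ in the elastic term (giving $\tfrac12\int \C\nabla u_s:\nabla u_s\,\div\rho_s$), and one from the force term (giving $\int f\cdot\nabla u_s\,\rho_s$) — yields precisely \eqref{e.err}. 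I expect the main obstacle to be the force term: because $f \in L^2(\Om;\R^2)$ only, one cannot naively differentiate $f \circ F_{s,\delta}$, so the argument must either proceed through the identity $\int(f\circ F_{s,\delta})\cdot v\,\det\nabla F_{s,\delta} = \int f\cdot(v\circ F_{s,\delta}^{-1})$ and differentiate in the transported variable (which is legitimate since $v\circ F_{s,\delta}^{-1}$ is differentiable in $\delta$ with values in $H^1$, hence in $L^2$, with derivative $-\nabla v\,\rho_s$ evaluated appropriately), or through approximation of $f$ by smooth functions combined with the $L^2$-continuity of $u_s$ in the data from Lemma \ref{l.continuity}; reconciling the two viewpoints to get the stated clean form $\int_\Omega f\cdot\nabla u_s\,\rho_s\,\de x$ is the delicate point.
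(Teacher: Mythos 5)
Your proposal is correct and follows essentially the same route as the paper: pull back via the diffeomorphism $F_{s,\delta}$, bound the difference quotient from above and below using the transported minimizers $u_s\circ F_{s,\delta}^{-1}$ and $u_{s+\delta}\circ F_{s,\delta}$ as competitors, differentiate the coefficients term by term, and treat the force term through the $L^2$-differentiability of $\delta\mapsto v\circ F_{s,\delta}^{-1}$ (the paper cites \cite[Lemma~3.8]{MR3660449} for exactly the identity $\delta^{-1}(u_s\circ F_{s,\delta}^{-1}-u_s)\to-\nabla u_s\,\rho_s$ that you isolate as the delicate point). Your packaging of the two inequalities as a sandwich between $E_\delta(u_0)$ and $E_0(u_\delta)$ is only a notational reorganization of the paper's argument.
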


%


\begin{proof}
To prove~\eqref{e.err}, we compute explicitly the limits
\begin{eqnarray}
&\displaystyle \lim_{\sigma\searrow s} \frac{\En (\Gamma_{\sigma}) - \En(\Gamma_{s})  }{\sigma-s} = \lim_{\delta \searrow 0} \frac{\En (\Gamma_{s+\delta}) - \En(\Gamma_{s})  }{\delta}\,,\label{e.limit1}\\
& \displaystyle \lim_{\sigma\nearrow s} \frac{\En (\Gamma_{\sigma}) - \En(\Gamma_{s})  }{\sigma-s} = \lim_{\delta \nearrow 0} \frac{\En (\Gamma_{s+\delta}) - \En(\Gamma_{s})  }{\delta}\,,\label{e.limit2}
\end{eqnarray}
and show that the two limits coincide.

Let us start with~\eqref{e.limit1}. For every $\delta>0$, the function~$u_{s}\circ F_{s,\delta}^{-1}$ belongs to~$H^{1}(\Om\setminus\Gamma_{s+\delta};\R^{2})$ and~$u_{s}\circ F_{s,\delta}^{-1}=w$ on~$\partial_D\Om$. Hence,
\begin{equation}\label{II-6}
\begin{split}
\frac{\En(\Gamma_{s+\delta}) - \En ( \Gamma_{s} )}{\delta}  \leq & \ \frac{1}{2\delta} \bigg( \int_{\Omega \setminus \Gamma_{s+\delta} } \C \nabla{(u_{s}\circ F_{s,\delta}^{-1})} : \nabla{(u_{s}\circ F_{s,\delta}^{-1})} \, \di x - \int_{\Omega   \setminus \Gamma_{s} } \C \e u_{s} : \e u_{s}\,\di x\bigg) \\
& - \frac{1}{\delta}\int_{\Om} f\cdot (u_{s}\circ F_{s,\delta}^{-1} -  u_{s}) \,\di x \,.
\end{split}
\end{equation}
By a change of coordinate in the first integral in~\eqref{II-6} we deduce that
\begin{equation}\label{II-6.1}
\begin{split}
\frac{\En(\Gamma_{s+\delta}) - \En ( \Gamma_{s} )}{\delta}  \leq & \ \frac{1}{2\delta} \bigg( \int_{\Omega  \setminus \Gamma_{s} } \C (F_{s,\delta}) \nabla u_{s} (\nabla F_{s,\delta})^{-1} : \nabla u_{s} (\nabla F_{s,\delta})^{-1} \,\det \nabla F_{s,\delta} \,\di x - \int_{\Omega  \setminus \Gamma_{s} } \C \e u_{s} : \e u_{s}\,\di x\bigg) \\
&- \frac{1}{\delta}\int_{\Om} f\cdot (u_{s}\circ F_{s,\delta}^{-1} -  u_{s}) \,\di x\,.
\end{split}
\end{equation}
By a simple computation, we can rewrite~\eqref{II-6.1} as
\begin{equation}\label{II.6.2}
\begin{split}
 \frac{\En (\Gamma_{s+\delta}) - \En ( \Gamma_{s} )}{\delta}  \leq & \ \frac{1}{2} \int_{\Omega   \setminus \Gamma_{s} } \frac{(\C (F_{s,\delta}) - \C)}{\delta} \nabla u_{s} (\nabla F_{s,\delta})^{-1} : \nabla u_{s} (\nabla F_{s,\delta})^{-1} \,\det \nabla F_{s,\delta} \,\di x  \\
& + \frac{1}{2} \int_{\Om   \setminus \Gamma_{s} } \C \nabla u_{s} \frac{((\nabla F_{s,\delta})^{-1} - \mathbf{I})}{\delta} : \nabla u_{s} (\nabla F_{s,\delta})^{-1} \det \nabla F_{s,\delta}\,\di x\\
& +\frac{1}{2} \int_{\Omega  \setminus \Gamma_{s} } \C  \nabla u_{s}  : \nabla u_{s}  \frac{((\nabla F_{s,\delta})^{-1} - \mathbf{I})}{\delta} \,\det \nabla F_{s,\delta} \,\di x \\
& + \frac{1}{2}\int_{\Omega  \setminus \Gamma_{s} } \C  \nabla u_{s}  : \nabla u_{s} \,\frac{\det \nabla F_{s,\delta} - 1}{\delta} \,\di x
 - \frac{1}{\delta}\int_{\Om} f\cdot (u_{s}\circ F_{s,\delta}^{-1} -  u_{s}) \,\di x \\
 & =: I_{\delta, 1} + I_{\delta, 2} + I_{\delta, 3} + I_{\delta, 4} + I_{\delta, 5}\,.
\end{split}
\end{equation}
Since
\begin{align*}
& \lim_{\delta \to 0} \frac{(\nabla F_{s,\delta})^{-1} - \mathbf{I}}{\delta} =  \partial_\delta (\nabla F_{s,\delta})^{-1}\Big|_{\delta=0} = - \nabla \rho_s\,,
\\
& \lim_{\delta \to 0}\frac{\det \nabla F_{s,\delta} -1 }{\delta} =  \partial_\delta (\det \nabla F_{s,\delta})\Big|_{\delta=0} =  \div \rho_s\,,
\end{align*}
where the limits are uniform in $\delta$,
we obtain
\begin{eqnarray}
&& \displaystyle \lim_{\delta\searrow0} I_{\delta, 1} = \frac12 \int_{\Omega \setminus \Gamma_{s} } (\mathrm{D}\C \rho_{s}) \nabla{u}_s : \nabla u_s\,\de x\,, \label{II.6.3}\\
&& \displaystyle \lim_{\delta\searrow0} I_{\delta, 2} =\lim_{\delta\searrow0} I_{\delta, 3} = - \frac12 \int_{\Omega  \setminus \Gamma_{s} } \C \nabla u_s \nabla\rho_{s}: \nabla u_s  \de x\,, \label{II.6.4}\\
&& \displaystyle \lim_{\delta\searrow0} I_{\delta, 4} = \frac12 \int_{\Omega  \setminus \Gamma_{s} } \C \nabla u_s : \nabla u_s \div \rho_s\, \de x\,.\label{II.6.5}
\end{eqnarray}
Applying e.g.~\cite[Lemma~3.8]{MR3660449} (see also \cite[Lemma~4.1]{MR2394124}), 
we have that $\delta^{-1}(u_s{\,\circ\,}F_{s,\delta}^{-1}-u_s)\to-\nabla{u_s}\,\rho_{s}$ in~$L^{2}(\Omega)$ as $\delta\to0$. Thus,
\begin{equation}\label{II.6.6}
\lim_{\delta\searrow 0} I_{\delta, 5} = \int_{\Om} f\cdot \nabla u_{s}\,\rho_{s}\,\di x\,.
\end{equation}
Combining~\eqref{II.6.2}-\eqref{II.6.6} we get
\begin{equation}\label{II.6.7}
\begin{split}
\limsup_{\delta\searrow 0} \frac{\En (\Gamma_{s+\delta}) - \En ( \Gamma_{s} )}{\delta}  \leq  &\ \frac12 \int_{\Omega  \setminus \Gamma_{s} } (\mathrm{D}\C \rho_{s}) \nabla{u}_s : \nabla u_s\,\de x\\
& -  \int_{\Omega  \setminus \Gamma_{s}  } \C \nabla u_s \nabla\rho_{s}: \nabla u_s  \de x + \frac12 \int_{\Omega  \setminus \Gamma_{s} } \C \nabla u_s : \nabla u_s \div \rho_s\, \de x\\
& + \int_{\Om} f\cdot \nabla u_{s}\,\rho_{s}\,\di x\,.
\end{split}
\end{equation}

In order to obtain the opposite inequality, we consider the function~$u_{s+\delta}\circ F_{s,\delta}\in H^{1}(\Om\setminus\Gamma_{s};\R^{2})$. By the minimality of~$u_{s}$ we have that
\begin{equation}\label{II.6.7}
\begin{split}
\frac{\En(\Gamma_{s+\delta}) - \En(\Gamma_{s})}{\delta}\geq & \ \frac{1}{2\delta} \bigg( \int_{\Omega  \setminus \Gamma_{s + \delta} } \C \e u_{s+\delta} : \e u_{s+\delta}\,\di x -  \int_{\Omega  \setminus \Gamma_{s} } \C \nabla{(u_{s+\delta}\circ F_{s,\delta})} : \nabla{(u_{s+\delta}\circ F_{s,\delta})} \, \di x\bigg) \\
& - \frac{1}{\delta}\int_{\Om} f\cdot (u_{s+\delta} - u_{s+\delta}\circ F_{s,\delta} ) \,\di x \,.
 \end{split}
\end{equation}
For simplicity of notation, we denote with~$U_{s,\delta}\coloneqq u_{s+\delta}\circ F_{s,\delta}$. By a change of variable in the first integral in~\eqref{II.6.7} and we deduce that
\begin{equation}\label{II.6.7}
\begin{split}
& \frac{\En(\Gamma_{s+\delta}) - \En(\Gamma_{s})}{\delta} \\
&\qquad\qquad \geq  \frac{1}{2\delta} \bigg( \int_{\Omega \setminus \Gamma_s} \C (F_{s,\delta}) \nabla U_{s,\delta} (\nabla F_{s,\delta})^{-1} : \nabla U_{s,\delta} (\nabla F_{s,\delta})^{-1} \det \nabla F_{s,\delta}\,\di x -  \int_{\Omega\setminus \Gamma_s} \C \nabla{U_{s,\delta}} : \nabla{U_{s,\delta}} \, \di x\bigg) \\
& \qquad\qquad\qquad - \frac{1}{\delta}\int_{\Om} f\cdot (u_{s+\delta} - U_{s,\delta} ) \,\di x \,.
 \end{split}
\end{equation}
Repeating the computations of~\eqref{II.6.2}-\eqref{II.6.7} and taking into account that $\delta^{-1}(u_{s+\delta}-U_{s,\delta})\rightharpoonup -\nabla{u}\,\rho_{s}$ weakly in~$L^{2}(\Om;\R^{2})$ (see, e.g.,~\cite[Lemma~3.8]{MR3660449}), we infer that
\[
\begin{split}
\liminf_{\delta\searrow 0} \frac{\En(\Gamma_{s+\delta}) - \En(\Gamma_{s})}{\delta}\geq &\ \frac12 \int_{\Omega \setminus \Gamma_{s} } (\mathrm{D}\C \rho_{s}) \nabla{u}_s : \nabla u_s\,\de x\\
& -  \int_{\Omega \setminus \Gamma_{s} } \C \nabla u_s \nabla\rho_{s}: \nabla u_s  \de x + \frac12 \int_{\Omega\setminus \Gamma_{s}  } \C \nabla u_s : \nabla u_s \div \rho_s\, \de x\\
& + \int_{\Om} f\cdot \nabla u_{s}\,\rho_{s}\,\di x\,,
\end{split}
\]
which, together with~\eqref{II.6.7} implies that
\[
\begin{split}
\lim_{\delta\searrow 0} \frac{\En(\Gamma_{s+\delta}) - \En(\Gamma_{s})}{\delta}= &\ \frac12 \int_{\Omega\setminus \Gamma_{s} } (\mathrm{D}\C \rho_{s}) \nabla{u}_s : \nabla u_s\,\de x\\
& -  \int_{\Omega \setminus \Gamma_{s} } \C \nabla u_s \nabla\rho_{s}: \nabla u_s  \de x + \frac12 \int_{\Omega \setminus \Gamma_{s}  } \C \nabla u_s : \nabla u_s \div \rho_s\, \de x\\
& + \int_{\Om} f\cdot \nabla u_{s}\,\rho_{s}\,\di x\,.
\end{split}
\]

Adapting the above argument to the case~$\delta<0$, cf.\ \eqref{e.limit2}, it is also possible to prove that
\begin{displaymath}
\begin{split}
\lim_{\delta\nearrow 0} \frac{\En(\Gamma_{s+\delta}) - \En(\Gamma_{s})}{\delta}= &\ \frac12 \int_{\Omega \setminus \Gamma_{s} } (\mathrm{D}\C \rho_{s}) \nabla{u}_s : \nabla u_s\,\de x\\
& -  \int_{\Omega \setminus \Gamma_{s}  } \C \nabla u_s \nabla\rho_{s}: \nabla u_s  \de x + \frac12 \int_{\Omega \setminus \Gamma_{s}  } \C \nabla u_s : \nabla u_s \div \rho_s\, \de x\\
& + \int_{\Om} f\cdot \nabla u_{s}\,\rho_{s}\,\di x\,.
\end{split}
\end{displaymath}
This concludes the proof of~\eqref{e.err}.
\end{proof}

The following corollary states the continuity of the derivative~\eqref{e.err} w.r.t.~the data $f$,~$g$,~$w$,~$\C$, and~$\Gamma_{s}$.
%

\begin{corollary}\label{c.continuityERR}
Let $f_{n}, f \in L^{2}(\Om;\R^{2})$, $g_{n}, g\in L^{2}(\partial_{S}\Om;\R^{2})$, $w_{n},w\in H^{1}(\Om\setminus\Gamma_{0};\R^{2})$, and $\C_{n}, \C\in C^{0,1}(\overline{\Om})$ be such that $f_{n}\to f$ strongly in $L^{2}(\Om;\R^{2})$, $g_{n} \rightharpoonup g$ weakly in $L^{2}(\partial_{S}\Om;\R^{2})$, $w_{n} \rightharpoonup w$ weakly in $H^{1}(\Om\setminus\Gamma_{0};\R^{2})$, and $\C_{n} \rightharpoonup \C$ weakly* in $W^{1,\infty}(\Om)$. Moreover, let $S>0$, let $\{\Gamma_{s}\}_{s\in[0,S]} \subseteq \mathcal{R}^{0,1}_{\eta}$ be as in~\eqref{crack}, and assume that there exists a sequence $\{\Gamma^{n}_{s}\}_{s\in[0,S]} \subseteq \mathcal{R}^{0,1}_{\eta}$ such that~$\Gamma^{n}_{s}$ converges to~$\Gamma_{s}$ in the Hausdorff metric of sets for every $s\in[0,S]$. Then, for every $s\in(0,S)$ we have
\begin{equation}\label{e.continuityERR}
\lim_{n}\, \frac{\de\En (f_{n}, g_{n}, w_{n},\C_{n};\Gamma^{n}_{\sigma})}{\de \sigma} \bigg|_{\sigma = s} = \frac{\de\En (f, g, w, \C;\Gamma_{\sigma} )}{\de \sigma} \bigg|_{\sigma = s} \,.
\end{equation}
\end{corollary}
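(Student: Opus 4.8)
The plan is to pass to the limit in the explicit formula \eqref{e.err} for $\frac{\de \En(\Gamma_\sigma)}{\de\sigma}\big|_{\sigma=s}$, term by term. The right-hand side of \eqref{e.err} involves the minimizer $u_s^n := u(f_n,g_n,w_n,\C_n;\Gamma_\sigma^n)\big|_{\sigma=s}$ and the vector field $\rho_s^n$ associated to the curve $\Gamma_\sigma^n$, and our task is to show each of the four integrals converges to the corresponding integral with $u_s$ and $\rho_s$. The two main ingredients are: first, strong $L^2$ convergence of the gradients $\nabla u_s^n \to \nabla u_s$, and second, strong convergence $\rho_s^n \to \rho_s$ in a suitable topology (say $C^0$, or $L^\infty$ on $\Omega$, together with $\nabla\rho_s^n \to \nabla\rho_s$ strongly in $L^2$). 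The first ingredient is essentially Lemma~\ref{l.continuity}: one applies it with the given data $f_n,g_n,w_n,\C_n$ and cracks $\Gamma_s^n$, after checking that weak-$*$ convergence of $\C_n$ in $W^{1,\infty}$ (which is what is assumed in the corollary) together with the compact embedding into $C^0(\overline\Omega)$ gives the uniform convergence $\C_n\to\C$ required by the lemma, and similarly that weak $H^1$ convergence of $w_n$ suffices since only $\nabla w_n \rightharpoonup \nabla w$ and $w_n\to w$ strongly in $L^2$ (and in trace) are actually used in the proof of Lemma~\ref{l.continuity}. (In fact one should double-check that the lemma's proof survives with $w_n\rightharpoonup w$ rather than strongly; the coercivity bound \eqref{e.6} and the identification of the limit only need the weak convergence, and the final strong convergence of $\nabla u_n$ is obtained from $\e u_n \to \e u_s$ strongly plus Korn — but the term $\nabla w_n$ enters, so one may need to argue a bit more carefully or simply restrict to the strong-$H^1$ case which is all that is needed in the applications.)

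Next I would establish the convergence of $\rho_s^n$. Recall from \eqref{someDer} that $\rho_s = \gamma_1'(s)\,\varphi\,(1,\zeta')^\top$ where $\zeta$ is the $C^{1,1}$ function whose graph is $\Gamma_\sigma$ near $\gamma(s)$ and $\varphi$ is a fixed cut-off. For the sequence, $\Gamma_\sigma^n$ near its point $\gamma^n(s)$ is a graph of $\zeta^n$, and by the uniform $C^{1,1}$ bound on the elements of $\RR_\eta^{0,1}$ (curvature bounded by $\eta^{-1}$) together with Hausdorff convergence $\Gamma_s^n\to\Gamma_s$, we get $\gamma^n\to\gamma$ in $C^1$ (indeed $C^{1,\alpha}$ for any $\alpha<1$, weak-$*$ in $W^{2,\infty}$), hence $\zeta^n\to\zeta$ in $C^1$ and $(\zeta^n)'\to\zeta'$ uniformly with uniformly bounded Lipschitz constants; this is exactly the kind of compactness already exploited in the proof of Lemma~\ref{l.continuity} for the rescaled parametrizations $\hat\gamma_n$. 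Consequently $\rho_s^n\to\rho_s$ uniformly on $\Omega$ and $\nabla\rho_s^n\rightharpoonup\nabla\rho_s$ weakly-$*$ in $L^\infty$; moreover, since $\zeta^n\to\zeta$ in $C^1$, $(\zeta^n)''$ is bounded in $L^\infty$ and the derivatives $\nabla\rho_s^n$ are bounded in $L^\infty$ — weak-$*$ convergence is enough because it will be paired against a strongly $L^1$-convergent quantity. One subtlety: the cut-off $\varphi$ and the radius $r$ a priori depend on the limit curve; since all the $\Gamma_s^n$ eventually lie in a neighborhood of $\Gamma_s$ where the graph representation is uniform, one fixes $r$ and $\varphi$ once and for all for the limit curve and uses them for all large $n$, which is legitimate because the construction of $F_{s,\delta}$ only requires that $\Gamma_\sigma$ be a graph in $\B_r(\gamma(s))$ for $|\sigma-s|$ small, and this holds uniformly in $n$.

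With these two convergences in hand, the passage to the limit in each of the four terms of \eqref{e.err} is routine: in the first term, $(\mathrm{D}\C_n\,\rho_s^n)\to(\mathrm{D}\C\,\rho_s)$ weakly-$*$ in $L^\infty$ (product of $\mathrm{D}\C_n\rightharpoonup \mathrm{D}\C$ weak-$*$ with $\rho_s^n\to\rho_s$ strongly uniformly) paired against $\nabla u_s^n\otimes\nabla u_s^n \to \nabla u_s\otimes\nabla u_s$ strongly in $L^1$; in the second term $\C_n\to\C$ uniformly, $\nabla\rho_s^n\rightharpoonup\nabla\rho_s$ weak-$*$ $L^\infty$, and $\nabla u_s^n\otimes\nabla u_s^n\to\nabla u_s\otimes\nabla u_s$ in $L^1$, and likewise for the third; in the fourth term $f_n\to f$ strongly in $L^2$, $\nabla u_s^n\to\nabla u_s$ strongly in $L^2$, and $\rho_s^n\to\rho_s$ uniformly, so the product converges. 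In each case one uses the elementary fact that a weak-$*$ $L^\infty$ sequence times a strong $L^1$ sequence converges to the product. The main obstacle is really the bookkeeping around $\rho_s^n$: making rigorous that the graph representation, the cut-off, and hence the field $\rho_s^n$ can be chosen uniformly in $n$ near the tip, and extracting from the uniform curvature bound the $C^1$-convergence $\gamma^n\to\gamma$ with control on the second derivatives so that $\nabla\rho_s^n$ stays bounded in $L^\infty$. Everything else is a direct consequence of Lemma~\ref{l.continuity} and standard weak-strong pairing arguments.
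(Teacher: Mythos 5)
Your proposal is correct and follows essentially the same route as the paper: apply Lemma~\ref{l.continuity} to get $\nabla u_s^n\to\nabla u_s$ strongly in $L^2$, show $\rho_s^n\to\rho_s$ uniformly with $\nabla\rho_s^n$ converging weakly* in $L^\infty$, and pass to the limit term by term in \eqref{e.err} via weak--strong pairing. The paper's proof is just a terser version of this; your extra care about the hypotheses of Lemma~\ref{l.continuity} (weak* $W^{1,\infty}$ convergence of $\C_n$ implying uniform convergence, and the weak versus strong convergence of $w_n$) and about the uniform choice of the graph representation and cut-off near the tip fills in details the paper leaves implicit.
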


\begin{proof}
Let us denote with $u_{n}$ and~$u$ the displacements associated to~$\En(f_{n}, g_{n}, w_{n},\C_{n};\Gamma^{n}_{s})$ and to~$\En(f, g, w, \C;\Gamma_{s})$, respectively. By Lemma~\ref{l.continuity} and by the hypotheses, it follows that~$\nabla u_{n}$ converges to~$\nabla u$ strongly in~$L^{2}(\Om;\mathbb{M}^{2})$. Let us denote by $\rho^{n}_{s}$ the quantity defined as in~\eqref{someDer} and corresponding to $\Gamma^{n}_{s}$. Since $\Gamma_{s}^{n}$ converges to~$\Gamma_{s}$ in the Hausdorff metric of sets for every~$s\in[0,S]$, we have that~$\rho^{n}_{s} \to \rho_{s}$ uniformly in~$\Om$  and weakly* in $W^{1,\infty}(\Om;\R^{2})$ for every $s\in(0,S)$.  Thus~\eqref{e.continuityERR} follows by \eqref{e.err}.
\end{proof}

We notice that the dependence of~$\frac{\di \En (\Gamma_{\sigma})}{\di \sigma} \big|_{\sigma = s}$ on~$\{\Gamma_{\sigma}\}_{\sigma\in[0,S]}$ is encoded by the quantity~$\rho_{s}$ introduced in~\eqref{someDer}. The rest of this section is devoted to step {\it (ii)}, namely at proving that the above derivative only depends on the current fracture~$\Gamma_{s}$, and not on its possible extensions, i.e., on the choice of the family~$\{\Gamma_{\sigma}\}_{\sigma \in[0,S]}$. We start by recalling a result of~\cite{MR3844481} stating that this very same property holds for~$C^{\infty}$ cracks in absence of external forces and with constant elasticity tensor.

\begin{theorem}[{\cite[Theorem 4.1]{MR3844481}}]\label{t.AL18}
Let $f=g=0$ and let~$\C$ be constant in~$\Om$. Let~$\{\Gamma_{\sigma}\}_{\sigma\in[0,S]} \subseteq \mathcal{R}^{0,1}_{\eta}$ be as in~\eqref{crack} and assume that there exists $s \in (0, S)$ such that~$\Gamma_{\sigma}$ is of class~$C^{\infty}$ for every $\sigma \in(0,\bar{s}]$. Then, for every~$s\in(0,\bar{s}]$ there exist two constants~$Q_1(\Gamma_s)$ and~$Q_2(\Gamma_s)$ (independent of~$\Gamma_{\sigma}$ for $\sigma > s$) such that
\begin{equation}\label{e.AL18}
\frac{\de \En (\Gamma_{\sigma})}{\de \sigma} \bigg|_{\sigma = s} = C(\lambda,\mu) (Q_1^2(\Gamma_s) + Q_2^2(\Gamma_s)) \,,
\end{equation}
where~$C(\lambda,\mu)$ is a constant which depends only on the Lam\'e coefficients~$\lambda$ and~$\mu$.
\end{theorem}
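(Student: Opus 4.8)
The plan is to reduce the derivative in~\eqref{e.err}---which here, since $f=g=0$ and $\C$ is constant, consists only of its second and third terms---to a path-independent contour integral around the crack tip $\gamma(s)$, and then to invoke the classical expansion of the elastic field near a crack tip; identity~\eqref{e.AL18} is then the Irwin relation (see~\cite{MR3844481} for full details). First, I would rewrite
\[
\frac{\de \En(\Gamma_{\sigma})}{\de \sigma}\bigg|_{\sigma=s} = \int_{\Om\setminus\Gamma_s} b_s:\nabla\rho_s\,\de x\,, \qquad b_s := \tfrac12\,(\C\nabla u_s:\nabla u_s)\,\mathbf{I} - (\nabla u_s)^{T}\,\C\nabla u_s\,,
\]
where $b_s$ is Eshelby's energy--momentum tensor built from $u_s$. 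Away from $\gamma(s)$ the displacement $u_s$ is smooth and solves the homogeneous Lam\'e system, so a short computation (using that $\C$ is constant) gives $\partial_l (b_s)_{kl}=0$ there; moreover $\rho_s$ is, by construction, tangent to $\Gamma_s$ along the crack, and the traction $\C\nabla u_s\,\nu$ vanishes on the crack lips, whence $(b_s\,\nu)\cdot\rho_s=0$ there. Excising a small ball $\B_\delta(\gamma(s))$ and integrating by parts on the annulus, I obtain
\[
\frac{\de \En(\Gamma_{\sigma})}{\de \sigma}\bigg|_{\sigma=s} = \int_{\B_\delta(\gamma(s))\setminus\Gamma_s} b_s:\nabla\rho_s\,\de x - \int_{\partial\B_\delta(\gamma(s))} (b_s\,n)\cdot\rho_s\,\de\Haus\,,
\]
and, letting $\delta\to0$, the bulk term tends to $0$ since $\nabla u_s\in L^2$, while $\rho_s\to\gamma'(s)$ uniformly on $\partial\B_\delta(\gamma(s))$ (near the tip $\rho_s$ coincides with $\gamma_1'(s)(1,\zeta')$ and $\zeta'$ vanishes at $\gamma_1(s)$). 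Hence the derivative equals a path-independent $J$-integral depending only on $u_s$ near $\gamma(s)$; in particular it is independent of the extension $\{\Gamma_\sigma\}_{\sigma>s}$, which is the extension-independence part of the statement.

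Next I would carry out the singularity analysis at the tip. Straightening $\Gamma_s$ near $\gamma(s)$ by a $C^\infty$ diffeomorphism turns the equilibrium system into a second-order elliptic system with smooth coefficients on a slit neighbourhood; by the Kondrat'ev--Grisvard theory of corner singularities for elliptic systems---for the Lam\'e operator at a slit the leading exponent is $1/2$, with a two-dimensional space of singular functions---one obtains near the tip
\[
u_s = u_R + Q_1(\Gamma_s)\,S^{\mathrm{I}} + Q_2(\Gamma_s)\,S^{\mathrm{II}}\,,
\]
with $u_R\in H^2$ in a neighbourhood of $\gamma(s)$ and $S^{\mathrm{I}}, S^{\mathrm{II}}$ the explicit mode-I and mode-II homogeneous singular fields, which behave like $r^{1/2}$. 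The coefficients $Q_1,Q_2$ are extracted from $u_s$ by a duality pairing with the corresponding dual singular functions, so they depend only on $\Gamma_s$ (the data are fixed and $w$, $f$, $g$ vanish near the tip).

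Finally I would insert this decomposition into the $J$-integral and pass to the limit $\delta\to0$ along a suitable sequence $\delta_n\to0$. The $u_R$--$u_R$ and $u_R$--$S$ contributions are controlled by $\int_{\partial\B_{\delta_n}}|\nabla u_R|^2$ and $\big(\int_{\partial\B_{\delta_n}}|\nabla u_R|^2\big)^{1/2}$ via the Cauchy--Schwarz inequality on the circle, hence vanish because $\nabla u_R\in L^2$ near $\gamma(s)$. Each remaining contribution is scale-invariant---homogeneous of degree $-1$ in $r$ against the length element---hence independent of $\delta_n$: the $S^{\mathrm{I}}$--$S^{\mathrm{II}}$ cross term integrates to zero over $[0,2\pi]$ because the mode-I and mode-II angular profiles have opposite parities with respect to the crack line, while the two diagonal terms yield $C(\lambda,\mu)\,Q_1^2$ and $C(\lambda,\mu)\,Q_2^2$ with the two diagonal constants coinciding (as in Irwin's formula), after an explicit computation with the plane-elasticity singular fields. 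Summing up, this gives~\eqref{e.AL18}.

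I expect the main difficulty to be the rigorous corner-singularity decomposition for the Lam\'e system---in particular, establishing that the remainder $u_R$ truly belongs to $H^2$ and that the stress intensity factors $Q_1, Q_2$ are well defined and stable under the extraction procedure---together with the careful justification of the limit in the $J$-integral when $u_s$ is only $H^1$ globally and classical merely away from the tip, which is handled on annuli where $\partial_l(b_s)_{kl}=0$ holds pointwise and the crack lips are traction-free. The algebraic identification of the two diagonal constants is elementary but must be verified by hand.
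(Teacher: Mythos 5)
This theorem is quoted verbatim from \cite[Theorem 4.1]{MR3844481} and the paper offers no proof of its own, so there is no internal argument to compare against; your sketch --- reduction of \eqref{e.err} (with $f=g=0$, $\C$ constant) to a path-independent Eshelby/$J$-integral around the tip, the corner-singularity decomposition $u_s=u_R+Q_1\Phi_1+Q_2\Phi_2$ with $u_R\in H^2$, and the Irwin-type computation on circles $\partial\B_{\delta_n}(\gamma(s))$ --- is exactly the strategy the paper attributes to that reference in Remark \ref{r.AL18}. As a sketch it is sound, with the genuinely hard step being, as you say, the rigorous $H^2$-plus-singular-part decomposition, which is the content of \cite[Theorem 2.5]{MR3844481}.
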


\begin{remark}\label{r.AL18}
The constants~$Q_1(\Gamma_s)$ and~$Q_2(\Gamma_s)$ are the so called \emph{stress intensity factors}. Indeed, it has been proven in~\cite[Theorem~2.5]{MR3844481} that, in the condition of~Theorem~\ref{t.AL18},  the displacement~$u_{s}$ can be written as
\begin{equation}\label{e.AL18-2}
u_{s}= u_{R} + Q_1(\Gamma_s) \Phi_{1} + Q_2(\Gamma_s) \Phi_{2}\,,
\end{equation}
for suitable functions $u_{R} \in H^{2}(\Om \setminus \Gamma_{s}; \R^{2} )$ and $\Phi_{1},\Phi_{2}\in H^{1} ( \Om \setminus \Gamma_{s}; \R^{2} ) \setminus H^{2}(\Om\setminus\Gamma_{s} ;\R^{2})$. Moreover, the proof of formula~\eqref{e.AL18} follows from the above decomposition.
\end{remark}

The next proposition is a simple localization of Theorem~\ref{t.AL18}. 

\begin{proposition}\label{p.localized}
Let $\{\Gamma_{s}\}_{s\in [0,S]} \subseteq \mathcal{R}^{0,1}_{\eta}$ be as in~\eqref{crack}. Let~$ s \in(0,S)$, $f\in L^{2}(\Om;\R^{2})$, $g\in L^{2}(\partial_{S}\Om;\R^{2})$, $w\in H^{1}(\Om\setminus\Gamma_{0};\R^{2})$, and $\C\in C^{0,1}(\overline{\Om})$ be such that $\Gamma_{s}$ is~$C^{\infty}$, $f=0$, and $\C$ is constant in a neighborhood of the tip~$\gamma(s)$ of~$\Gamma_{s}$. Then, there exist two constants~$Q_1(\Gamma_{s})$ and~$Q_2(\Gamma_{s})$ (independent of~$\Gamma_{\sigma}$ for $\sigma  >   s$) such that
\begin{equation}\label{e.AL18-3}
\frac{\de \En(\Gamma_{\sigma } ) }{\de \sigma} \bigg|_{\sigma = s} = C(\lambda_{s},\mu_{s}) (Q_1^2(\Gamma_{s}) + Q_2^2(\Gamma_{s})) \,,
\end{equation}
where~$C(\lambda_{s},\mu_{s})$ coincides with the constant appearing in~\eqref{e.AL18} and~$\lambda_{s},\mu_{s}$ denote the Lam\'e coefficients of~$\C$ in~$\gamma(s)$.
\end{proposition}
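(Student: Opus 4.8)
The plan is to reduce the localized statement to the global Theorem \ref{t.AL18} by a cut-off / decomposition argument that splits the displacement into a singular part (governed by the locally constant tensor and zero force near the tip) and a regular remainder whose contribution to the derivative \eqref{e.err} is stable under crack elongation. First I would fix a ball $\B_{r}(\gamma(s))$ on which $f\equiv0$, $\C\equiv\C_{s}$ is constant, and $\Gamma_{\sigma}$ is $C^{\infty}$ for $\sigma$ near $s$, and I would choose $r$ small enough that $\spt(w)\cap \B_{r}(\gamma(s))=\emptyset$ and that the diffeomorphism $F_{s,\delta}$ built above is supported in $\B_{r/2}(\gamma(s))$. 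The key observation is that the integrand of the right-hand side of \eqref{e.err} is supported in $\spt(\rho_{s})\subseteq \B_{r/2}(\gamma(s))$, since $\rho_{s}=\gamma_{1}'(s)\varphi(x)(1,\zeta'(x_{1}))$ vanishes outside $\spt(\varphi)$; hence the whole derivative only sees $u_{s}$ and $\C$ on that small ball, where $\C$ is constant and $f=0$.

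Next I would produce the local singular expansion. On $\B_{r}(\gamma(s))\setminus\Gamma_{s}$ the displacement $u_{s}$ solves the homogeneous constant-coefficient Lamé system with traction-free conditions on the crack and no body force; by interior elliptic regularity away from the tip together with the classical expansion near a crack tip (as in \cite[Theorem~2.5]{MR3844481}, applied on the ball), one gets $u_{s}=u_{R}+Q_{1}(\Gamma_{s})\Phi_{1}+Q_{2}(\Gamma_{s})\Phi_{2}$ on $\B_{r}(\gamma(s))\setminus\Gamma_{s}$, with $u_{R}\in H^{2}$ and $\Phi_{1},\Phi_{2}$ the universal singular profiles associated with $\C_{s}$; the coefficients $Q_{1}(\Gamma_{s}),Q_{2}(\Gamma_{s})$ are intrinsic to $\Gamma_{s}$ near the tip. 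Substituting this decomposition into \eqref{e.err} and using that $\rho_{s}$ is supported near the tip, all cross terms and the $u_{R}$–$u_{R}$ term give contributions that depend only on $u_{s}$ restricted to the ball — hence only on $\Gamma_{s}$ — while the $\Phi_{i}$–$\Phi_{j}$ terms reproduce exactly the algebraic identity that in the proof of \cite[Theorem~4.1]{MR3844481} yields $C(\lambda_{s},\mu_{s})(Q_{1}^{2}+Q_{2}^{2})$. Thus \eqref{e.err} collapses to the right-hand side of \eqref{e.AL18-3}, and in particular it is independent of the extension $\{\Gamma_{\sigma}\}_{\sigma>s}$.

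An alternative, cleaner route — which I would probably prefer to present — is to compare $\En(f,g,w,\C;\Gamma_{\sigma})$ with $\En(0,0,0,\C_{s};\Gamma_{\sigma})$ directly. Let $v_{s}$ be the displacement for the purely local problem on $\Om\setminus\Gamma_{s}$ with a fixed boundary datum chosen to agree with $u_{s}$ on $\partial\B_{r}(\gamma(s))$. Since $f=0$ and $\C=\C_{s}$ on $\B_{r}(\gamma(s))$, the difference $u_{s}-v_{s}$ solves an elliptic system that is nonsingular near the tip, hence $u_{s}-v_{s}\in H^{2}(\B_{r/2}(\gamma(s))\setminus\Gamma_{s})$, and plugging $u_{s}=v_{s}+(u_{s}-v_{s})$ into \eqref{e.err} one checks, using $\spt(\rho_{s})\subseteq\B_{r/2}(\gamma(s))$ and $H^{2}$-regularity of the remainder, that $\frac{\de}{\de\sigma}\En(\Gamma_{\sigma})|_{\sigma=s}$ differs from $\frac{\de}{\de\sigma}\En(0,0,0,\C_{s};\Gamma_{\sigma})|_{\sigma=s}$ only by terms that are manifestly functions of $\Gamma_{s}$ alone; the latter equals $C(\lambda_{s},\mu_{s})(Q_{1}^{2}(\Gamma_{s})+Q_{2}^{2}(\Gamma_{s}))$ by Theorem~\ref{t.AL18}.

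The main obstacle is controlling the remainder terms in \eqref{e.err} involving $\nabla\rho_{s}$ and $\div\rho_{s}$: these carry derivatives of the cut-off $\varphi$ and of $\zeta$, and one must make sure that the pairing of the (only $H^{1}$, not $H^{2}$) singular functions $\Phi_{i}$ against $\nabla\rho_{s}$ still makes sense and produces exactly the universal constant, with no leftover dependence on $\Gamma_{\sigma}$ for $\sigma>s$. This is precisely the computation carried out in \cite{MR3844481} for $C^{\infty}$ cracks on the whole domain; the point of the localization is that, because $\rho_{s}$ lives in a neighborhood of the tip where the data are exactly of the type treated there, that computation applies verbatim once the $H^{2}$ remainder has been peeled off. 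Making the peeling rigorous — i.e. justifying the $H^{2}$-regularity of $u_{s}-v_{s}$ up to the crack near the tip and the resulting cancellations in \eqref{e.err} — is the technical heart of the argument.
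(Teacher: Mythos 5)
Your first route is essentially the paper's proof: the paper observes that $u_{s}$ restricted to a small ball $\B_{\ell}(\gamma(s))$ is itself the minimizer of the localized elastic problem with its own trace as Dirichlet datum, applies \cite[Theorem~2.5]{MR3844481} on that ball to obtain the decomposition $u_{s}=u_{R}+Q_{1}(\Gamma_{s})\Phi_{1}+Q_{2}(\Gamma_{s})\Phi_{2}$ near the tip, and then repeats step by step the computation of \cite[Theorem~4.1]{MR3844481}, exactly as you describe; your observation that every integrand in \eqref{e.err} carries a factor of $\rho_{s}$, $\nabla\rho_{s}$, or $\div\rho_{s}$ and hence is supported in $\spt(\varphi)\subseteq\B_{r/2}(\gamma(s))$ is the correct justification that the localization loses nothing. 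The alternative comparison argument you sketch is not the one in the paper, but the route you would lead with coincides with the published proof.
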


\begin{proof}
As mentioned in Remark~\ref{r.AL18}, the proof of formula~\eqref{e.AL18-3} follows directly from a splitting of the form~\eqref{e.AL18-2} for the displacement~$u_{s}$ solution of
\begin{displaymath}
\min\, \left \{ \frac12 \int_{\Om\setminus\Gamma_{s}}\!\!\!\!\!\!\! \C\e u: \e u\,\de x -\int_{\Om\setminus\Gamma_{s}}\!\!\!\!\!\! f\cdot u \,\de x - \int_{\partial_{S}\Om}\!\!\!\! g\cdot u\,\de \Haus :\, u\in H^{1}(\Om\setminus\Gamma_{s}; \R^{2} ) , \, u=w \text{ on $\partial_{D}\Om$}\right \} .
\end{displaymath}
close to the tip~$\gamma(s)$ of~$\Gamma_{s}$. Indeed, given~\eqref{e.AL18-2}, we can simply repeat step by step the proof of~\cite[Theorem~4.1]{MR3844481} and get~\eqref{e.AL18-3}. In order to obtain such a decomposition in a neighborhood of~$\gamma(s)$, we note that~$u_{s}$ is also solution of
\begin{displaymath}
\min\,\left\{  \frac12 \int_{\B_{\ell}(\gamma(s)) \setminus \Gamma_{s}} \!\!\!\!\!\!\!\! \!\!\!\!\!\!\!\!\!\!\!\!\!\! \C\e u: \e u\,\de x :\, u\in H^{1}(\B_{\ell}(\gamma(s)) \setminus \Gamma_{s}; \R^{2} ) , \, u=u_{s} \text{ on $\partial \B_{\ell}(\gamma(s)) $ } \right\}
\end{displaymath}
with~$\ell$ chosen in such a way that~$\Gamma_{s}$ is smooth, $f=0$, and~$\C$ is constant in~$\B_{\ell}(\gamma(s))$. This enables us to apply~\cite[Theorem~2.5]{MR3844481} on the domain~$\B_{\ell}(\gamma(s))$ and to deduce the decomposition~\eqref{e.AL18-3} on~$\B_{\ell}(\gamma(s))$.
\end{proof}

We are now in a position to state and prove the main result of this section.

\begin{theorem}\label{t.ERR}
Let~$\{\Gamma_\sigma\}_{\sigma\in[0,S]}, \{\hat{\Gamma}_{\sigma}\}_{\sigma\in[0,S]}  \subseteq \mathcal{R}^{0,1}_{\eta}$ be as in~\eqref{crack}. Let $f\in L^2(\Omega;\mathbb R^2)$, $g\in L^{2}(\partial_{S}\Om;\R^{2})$, $w\in H^{1}(\Om\setminus\Gamma_{0};\R^{2})$, and $\C\in C^{0,1}(\Om)$. Let $s\in(0,S)$ and assume that $\Gamma_{\sigma} = \hat{\Gamma}_{\sigma}$ for $\sigma \leq s$. Then, 
\begin{equation}\label{e:finalERR}
\frac{\de \En (\Gamma_{\sigma})}{\de\sigma} \bigg|_{\sigma = s} = \frac{\de \En (\hat \Gamma_{\sigma})}{\de\sigma} \bigg|_{\sigma = s}.
\end{equation}
\end{theorem}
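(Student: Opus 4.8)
The plan is to deduce the statement from the smooth‑crack, force‑free, constant‑tensor case of Proposition~\ref{p.localized} by two successive approximations, and then to pass to the limit by means of the continuity of the derivative \eqref{e.err}.

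\emph{Step 1 (forces and tensor).} Since $g$ is a datum on $\partial_S\Om$ and $w$ enters only through the Dirichlet condition, the only data that appear in the hypotheses of Proposition~\ref{p.localized} near the tip are $f$ and $\C$. I would therefore fix cut‑off functions $\chi_k\in C^\infty_c(\B_{2/k}(\gamma(s)))$ with $\chi_k\equiv1$ on $\B_{1/k}(\gamma(s))$, and set $f_k:=(1-\chi_k)f$, so that $f_k\equiv0$ on $\B_{1/k}(\gamma(s))$ and $f_k\to f$ in $L^2(\Om;\R^2)$; likewise I would replace $\C$ by a Lipschitz tensor field $\C_k$ that equals the constant $\C(\gamma(s))$ on $\B_{1/k}(\gamma(s))$, equals $\C$ outside $\B_{2/k}(\gamma(s))$, and interpolates in between, so that (using $\C\in C^{0,1}$) one has $\C_k\to\C$ uniformly with $\|\nabla\C_k\|_\infty$ bounded, hence $\C_k\rightharpoonup\C$ weakly* in $W^{1,\infty}(\Om)$; the structure of $\C_k$ is preserved, so (H3) still holds. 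Keeping the two crack families unchanged, Corollary~\ref{c.continuityERR} shows that the left‑ and right‑hand sides of \eqref{e:finalERR} computed with $(f_k,g,w,\C_k)$ converge to those computed with $(f,g,w,\C)$. Hence it is enough to prove \eqref{e:finalERR} under the additional assumptions that $f\equiv0$ and $\C$ is constant on a fixed ball $\B_{r_0}(\gamma(s))$.

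\emph{Step 2 (crack families).} Under the assumptions of Step~1, I would approximate $\{\Gamma_\sigma\}$ and $\{\hat\Gamma_\sigma\}$ by families $\{\Gamma^n_\sigma\}$, $\{\hat\Gamma^n_\sigma\}$ of curves that are of class $C^\infty$ in a neighbourhood of their tips, obtained by mollifying the arc‑length parametrizations. The key point is to smooth the \emph{shared} portion of the two parametrizations once, so that the two smoothed families still coincide up to a parameter $s_n\to s$, with $\Gamma^n_{s_n}=\hat\Gamma^n_{s_n}$ of class $C^\infty$ near its tip $\gamma^n(s_n)$, and, if convenient, to leave the curves unchanged near $\partial\Om$. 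Choosing the mollification fine enough, the second derivatives of the arc‑length parametrizations stay bounded, no self‑intersection is created, and the smoothed curves belong to $\mathcal{R}^{0,1}_{\eta'}$ for some $\eta'\in(0,\eta]$; note that Propositions~\ref{prop-err} and~\ref{p.localized} and Corollary~\ref{c.continuityERR} hold verbatim with $\eta'$ in place of $\eta$. Moreover $\Gamma^n_\sigma\to\Gamma_\sigma$ and $\hat\Gamma^n_\sigma\to\hat\Gamma_\sigma$ in the Hausdorff metric for every $\sigma$, and since $\gamma^n(s_n)\to\gamma(s)$, for $n$ large the common tip lies in $\B_{r_0}(\gamma(s))$, where $f\equiv0$ and $\C$ is constant. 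This construction is the delicate point: one must smooth two distinct extensions of a single $C^{1,1}$ curve while simultaneously (i) preserving that the smoothed families coincide up to $s_n\to s$, (ii) keeping them admissible (curvature bound, no self‑intersection, at most one boundary endpoint), (iii) retaining Hausdorff convergence, and (iv) ensuring the moving tips eventually enter $\B_{r_0}(\gamma(s))$ so that Proposition~\ref{p.localized} becomes applicable.

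\emph{Steps 3--4 (conclusion and limit).} For $n$ large, Proposition~\ref{p.localized} applies to both smoothed families at $\sigma=s_n$ and gives
\[
\frac{\de\En(\Gamma^n_\sigma)}{\de\sigma}\bigg|_{\sigma=s_n}=C(\lambda_s,\mu_s)\big(Q_1^2(\Gamma^n_{s_n})+Q_2^2(\Gamma^n_{s_n})\big)=\frac{\de\En(\hat\Gamma^n_\sigma)}{\de\sigma}\bigg|_{\sigma=s_n},
\]
where the middle term is the same for both families because $\Gamma^n_{s_n}=\hat\Gamma^n_{s_n}$, and $\lambda_s,\mu_s$ are the (now constant) Lamé coefficients of $\C$ at $\gamma(s)$. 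Finally I would let $n\to\infty$: by \eqref{e.err} the derivative at the tip depends on a family only through the solution $u^n_{s_n}$ of \eqref{pb:min} with crack $\Gamma^n_{s_n}$ and through the field $\rho^n_{s_n}$ of \eqref{someDer}; since $\Gamma^n_{s_n}\to\Gamma_s$ in the Hausdorff metric, Lemma~\ref{l.continuity} yields $\nabla u^n_{s_n}\to\nabla u_s$ strongly in $L^2(\Om;\M^{2})$, while $\rho^n_{s_n}\to\rho_s$ uniformly and weakly* in $W^{1,\infty}$ because the local geometry at the tip converges. Therefore both sides of the last display converge to the two sides of \eqref{e:finalERR}, which concludes the proof.
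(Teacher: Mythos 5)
Your overall architecture coincides with the paper's: reduce to the case where, near the tip, the crack is smooth, $f$ vanishes and $\C$ is constant; apply the localized stress--intensity--factor identity of Proposition~\ref{p.localized} to the two (now coinciding up to the tip) approximating families; and pass to the limit via the continuity of the derivative \eqref{e.err}. Steps~1, 3 and 4 are fine (the small deviation of evaluating at $s_n\to s$ rather than at $s$ is harmless, since you correctly fall back on \eqref{e.err} and Lemma~\ref{l.continuity} instead of quoting Corollary~\ref{c.continuityERR} verbatim). The gap is Step~2, which is precisely the content of the paper's Lemma~\ref{l.approximation} and is the main technical point of the whole theorem: you state what the approximating families must satisfy and propose mollification of the parametrizations, but you do not carry out the construction, and the obstacles you yourself list are real. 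Mollifying an arc-length parametrization destroys the arc-length property (so the ``coincidence up to $s_n$'' must be re-read after reparametrization), perturbs the endpoint on $\partial\Om$ (threatening condition (a) of Definition~\ref{defin:adcr}), and --- most seriously --- a bound on $|\gamma''|$ alone does not give the two-tangent-ball condition \eqref{doppia-palla}, which is a global non-self-approach property; recovering it for the mollified curve, or performing a $C^{1,1}$ gluing between a mollified piece near the tip and the untouched remainder, requires an argument you have not supplied.

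The paper avoids all of this with a different, purely geometric construction: it truncates the common curve at $s_n\nearrow s$, prolongs $\Gamma_{s_n}$ by the arc of one of the two tangent circles of radius $\eta$ at $\gamma(s_n)$ guaranteed by \eqref{doppia-palla} (up to the point where the tangent direction matches $\gamma'(s)$), and then by a straight segment; the curvature bound $|\psi_n''|\,[1+(\psi_n')^2]^{-3/2}\le\eta^{-1}$ yields $\Haus(\Lambda^n)\le s$, so the segment can be chosen to restore total length exactly $s$, after which a translated copy of $\Gamma_\sigma\setminus\Gamma_s$ (resp.\ $\hat\Gamma_\sigma\setminus\hat\Gamma_s$) is appended for $\sigma>s$. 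This keeps the approximants in the \emph{same} class $\mathcal{R}^{0,1}_{\eta}$, makes them $C^\infty$ (circle arc plus segment) near the tip, leaves them untouched near $\partial\Om$, and makes the two families literally identical for $\sigma\le s$ --- exactly the properties your mollification sketch would still have to establish. As written, your proof is therefore incomplete at its central step; either carry out the mollification with the verifications above, or replace Step~2 by a construction of this explicit type.
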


\begin{remark}
The previous theorem states that the derivative~$\frac{\de \En (\Gamma_{\sigma})}{\de \sigma} \big|_{\sigma = s}$ computed in Proposition~\ref{prop-err} does not depend on the possible extension of~$\Gamma_{s}$ in the class~$ \mathcal{R}^{0,1}_{\eta}$. Hence, it represents the slope of the energy~$\En$ with respect to variations of crack in the set of admissible curves~$\mathcal{R}^{0,1}_{\eta}$. 
\end{remark}

The proof of Theorem~\ref{t.ERR} is a corollary of the following lemma, where we use an approximation argument to reduce ourselves to the case of smooth cracks, constant elasticity tensor, and forces that are null close to the crack tip. In the latter case, the relation between the energy release rate and the stress intensity factors shows \eqref{e:finalERR}, cf.\ \cite{MR3844481}.

\begin{lemma}\label{l.approximation}
Let~$\{\Gamma_{\sigma}\}_{\sigma\in[0,S]}$, $f$, and~$\C$ be as in the statement of Theorem~\ref{t.ERR}, and let~$s\in(0,S)$. Then, there exist~$\delta>0$,~$f_{n}\in L^{2}(\Om;\R^{2})$,~$\C_{n} \in C^{0,1}(\overline{\Om})$, and~$\{\Gamma_{\sigma}^{n}\}_{\sigma\in [0,s + \delta]}  \subseteq \mathcal{R}^{0,1}_{\eta}$ such that~$f_{n} \to f$ strongly in~$L^{2}(\Om;\R^{2})$,~$\C_{n}\rightharpoonup \C$ weakly* in~$W^{1,\infty}(\Om)$,~$\Gamma^{n}_{\sigma}\to \Gamma_{\sigma}$ in the Hausdorff metric of sets for every~$\sigma\in [0, s+\delta]$, and, close to the tip of~$\Gamma_{s}^{n}$,~$\Gamma_{s}^{n}$ is smooth, $f_{n}=0$, and~$\C_{n}$ is constant.

Moreover, if~$\{\hat{\Gamma}_{\sigma}\}_{\sigma\in[0,S]}$ is another family of curves in~$\adcr^{0,1}$ with~$\hat{\Gamma}_{\sigma} = \Gamma_{\sigma}$ for $\sigma\leq s$, then the sequences~$\{\Gamma_{\sigma}^{n}\}_{\sigma\in [0,s + \delta]}$,~$\{\hat{\Gamma}_{\sigma}^{n}\}_{\sigma\in [0,s + \delta]}$,~$\C_{n}$,~$\hat{\C}_{n}$,~$f_{n}$, and~$\hat{f}_{n}$ can be chosen in such a way that~$\hat{\Gamma}_{\sigma}^{n} = \Gamma_{\sigma}^{n}$ for~$\sigma\leq s$,~$\C_{n} = \hat{\C}_{n}$, and~$f_{n}= \hat{f}_{n}$.
\end{lemma}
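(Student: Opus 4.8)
The plan is to perform all modifications inside a small fixed ball around the tip, where the crack is a single $C^{1,1}$ graph and the boundary datum vanishes. First I would fix $\ell>0$ so small that $\B_{2\ell}(\gamma(s))\Subset\Om$, $\B_{2\ell}(\gamma(s))\cap\Gamma_{0}=\emptyset$, and $\Gamma_{s}\cap\B_{2\ell}(\gamma(s))$ is a single sub-arc of $\Gamma_{s}$ ending at $\gamma(s)$; recall that then $w\equiv 0$ on $\B_{2\ell}(\gamma(s))$. I would also pick radii $\rho_{n}\downarrow 0$ and cut-offs $\varphi_{n}\in C^{\infty}_{c}(\B_{\rho_{n}}(\gamma(s)))$ with $\varphi_{n}\equiv 1$ on $\B_{\rho_{n}/2}(\gamma(s))$ and $|\nabla\varphi_{n}|\le C/\rho_{n}$, together with parameters $\epsilon_{n}\downarrow 0$ with $\epsilon_{n}\ll\rho_{n}$ and $h_{n}:=\epsilon_{n}^{1/2}$, and a $\delta>0$ fixed once and for all with $s+\delta<S$.

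Next I would set $f_{n}:=(1-\varphi_{n})f$ and $\C_{n}:=\varphi_{n}\,\C(\gamma(s))+(1-\varphi_{n})\,\C$. Then $f_{n}\equiv 0$ and $\C_{n}\equiv\C(\gamma(s))$ on $\B_{\rho_{n}/2}(\gamma(s))$, while $f_{n}=f$ and $\C_{n}=\C$ outside $\B_{\rho_{n}}(\gamma(s))$, and $\C_{n}$ is again of Lam\'e type with positive coefficients. Clearly $\|f-f_{n}\|_{2}\le\|f\|_{2,\B_{\rho_{n}}(\gamma(s))}\to 0$. Moreover $\C_{n}\in C^{0,1}(\overline{\Om})$ with $\|\nabla\C_{n}\|_{\infty}\le C(1+\|\nabla\C\|_{\infty})$: indeed $\nabla\C_{n}=(1-\varphi_{n})\nabla\C-(\C-\C(\gamma(s)))\nabla\varphi_{n}$, and on the transition annulus the factor $|\C-\C(\gamma(s))|\le\|\nabla\C\|_{\infty}\rho_{n}$ compensates $|\nabla\varphi_{n}|\le C/\rho_{n}$. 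Since moreover $\C_{n}\to\C$ uniformly and $\nabla\C_{n}-\nabla\C$ is supported in $\B_{\rho_{n}}(\gamma(s))$ and bounded in $L^{\infty}$, one gets $\C_{n}\rightharpoonup\C$ weakly* in $W^{1,\infty}(\Om)$. Both $f_{n}$ and $\C_{n}$ depend only on $f$, $\C$ and on the single point $\gamma(s)$, which already gives the compatibility statement for them.

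For the crack I would modify the arc-length parametrization $\gamma$ of the family only on the window $[s-\epsilon_{n},\,s+h_{n}]$, keeping $\gamma^{n}=\gamma$ elsewhere. On $[s-\epsilon_{n},s]$ I replace $\Gamma_{s}$ by a circular arc of radius $\eta$, of arc-length $\epsilon_{n}$, attached to $\Gamma_{s}$ with matching tangent at $\gamma(s-\epsilon_{n})$, the bending side being fixed once and for all as a function of $\gamma|_{[0,s]}$ only (its precise choice being immaterial for $n$ large); its free endpoint $\gamma^{n}(s)$ then satisfies $|\gamma^{n}(s)-\gamma(s)|=O(\epsilon_{n}^{2})$ and its tangent there is $O(\epsilon_{n})$-close to $\gamma'(s)$. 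On $[s,s+h_{n}]$ I connect $\gamma^{n}(s)$ and its tangent to $\gamma(s+h_{n})$ and $\gamma'(s+h_{n})$ by a short $C^{1}$ arc of curvature $\le 1/\eta$ — e.g.\ a biarc of two circular arcs of radius $\eta$, or a segment--arc--segment path of turning radius $\eta$ — which exists for $n$ large, the two point--tangent data being $O(h_{n})$-close; then I glue $C^{1}$ to $\gamma$ at $s+h_{n}$ and reparametrize by arc length. The curve $\gamma^{n}$ so obtained is $C^{1,1}$, has curvature $\le 1/\eta$ everywhere — outside the window it is the unchanged $\gamma$, inside it is a concatenation of segments and arcs of radius $\eta$ — is $C^{\infty}$ (a circular arc) in a neighborhood of its tip $\gamma^{n}(s)$, and satisfies $\gamma^{n}\to\gamma$ in $C^{1}$ because it differs from $\gamma$ only on a window shrinking to $\{s\}$; hence $\Gamma^{n}_{\sigma}:=\gamma^{n}([0,\sigma])\to\Gamma_{\sigma}$ in the Hausdorff metric for every $\sigma\in[0,s+\delta]$. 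Since $\epsilon_{n}\ll\rho_{n}$, the circular-arc cap is contained in $\B_{2\epsilon_{n}}(\gamma^{n}(s))\subseteq\B_{\rho_{n}/2}(\gamma(s))$, so it is a neighborhood of $\gamma^{n}(s)$ in $\Gamma^{n}_{s}$ on which $\Gamma^{n}_{s}$ is smooth, $f_{n}=0$, and $\C_{n}$ is constant. Finally, the portion of $\gamma^{n}$ with arc-length parameter $\le s$ consists only of the cap, which depends on $\gamma|_{[0,s]}$ alone; hence when $\hat{\Gamma}_{\sigma}=\Gamma_{\sigma}$ for $\sigma\le s$ one may take $\hat{f}_{n}=f_{n}$, $\hat{\C}_{n}=\C_{n}$, and $\hat{\Gamma}^{n}_{\sigma}=\Gamma^{n}_{\sigma}$ for $\sigma\le s$, the two families differing only through their connecting arcs for $\sigma>s$.

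The hard part will be to verify that $\Gamma^{n}_{\sigma}\in\mathcal{R}^{0,1}_{\eta}$ for all $\sigma\in[0,s+\delta]$ and $n$ large. Properties (a)--(b) of Definition \ref{defin:adcr} are immediate (a single simple $C^{1,1}$ arc whose only boundary endpoint is $\gamma(0)\in\partial\Om$, with $\Om\setminus\Gamma^{n}_{\sigma}$ connected and Lipschitz). The genuine difficulty is the two-ball condition \eqref{doppia-palla} with radius \emph{exactly} $\eta$. Its curvature-bound part — curvature $\le 1/\eta$ — holds by construction, and this is precisely why the modification is made of circular arcs of radius $\eta$ (and segments) rather than, say, a mollification, which would only give curvature $1/\eta+o(1)$ and could thus violate \eqref{doppia-palla} wherever $\Gamma_{\sigma}$ saturates the bound. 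What remains is the global part, namely the absence of near-self-contacts and of near-contacts with $\partial\Om\cup\Gamma_{0}$: here one uses that the modification is supported in an arc-length window shrinking to the endpoint $\gamma(s)$, which by \eqref{doppia-palla} for $\Gamma_{s}$ itself lies at a distance bounded below uniformly in $n$ from $\Gamma_{s-c}$ (for a fixed small $c>0$) and from $\partial\Om\cup\Gamma_{0}$; since $\gamma^{n}$ is an $O(\epsilon_{n})$-perturbation of $\gamma$ concentrated near that endpoint, a continuity argument on the reach of the curve, uniform with respect to the shrinking modification, shows that for $n$ large every point of $\Gamma^{n}_{\sigma}$ still carries two tangent balls of radius $\eta$ disjoint from $\Gamma^{n}_{\sigma}$ (those of $\Gamma_{\sigma}$ at the corresponding point away from the tip; two balls straddling the radius-$\eta$ arc near the tip). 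Granting this, all assertions of the lemma follow, and Theorem \ref{t.ERR} is then obtained by applying Proposition \ref{p.localized} to $\Gamma^{n}_{\sigma}$ near its tip and letting $n\to\infty$ via Corollary \ref{c.continuityERR}.
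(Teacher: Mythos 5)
Your treatment of $f_{n}$ and $\C_{n}$ is correct and coincides with the paper's (cut off $f$ near $\gamma(s)$; interpolate $\C$ with the constant $\C(\gamma(s))$ on a shrinking annulus, the factor $|\C-\C(\gamma(s))|\le \Lip(\C)\rho_{n}$ compensating $|\nabla\varphi_{n}|\le C/\rho_{n}$), and the idea of capping the crack near the tip with an arc of one of the two radius-$\eta$ tangent balls from \eqref{doppia-palla} is also the paper's. The gap is in how you rejoin the original curve. Your connector on $[s,s+h_{n}]$ must match two point--tangent data and have curvature $\le 1/\eta$ \emph{exactly}, and this can be impossible at the required length: take $\gamma$ to be an arc of radius exactly $\eta$ curving left on $[s,s+h_{n}]$, and suppose the cap's terminal tangent deviates from $\gamma'(s)$ by an angle $\theta_{n}\sim\epsilon_{n}/\eta$ to the right (the bending side of the cap is fixed a priori, so this happens). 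Then any $C^{1}$ connector has net turning $h_{n}/\eta+\theta_{n}$, hence maximal curvature at least $(h_{n}/\eta+\theta_{n})/L_{n}>1/\eta$ whenever its length $L_{n}\le h_{n}$ --- exactly the ``$1/\eta+o(1)$'' failure you correctly diagnose for mollification. Nor do your explicit candidates save the day: a biarc of two arcs of radius \emph{exactly} $\eta$ has only two free parameters against three matching conditions; a general biarc has its radii determined by the data and they can fall below $\eta$; a Dubins segment--arc--segment of turning radius $\eta$ requires the segment lengths to come out nonnegative, which fails in the configuration above. Allowing $L_{n}>h_{n}$ and reparametrizing can be made to work, but none of this is carried out, and ``which exists for $n$ large'' is precisely the point that needs proof.

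The paper circumvents the rejoining problem altogether, and this is the one idea your proposal is missing. It truncates at $s_{n}<s$, follows the radius-$\eta$ circle \eqref{e:crf} not for a prescribed arc length but \emph{until its tangent direction equals $\gamma'(s)$}, then pads with a straight segment parallel to $\gamma'(s)$ so that the new tip sits at arc length exactly $s$; the elementary but essential inequality \eqref{Hn}--\eqref{Hsbar} (the radius-$\eta$ circle turns at least as fast as any admissible curve) guarantees that the circular piece has length $\le s-s_{n}$, so there is room for the segment. For $\sigma>s$ one then attaches a \emph{rigid translate} of $\Gamma_{\sigma}\setminus\Gamma_{s}$: the junction is $C^{1}$ by construction of the tangent, the curvature bound and the tangent-ball structure of the extension are inherited under translation, and no connector back to the original curve is ever needed. (The global verification of \eqref{doppia-palla} for the assembled curve is asserted rather than detailed in the paper as well, so I do not count your sketched ``uniform reach'' argument against you; but the translation device also makes that step far more transparent than a perturbation of $\gamma$ concentrated at the tip.) Your final paragraph deducing Theorem \ref{t.ERR} from Proposition \ref{p.localized} and Corollary \ref{c.continuityERR} is the paper's argument and is fine once the construction is repaired.
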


\begin{proof}
We start with the construction of an approximating sequence for~$\Gamma_{\sigma}$. Let $\delta>0$ be such that~$s+\delta<S$. Let us fix a sequence~$s_{n} \nearrow s$. By definition of the class~$\adcr^{0,1}$, for every~$n$ there exist two open balls~$\B_{\eta,n}^1$ and~$\B_{\eta,n}^2$ of radius~$\eta$ such that $\overline{\B}_{\eta,n}^1\cap\overline{\B}_{\eta,n}^2=\{\gamma(s_{n}) \}$ and $(\B_{\eta,n}^1\cup \B_{\eta,n}^2)\cap\Gamma_{ s}=\emptyset$. Up to a redefinition of~$\delta$, for~$n$ large enough we may assume that the portion of curve $\{\gamma(\sigma):\, s_{n}-\delta \leq \sigma \leq s + \delta\}\subseteq \Gamma_{s + \delta}$ can be represented, in a suitable coordinate system~$(x_{1}, x_{2})$  possibly dependent on $n$, as graph of a function~$\psi_{n}$ of class~$C^{1,1}$ with~$\psi'_{n}(x_{1}^{s_{n}})=0$, where the point $(x_{1}^{s_{n}}, \psi_{n}(x_{1}^{s_{n}}))$ coincides with~$\gamma(s_{n})$. A similar notation is used for~$\gamma( s) = (x_{1}^{s}, \psi_{n}(x_{1}^{s}))$. Without loss of generality, we assume that~$\psi_{n}'(x_{1}^{ s}) \geq 0$. 

The idea of our construction is to extend the curve~$\Gamma_{s_{n}}$ with the arc of circumference of equation
\begin{equation}\label{e:crf}
x_{2} = \psi_{n}(x_{1}^{s_{n}}) + \eta - \sqrt{ \eta^{2} - (x_{1} - x_{1}^{s_{n}})^{2} } \qquad\text{for $x_{1} \in [x_{1}^{s_{n}}, \bar x_{1})$}\,,
\end{equation}
where~$\bar{x}_{1}$ is the smallest~$x_{1}\geq x_{1}^{s_{n}}$ such that $\psi_{n}'(\bar{x}_{1}) = \psi_{n}'(x_{1}^{ s})$. We notice that~\eqref{e:crf} is the equation of the boundary of one of the two open balls~$\B_{\eta, n}^{i}$ and that $\bar{x}_{1} = x_{1}^{s_{n}}$ whenever $\psi_{n}'(x_{1}^{s}) = 0$. We denote with~$\Lambda^{n}$ the extension of~$\Gamma_{s_{n}}$ with the arc~\eqref{e:crf}  and its tip with~$P_{n}$. We also use the symbol~$\Lambda_{\sigma}^{n}$, $\sigma\in[0,\Haus(\Lambda^{n})]$, to indicate the piece of curve contained in~$\Lambda^{n}$ of length~$\sigma$.

A direct computation gives $\Haus(\Lambda^n)=s_n +\eta \arctan(\psi_{n}'(x_{1}^{s}))$, which can also be written as follows:
\begin{equation}\label{Hn}
\Haus(\Lambda^n) = s_n +\eta \int_0^{\psi_{n}'(x_{1}^{s})}\frac{1}{1+x^2}\, \mathrm{d}x.
\end{equation}
On the other hand, exploiting the upper bound $\eta^{-1}$ on the curvature of the crack set, which reads $|\psi_n''|\,[1+(\psi_n')^2]^{-3/2}\leq \eta^{-1}$ in terms of the graph parametrization, we get
\begin{equation}\label{Hsbar}
\Haus(\Gamma_{s})=s_n + \int_{x_1^{s_n}}^{x_1^s}\sqrt{1+(\psi_n')^2(x)}\, \mathrm{d} x\geq s_n +  \eta  \int_{x_1^{s_n}}^{x_1^s} \frac{|\psi_n''|}{1+(\psi_n')^2(x)}\, \mathrm{d} x \geq s_n +  \eta  \int_0^{\psi_{n}'(x_{1}^{s})} \frac{1}{1+x^2}\, \mathrm{d} x.
\end{equation}
Comparing \eqref{Hn} and \eqref{Hsbar}, we conclude that $\Haus(\Lambda^{n}) \leq \Haus(\Gamma_{s})=s$.

If~$\Haus(\Lambda^{n}) < s$, we denote with~$\alpha^{n}_{\ell}$ the segment of length~$\ell$, initial point~$P_{n}$ and parallel to~$\gamma'( s)$, and we define~$\{\Gamma^{n}_{\sigma}\}_{\sigma\in[0,s + \delta]}$ as follows:
\begin{displaymath}
\Gamma^{n}_{\sigma}\coloneqq \left\{ \begin{array}{lll}
\Lambda^{n}_{\sigma} & \text{if $\sigma\in[0, \Haus(\Lambda^{n})]$}\,,\\[1mm]
\Lambda^{n} \cup \alpha^{n}_{\sigma - \Haus(\Lambda^{n})}  & \text{if $\sigma\in[\Haus(\Lambda^{n}), s ]$}\,,\\[1mm]
\Lambda^{n} \cup \alpha^{n}_{ s - \Haus(\Lambda^{n})}  \cup \big( (P_{n} + (s - \Haus(\Lambda^{n})) \gamma'(s)  - \gamma(s)) + \Gamma_{\sigma}\setminus\Gamma_{s }\big) & \text{if $\sigma\in (\Haus(\Lambda^{n}) , s + \delta]$}\,,
\end{array}\right.
\end{displaymath}
where we have used the notation $v + E \coloneqq\{ v+e:\: e\in E\}$ for $v\in\R^{2}$ and~$E\subseteq\R^{2}$.

If $\Haus(\Lambda^{n}) = s$, we simply set
\begin{displaymath}
\Gamma^{n}_{\sigma}\coloneqq \left\{ \begin{array}{ll}
\Lambda^{n}_{\sigma} & \text{if $\sigma\in[0, \Haus(\Lambda^{n})]$}\,,\\[1mm]
\Lambda^{n} \cup \big( (P_{n} - \gamma(s)) + \Gamma_{\sigma}\setminus\Gamma_{s }\big) & \text{if $\sigma\in (\Haus(\Lambda^{n}) , s + \delta]$}\,.
\end{array}\right.
\end{displaymath}
In both cases, we have that~$\{\Gamma^{n}_{\sigma}\}_{\sigma \in [0, s + \delta]}  \subseteq \mathcal{R}^{0,1}_{\eta}$,~$\Gamma^{n}_{\sigma} \to \Gamma_{\sigma}$ in the Hausdorff metric of sets for every~$\sigma\in[0,s + \delta]$, and~$\Gamma^{n}_{s}$ is of class~$C^{\infty}$ close to its tip.

The construction of~$f_{n}$ is trivial, since the set of functions in~$L^{2}(\Om;\R^{2})$ that vanish close to~$\gamma(s)$ is dense in~$L^{2}(\Om;\R^{2})$ w.r.t.~the $L^{2}$-norm. We only have to ensure that~$f_{n}$ is also null close to the tip of~$\Gamma^{n}_{s}$, which is still possible because of the Hausdorff convergence.

As for the elasticity tensor~$\C$, for every~$r>0$ we consider a cut off function~$\varphi_{r}$ in~$\B_{r}(\gamma(s))$ with~$\varphi_{r}(x)= \varphi_{r}(|x - \gamma(s)|)$, $\varphi_{r}=1$ in~$\overline{\B}_{r/2}(\gamma(s))$, and~$|\nabla \varphi_{r}|\leq C/r$ for some positive constant~$C$ independent of~$r$. Let us set~$\C_{s}\coloneqq \C(\gamma(s))$ and~$\C_{r}\coloneqq \varphi_{r} \C + (1-\varphi_{r})\C_{s}$. It is easy to see that~$\C_{r}\in C^{0,1}(\overline{\Om})$ with Lipschitz constant bounded by $C(\Lip(\C) +1)$. Hence,~$\C_{r} \rightharpoonup \C$ weakly* in~$W^{1,\infty}(\Om)$ as~$r\searrow 0$. To conclude, it is enough to choose a suitable sequence~$r_{n}\searrow 0$ in such a way that~$\C_{n}\coloneqq \C_{r_{n}}$ is constant close to the tip of~$\Gamma^{n}_{s}$. This is possible thanks to the Hausdorff convergence of~$\Gamma^{n}_{s}$ to~$\Gamma_{s}$.

The last part of the lemma is a trivial consequence of the above construction.
\end{proof}

\begin{proof}[Proof of Theorem~\ref{t.ERR}]

To prove~\eqref{e:finalERR}, we apply Lemma~\ref{l.approximation} to both~$\Gamma_{\sigma}$ and~$\hat{\Gamma}_{\sigma}$. Fixed $s\in(0,S)$ and $\delta>0$ small, let~$\{\Gamma^{n}_{\sigma}\}_{\sigma\in [0, s+\delta]}$, $\{\hat{\Gamma}_{\sigma}\}_{\{\sigma\in[0,s+\delta]}$,~$\C_{n}$, and~$f_{n}$ be as in Lemma~\ref{l.approximation}. By Corollary~\ref{c.continuityERR} we have that
\begin{eqnarray*}
& \displaystyle \lim_{n}\, \frac{\de\En(f_{n}, g, w,\C_{n};\Gamma^{n}_{\sigma})}{\de \sigma} \bigg|_{\sigma = s}= \frac{\de\En (f, g, w, \C;\Gamma_{ \sigma})}{\de \sigma} \bigg|_{\sigma = s} \,,\\
& \displaystyle  \lim_{n}\, \frac{\de\En(f_{n}, g, w,\C_{n}; \hat \Gamma^{n}_{\sigma})}{\de \sigma} \bigg|_{\sigma = s} = \frac{\de\En (f, g, w,\C; \hat{\Gamma}_{\sigma})}{\de \sigma} \bigg|_{\sigma = s}\,.
\end{eqnarray*}
Taking into account Proposition~\ref{p.localized}, we have that
\begin{displaymath}
\frac{\de\En(f_{n}, g, w,\C_{n};\Gamma^{n}_{\sigma})}{\de \sigma} \bigg|_{\sigma = s} = \frac{\de\En (f_{n}, g, w,\C_{n};\hat{\Gamma}^{n}_{\sigma})}{\de \sigma} \bigg|_{\sigma = s}\qquad\text{for every~$n\in \mathbb N$}
\end{displaymath}
and we deduce~\eqref{e:finalERR}.
\end{proof}

We are now in a position to give the precise definition of \emph{energy release rate} for a crack of the form~\eqref{crack}. We stress that this is now possible thanks to Theorem~\ref{t.ERR}. 

\begin{definition}\label{d.ERR}
Let~$\Gamma \in \mathcal{R}^{0,1}_{\eta}$, $ s\coloneqq \Haus(\Gamma)$, $f\in L^2(\Omega;\mathbb R^2)$, $g\in L^{2}(\partial_{S}\Om;\R^{2})$, $w\in H^{1}(\Om\setminus\Gamma_{0};\R^{2})$, and $\C\in C^{0,1}(\overline{\Om})$. Let $S>  s$ and let~$\{\Gamma_{\sigma}\}_{\sigma \in [0,S]} \subseteq \mathcal{R}^{0,1}_{\eta}$ be such~$\Gamma_{ s}=\Gamma$. We define the energy release~$\mathcal{G}(\Gamma)$ as
\begin{displaymath}
\mathcal{G}(\Gamma)\coloneqq - \frac{\de \En(\Gamma_{\sigma})}{\de \sigma}\bigg|_{\sigma = s} .
\end{displaymath} 
\end{definition}

\begin{remark}\label{r.ERR}
Definition~\ref{d.ERR}, stated for a curve~$\Gamma \in \mathcal{R}^{0,1}_{\eta}$, can be further generalized in order to consider general cracks in the class~$\adcr$. Indeed, given~$\Gamma\in\adcr$, it is enough to represent it as union of arcs of~$C^{1,1}$ curves~$\Gamma^{m}$, $m=1,\ldots, M$. In particular, each component belongs to~$\mathcal{R}^{0,1}_{\eta}$ and can be written as in~\eqref{crack}. Hence, for every~$m$ we define the $m$-th energy release rate~$\mathcal{G}^{m}(\Gamma)$ as in Definition~\ref{d.ERR} w.r.t.~variations of the sole component~$\Gamma^{m}$ of~$\Gamma$. The energy release rate will be in this case the vector
\begin{displaymath}
\mathcal{G}(\Gamma) \coloneqq (\mathcal{G}^{1}(\Gamma), \ldots, \mathcal{G}^{M}(\Gamma))\,.
\end{displaymath}
\end{remark}

\begin{remark}\label{r.ERR2}
We collect here the main properties of the energy release rate~$\G$.
\begin{itemize}
\item[$(a)$] $\G$ is continuous w.r.t.~the Hausdorff convergence of cracks $\Gamma\in\adcr$,  strong   convergence of volume forces~$f\in L^{2}(\Om;\R^{2})$, weak convergence of surface forces~$g\in L^{2}(\partial_{S}\Om;\R^{2})$, and convergence of Dirichlet boundary data~$w\in H^{1}(\Om \setminus \Gamma_{0};\R^{2})$;

\item[$(b)$] there exists a positive constant~$C= C(\C,\eta)$ such that for every $\Gamma\in\adcr$, every~$f\in L^{2}(\Om;\R^{2})$, every~$g\in L^{2}(\partial_{S}\Om;\R^{2})$, and every~$w\in H^{1}(\Om \setminus \Gamma_{0};\R^{2})$,
\begin{displaymath}
0\leq \G(\Gamma) \leq C \|u\|_{H^{1}}^{2} + \|f\|_{2}\|u\|_{H^{1}}\,,
\end{displaymath}
where~$u\in H^{1}(\Om\setminus\Gamma;\R^{2})$ is the solution of~\eqref{pb:min} with data~$\Gamma$,~$f$,~$g$,~$w$, and $\C$.
\end{itemize}
We will make use of these two properties in the proofs of Proposition~\ref{prop:viscous} and Theorems~\ref{thm:bv} and~\ref{thm:param}.
\end{remark}

\section{Vanishing viscosity evolutions}\label{sec:vve}

In this section we focus on the proofs of existence of a viscous evolution~$\Gamma_{\eps}$ (see Proposition~\ref{prop:viscous}) and of a balanced viscosity evolution~$\Gamma$ (Theorem~\ref{thm:bv}), the latter obtained as limit of~$\Gamma_{\eps}$ as the viscosity parameter~$\eps$ tends to 0. To this end, we follow the method employed in a wide literature on rate-independent processes~\cite{MR3380972}. However, we point out that the abstract results of~\cite{MR2525194,MR2887927,MR3531671} do not directly apply to our setting.  

Since the problem we analyze depends explicitly on time~$t$ through the applied loads~$f$,~$g$, and~$w$, from now on we denote with~$\G(t; \Gamma)$ the energy release rate defined in Definition~\ref{d.ERR} and Remark~\ref{r.ERR} for a crack~$\Gamma\in \adcr$ at time~$t\in[0,T]$.

As anticipated in Section~\ref{s:model}, the proofs of Proposition~\ref{prop:viscous} and of Theorem~\ref{thm:bv} are based on a time-discretization procedure. Let the initial crack~$\Gamma_{0}\in\adcr^{0}$ and the viscosity parameter~$\eps > 0$ be fixed, and let us set~$l^{m}_{0}\coloneqq \Haus(\Gamma^{m}_{0})$, where $\Gamma_{0} = \bigcup_{m=1}^{M} \Gamma^{m}_{0}$, according to Definition~\ref{defin:adcr}. For every~$k\in\mathbb{N}$ we fix a partition~$\{t_{k,i}\}_{i=0}^{k}$ of the time interval~$[0,T]$ as in~\eqref{defin:time-disc}. For $i=0$ we set~$\Gamma_{\eps,k,0}\coloneqq \Gamma_{0}$. For $i\in\{1, \ldots, k\}$ we denote with~$\Gamma_{\eps, k, i}$ a minimizer of the incremental minimum problem~\eqref{pb:increm-min-eps}, whose existence is provided by Corollary~\ref{cor:disc-exist}. Recalling the conventions of Definition~\ref{defin:adcr2}, we write~$\Gamma_{\eps,k,i} = \bigcup_{m=1}^{M} \Gamma_{\eps, k, i}^{m}$, we set~$l^{m}_{\eps, k, i}\coloneqq \Haus(\Gamma^{m}_{\eps, k, i})$, and we denote with~$P^{m}_{\eps, k, i}$ the tip of~$\Gamma^{m}_{\eps, k, i}$. Furthermore, we define the interpolation functions 
\begin{alignat*}{3}
& \displaystyle l^{m}_{\eps, k} (t) \coloneqq l^{m}_{\eps, k, i-1} + (l^{m}_{\eps, k, i} - l^{m}_{\eps, k, i-1}) \, \frac{ t - t_{k,i-1} }{t_{k,i} - t_{k,i-1}} \,,\\[1mm]
& \displaystyle \G^{m}_{\eps, k}(t) \coloneqq \G^{m}(t_{k,i} ; \Gamma_{\eps, k, i})\,, \quad \G_{\eps, k}(t)\coloneqq \G(t_{k,i};\Gamma_{\eps, k, i})  \,, \\[1mm]
& \displaystyle \Gamma_{\eps, k} (t)\coloneqq \Gamma_{\eps, k, i}\,,\quad P^{m}_{\eps, k}(t) \coloneqq P^{m}_{\eps, k, i} \,, \quad u_{\eps, k}(t)\coloneqq u^{\eps}_{k,i}  \,, \\
& \displaystyle t_{k}(t) \coloneqq t_{k,i}\,,\quad f_{k} (t) \coloneqq f(t_{k,i}) \,, \quad g_{k} (t) \coloneqq g(t_{k,i})  & \qquad \text{for $t\in(t_{k,i-1}, t_{k,i}]$} \,,\\
& \displaystyle \underline{\Gamma}_{\eps, k}(t)\coloneqq \Gamma_{\eps, k, i-1} \,,\qquad \underline{u}_{\eps, k}(t)\coloneqq u^{\eps}_{k,i}& \qquad \text{for $t\in[t_{k,i-1}, t_{k,i})$}\,,
\end{alignat*}
where we denoted with~$u^{\eps}_{k,i}$ the function~$u(t_{k,i};\Gamma_{\eps, k, i}) \in H^{1}(\Om\setminus\Gamma_{\eps, k, i})$.

In the following proposition we state a time discrete version of the Griffith's criterion~(G1)$_{\eps}$--(G3)$_{\eps}$.

\begin{proposition}\label{prop:discrete-Griffith}
For every~$\eps>0$, every~$k\in\mathbb{N}$, every~$m\in\{1,\ldots, M\}$, and a.e.~$t\in(0,T]$ it holds:
\begin{itemize}
\item[(G1)$_{k}$] $\dot{l}^{m}_{\eps, k}(t) \geq 0$;

\item[(G2)$_{k}$] $\kappa(P^{m}_{\eps, k} (t) ) -\G^{m}_{\eps, k} ( t ) +  \eps \dot{l}^{m}_{\eps, k}(t) \geq 0$;

\item[(G3)$_{k}$] $ \dot{l}^{m}_{\eps, k}(t) ( \kappa( P^{m}_{\eps, k} (t) ) - \G^{m}_{\eps, k} ( t ) + \eps \dot{l}_{\eps, k}(t))  = 0$.
\end{itemize}
\end{proposition}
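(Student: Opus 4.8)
The plan is to derive (G1)$_k$–(G3)$_k$ from the Euler--Lagrange conditions of the incremental minimum problem \eqref{pb:increm-min-eps}. Fix $\eps>0$, $k\in\mathbb N$, $i\in\{1,\dots,k\}$ and write $\Gamma=\Gamma_{\eps,k,i}$, $\Gamma^-=\Gamma_{\eps,k,i-1}$, $\tau_i=t_{k,i}-t_{k,i-1}$, and $l^m=l^m_{\eps,k,i}$, $(l^m)^-=l^m_{\eps,k,i-1}$. Because $\Gamma$ is a minimizer among competitors $\widehat\Gamma\in\adcr$ with $\widehat\Gamma\supseteq\Gamma^-$, and because each component $\Gamma^m$ can be elongated independently by an amount $\sigma\ge 0$ keeping the others fixed (this is exactly the admissible class of variations used to define $\mathcal G^m$ in Definition~\ref{d.ERR} and Remark~\ref{r.ERR}), the function
\[
\sigma \mapsto \En(t_{k,i};\Gamma^{(m,\sigma)}) + \sigma + \frac{\eps}{2}\,\frac{(l^m+\sigma - (l^m)^-)^2}{\tau_i},
\]
where $\Gamma^{(m,\sigma)}$ denotes the configuration obtained from $\Gamma$ by extending only $\Gamma^m$ by arc length $\sigma$ (with $\kappa\equiv 1$ in the present normalization of \eqref{pb:increm-min-eps}, which I will keep; the general $\kappa$ only rescales), attains its minimum over $\sigma\ge 0$ at $\sigma=0$. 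By Proposition~\ref{prop-err} this map is differentiable in $\sigma$, with right derivative at $0$ equal to $-\mathcal G^m(t_{k,i};\Gamma) + 1 + \eps\,(l^m-(l^m)^-)/\tau_i$. The first-order condition for a constrained minimum at the left endpoint $\sigma=0$ then gives
\[
-\,\mathcal G^m(t_{k,i};\Gamma) + 1 + \eps\,\frac{l^m-(l^m)^-}{\tau_i} \;\ge\; 0 .
\]

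Next I would match this with the interpolants. On $(t_{k,i-1},t_{k,i}]$ the piecewise affine function $l^m_{\eps,k}$ has derivative $\dot l^m_{\eps,k}(t)=(l^m-(l^m)^-)/\tau_i$, which is $\ge 0$ by the unilateral constraint $\Gamma\supseteq\Gamma^-$ (elongations are nonnegative), giving (G1)$_k$. Also $\G^m_{\eps,k}(t)=\mathcal G^m(t_{k,i};\Gamma)$ and $P^m_{\eps,k}(t)=P^m_{\eps,k,i}$ on this interval by definition; restoring the general density, the stability inequality above reads precisely $\kappa(P^m_{\eps,k}(t)) - \G^m_{\eps,k}(t) + \eps\,\dot l^m_{\eps,k}(t)\ge 0$, which is (G2)$_k$. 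For (G3)$_k$ I would distinguish two cases: if $l^m=(l^m)^-$ then $\dot l^m_{\eps,k}(t)=0$ and the product vanishes trivially; if $l^m>(l^m)^-$, then $\sigma=0$ is an interior-type minimizer in the direction of decreasing $\sigma$ as well — more precisely one compares $\Gamma$ with the competitor obtained by shortening $\Gamma^m$ slightly back towards $\Gamma^-$, which is still admissible since $l^m>(l^m)^-$ — so the derivative of the energy sum must also be $\le 0$ there, forcing equality $-\mathcal G^m(t_{k,i};\Gamma)+\kappa(P^m_{\eps,k,i})+\eps\dot l^m_{\eps,k}(t)=0$; multiplying by $\dot l^m_{\eps,k}(t)$ yields (G3)$_k$ in both cases.

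A point that needs care, and which I expect to be the main obstacle, is the legitimacy of the "shortening" variation used for (G3)$_k$: one must check that shortening the single branch $\Gamma^m$ by a small amount while keeping $\Gamma^-\subseteq$ the result and keeping properties (a)--(f) of Definitions~\ref{defin:adcr}--\ref{defin:adcr2} produces a genuinely admissible competitor in $\adcr$, and that the energy $\En$ varies differentiably along this inward family — but this is exactly symmetric to the outward case handled by Proposition~\ref{prop-err} (the two-sided limits \eqref{e.limit1}--\eqref{e.limit2} coincide), so the same derivative formula applies. The only genuine constraint is $\sigma\ge -(l^m-(l^m)^-)$, i.e.\ one may not shorten past $\Gamma^-$; since we only need an arbitrarily small inward perturbation when $l^m>(l^m)^-$ strictly, this is harmless. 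Finally, "a.e.\ $t$" in the statement simply accounts for the finitely many nodes $t_{k,i}$ where the piecewise affine $l^m_{\eps,k}$ is not differentiable; away from those, the above identities hold pointwise, so the proof is complete.
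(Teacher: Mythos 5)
Your proposal is correct and follows essentially the same route as the paper: (G1)$_k$ from the unilateral constraint, (G2)$_k$ from the first‑order condition at $\sigma=0$ against elongated competitors of a single branch (the paper phrases this as dividing the minimality inequality by $\lambda-l^{\overline m}_{\eps,k,i}$ and letting $\lambda\searrow l^{\overline m}_{\eps,k,i}$, which is the same right‑derivative computation via Proposition~\ref{prop-err} and Definition~\ref{d.ERR}), and (G3)$_k$ from the additional shortening variation $\Gamma^{\overline m}_{\eps,k,i-1}\subseteq\Lambda^{\overline m}\subseteq\Gamma^{\overline m}_{\eps,k,i}$ when the branch strictly grew. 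The point you flag about admissibility of the shortened competitor is handled implicitly in the paper exactly as you argue, so there is no gap.
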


\begin{proof}
By construction,~$l^{m}_{\eps, k}$ is a non-decreasing function, so that~(G1)$_{k}$ is clearly satisfied. In order to show~(G2)$_{k}$--(G3)$_{k}$ we take into account the minimality of~$\Gamma_{\eps, k, i}$. Let us fix~$i\in\{1,\ldots\, k \}$. For every~$\overline{m}\in \{1,\ldots, M\}$, let~$\Gamma_{\overline{m}}\in \adcr$ be such that~$\Gamma_{ \overline{m}} = \bigcup_{m\neq \overline{m}} \Gamma_{\eps, k, i}^{m} \cup \Lambda^{\overline{m}}$ with~$\Lambda^{\overline m} \supseteq \Gamma^{\overline{m}}_{\eps, k, i}$, and let us set~$\lambda\coloneqq \Haus(\Lambda^{\overline{m}}) \geq l^{\overline{m}}_{\eps, k, i}$. Then,
\begin{displaymath}
\begin{split}
\En (t_{k,i};\Gamma_{\eps, k, i}) & + \int_{\Gamma_{\eps, k, i}} \!\!\!\!\!\! \kappa\,\di \Haus +  \frac{\eps}{2} \sum_{m=1}^{M}\frac{\Haus(\Gamma^{m}_{\eps, k, i}\setdiff\Ga^{m}_{\eps,k,i-1})^2}{t_{k,i}-t_{k,i-1}} \\
&  \leq \En (t_{k,i};\Gamma_{ \overline{m}}) + \int_{\Gamma_{\overline{m}}} \!\! \kappa\,\di \Haus  +  \frac{\eps}{2} \sum_{m\neq \overline{m}} \frac{\Haus(\Gamma^{m}_{\eps, k, i}\setdiff\Ga^{m}_{\eps,k,i-1})^2}{t_{k,i}-t_{k,i-1}} +  \frac{\eps}{2} \frac{\Haus(\Lambda^{\overline{m}}\setdiff\Ga^{\overline{m}}_{\eps,k,i-1})^2}{t_{k,i}-t_{k,i-1}}\,,
\end{split}
\end{displaymath}
which implies
\begin{equation}\label{boh1}
\En (t_{k,i};\Gamma_{\eps, k, i}) +  \int_{\Gamma^{\overline{m}}_{\eps, k, i}}\kappa\,\di \Haus  +  \frac{\eps}{2} \frac{(l^{\overline{m}}_{\eps, k, i} - l^{\overline{m}}_{\eps, k, i-1})^2}{t_{k,i}-t_{k,i-1}} \leq \En (t_{k,i};\Gamma_{\overline{m}}) +  \int_{\Lambda^{\overline{m}}} \kappa\,\di \Haus  +  \frac{\eps}{2}  \frac{( \lambda - l^{\overline{m}}_{\eps, k, i-1})^2}{t_{k,i}-t_{k,i-1}}\,.
\end{equation}
We divide~\eqref{boh1} by~$\lambda - l^{\overline{m}}_{\eps, k, i}$ and pass to the limit as~$\lambda \to l^{\overline{m}}_{\eps, k, i}$, obtaining~(G2)$_{k}$ as a consequence of~(H4) and of Definition~\ref{d.ERR}.
If, moreover, $\Gamma^{\overline{m}}_{\eps, k, i-1}\subsetneq \Gamma^{\overline{m}}_{\eps, k, i}$, we can consider as a competitor a set~$\Gamma_{\overline{m}}\in \adcr$ as above, with~$\Gamma^{\overline{m}}_{\eps, k, i-1}\subseteq \Lambda^{\overline{m}} \subseteq \Gamma^{\overline{m}}_{\eps, k, i}$, so that $ l^{\overline{m}}_{\eps, k, i-1}\leq \lambda \leq l^{\overline{m}}_{\eps, k, i}$. Repeating the above computation we obtain
\[
\kappa(P^{\overline{m}}_{\eps, k, i}) - \G^{\overline{m}}( t_{k, i}; \Gamma^{\overline{m}}_{\eps, k, i}) + \eps \, \frac{l^{\overline{m}}_{\eps, k, i} - l^{\overline{m}}_{\eps, k, i-1}}{t_{k,i}- t_{k, i-1}} =0 \,.
\]
This concludes the proof of~(G3)$_{k}$.
\end{proof}

We now show an a priori bound on~$l_{\eps, k}$ and on~$u_{\eps, k}$.

\begin{proposition}\label{prop:bounds}
The following facts hold:
\begin{itemize}
\item[$(a)$] there exist two positive constants~$c$~and~$C$ independent of~$\eps$,~$k$, and~$i$ such that for every~$\eps>0$, every~$k\in\mathbb{N}$, and every~$t\in[0,T]$,
%
%
%
 \begin{equation}\label{e:bound1.2} 
 \begin{split}
& \hspace{-5em}  \frac{\eps}{2} \sum_{m=1}^{M} \int_{0}^{t_{k}(t)}  |\dot{l}^{m}_{\eps, k}(\tau)|^{2}\,\di\tau   + c \, (\| \e u_{\eps, k}(t) \|^2_{2} - \|u_{\eps, k}(t)\|_{H^{1}})  \\
  \leq\ & \F(0;\Gamma_{0}) + \int_{0}^{t_k(t)} \int_{\Om}\C \e \underline{u}_{\eps, k}(\tau) : \e \dot{w}(\tau)\,\di x \, \di \tau + C\sum_{i=1}^{k} \| w_{k,i} - w_{k,i-1} \|_{H^{1}}^{2}\\
&- \int_{0}^{t_k(t)} \int_{\Om} \dot{f} (\tau) \cdot \underline{u}_{\eps, k} (\tau)\, \di x \, \di \tau  
-\int_{0}^{t_k(t)}\int_{\Om} f_{k}(\tau) \cdot \dot{w}(\tau)\,\di x\,\di \tau \\
&- \int_{0}^{t_k(t)} \int_{\partial_{S}\Om} \dot{g}(\tau)\cdot \underline{u}_{\eps, k}(\tau)\,\di \Haus\,\di\tau 
- \int_{0}^{t_k(t)}\int_{\partial_{S}\Om} g_{k}(\tau)\cdot \dot{w}(\tau)\,\di\Haus\,\di \tau \,;
\end{split}
\end{equation}

\item[$(b)$] for every~$\eps>0$, along a suitable (not relabeled) subsequence, $\|u_{\eps,k}(t)\|_{2}$ and~$\|\nabla{u}_{\eps, k}(t)\|_{2}$ are bounded uniformly w.r.t.~$t\in[0,T]$ and~$k \in \mathbb{N}$;

\item[$(c)$]for every~$\eps>0$, along a suitable (not relabeled) subsequence,~$\eps \|\dot{l}^{m}_{\eps, k}\|^{2}_{2}$ is bounded uniformly w.r.t.~$k\in\mathbb{N}$  and~$m\in\{1,\ldots,M\}$.
\end{itemize}
\end{proposition}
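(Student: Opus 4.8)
The plan is to run a standard discrete Gr\"onwall-type energy estimate, deriving a single chain of inequalities and then invoking compactness plus the continuity of the energy release rate (Remark \ref{r.ERR2}). First I would establish part $(a)$. Fix $\eps$, $k$, and an index $j\in\{1,\dots,k\}$; the main step is the stability inequality obtained by comparing $\Gamma_{\eps,k,i}$ with the competitor $\Gamma_{\eps,k,i-1}$ (which is admissible by irreversibility) in the incremental problem \eqref{pb:increm-min-eps}, together with the minimality of $u_{k,i}^\eps$ for the elastic problem \eqref{pb:min}. Summing over $i=1,\dots,j$ produces a telescoping of $\F(t_{k,i};\Gamma_{\eps,k,i})$: the boundary-datum contribution converts into the power term $\int_0^{t_k(t)}\int_\Om \C\,\e\underline u_{\eps,k}:\e\dot w\,\di x\,\di\tau$ plus a quadratic remainder $C\sum_i\|w_{k,i}-w_{k,i-1}\|_{H^1}^2$ (this uses the bilinearity of the elastic form and a Young inequality on the cross terms); the force terms $f$ and $g$ are handled by writing $\int_\Om f(t_{k,i})\cdot u_{k,i}^\eps - \int_\Om f(t_{k,i-1})\cdot u_{k,i-1}^\eps$ as a discrete integration by parts, generating the $\dot f\cdot\underline u_{\eps,k}$, $f_k\cdot\dot w$ and correspondingly $\dot g\cdot\underline u_{\eps,k}$, $g_k\cdot\dot w$ terms on the right-hand side. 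The viscous dissipation $\tfrac\eps2\sum_m\Haus(\Gamma^m_{\eps,k,i}\setdiff\Gamma^m_{\eps,k,i-1})^2/(t_{k,i}-t_{k,i-1})$ telescopes directly into $\tfrac\eps2\sum_m\int_0^{t_k(t)}|\dot l^m_{\eps,k}|^2\,\di\tau$ since $l^m_{\eps,k}$ is affine on each subinterval. Finally, the surface-energy terms $\K(\Gamma_{\eps,k,i})$ are nonnegative and can simply be dropped from the left-hand side, while the elastic energy $\En(t_k(t);\Gamma_{\eps,k}(t))$ is bounded below by $c\|\e u_{\eps,k}(t)\|_2^2 - C\|u_{\eps,k}(t)\|_{H^1}$ using the coercivity of $\C$ from (H3) and H\"older's inequality on the force terms; absorbing constants yields \eqref{e:bound1.2}.

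For part $(b)$: the right-hand side of \eqref{e:bound1.2} involves $\underline u_{\eps,k}$ and $u_{\eps,k}$ linearly (through terms like $\int\dot f\cdot\underline u_{\eps,k}$ and $\int\C\e\underline u_{\eps,k}:\e\dot w$), so one closes the estimate by a discrete Gr\"onwall argument: bound each such term by $\|\nabla u_{\eps,k}(\tau)\|_2$ times an $L^1_t$ function coming from $\dot w\in L^1(0,T;H^1)$, $\dot f\in L^1(0,T;L^2)$, $\dot g\in L^1(0,T;L^2)$ (hypotheses (H1)--(H2) give $AC$ in time, hence $L^1$ derivatives), use Proposition \ref{p.korn2} to control $\|u_{\eps,k}\|_2$ by $\|\e u_{\eps,k}\|_2$ on the Dirichlet-constrained part after subtracting $w$, and apply the discrete Gr\"onwall lemma to get a bound on $\sup_t\|\e u_{\eps,k}(t)\|_2$ uniform in $k$. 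The quantity $\sum_i\|w_{k,i}-w_{k,i-1}\|_{H^1}^2$ is controlled because $w\in AC([0,T];H^1)$, so $\|w_{k,i}-w_{k,i-1}\|_{H^1}\le\int_{t_{k,i-1}}^{t_{k,i}}\|\dot w\|_{H^1}$ and the sum of squares is bounded by $\big(\max_i\int_{t_{k,i-1}}^{t_{k,i}}\|\dot w\|_{H^1}\big)\cdot\|\dot w\|_{L^1(0,T;H^1)}\to0$; in particular it stays bounded. Passing to a subsequence is only needed if one wants to also extract weak limits, but the bound itself is uniform. Part $(c)$ is then immediate: feed the bound from $(b)$ back into \eqref{e:bound1.2}; since all terms on the right are now bounded uniformly in $\eps$ (for fixed $\eps$, uniformly in $k$), the term $\tfrac\eps2\sum_m\int_0^T|\dot l^m_{\eps,k}|^2\,\di\tau$ on the left is bounded, which is exactly the claim $\eps\|\dot l^m_{\eps,k}\|_2^2\le C$.

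The main obstacle I anticipate is making the discrete integration-by-parts for the force and boundary terms fully rigorous while keeping constants independent of $k$: one must be careful that the ``power'' terms involve $\underline u_{\eps,k}$ (the left-endpoint interpolant) rather than $u_{\eps,k}$, which is what allows the telescoping of $\int\C\e u_{k,i}^\eps:\e w_{k,i}$ to close without an error of the wrong sign; the cross term $\int\C\,\e(u_{k,i}^\eps-u_{k,i-1}^\eps):\e(w_{k,i}-w_{k,i-1})$ is the one that must be absorbed by Young's inequality into the $C\sum_i\|w_{k,i}-w_{k,i-1}\|_{H^1}^2$ remainder, and one has to check this absorption does not eat into the coercivity constant $c$ on the left. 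A secondary technical point is that $u_{k,i}^\eps$ lives on $\Om\setdiff\Gamma_{\eps,k,i}$, a domain that changes with $i$; but since $\Gamma_{\eps,k,i}$ is increasing, $u_{k,i-1}^\eps$ extends (after correcting by $w_{k,i}-w_{k,i-1}$) to a competitor on $\Om\setdiff\Gamma_{\eps,k,i}$, which is precisely how the comparison in the incremental problem is set up, so the Korn inequalities of Proposition \ref{p.korn2} apply uniformly along the evolution thanks to the compactness of $\adcr$.
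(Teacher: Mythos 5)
Your proposal is correct in substance and, for parts $(a)$ and $(c)$, follows essentially the paper's route: compare $\Ga_{\eps,k,i}$ with the competitor $\Ga_{\eps,k,i-1}$ in \eqref{pb:increm-min-eps}, test the elastic problem on $\Om\setminus\Ga_{\eps,k,i-1}$ with $u^{\eps}_{k,i-1}+w_{k,i}-w_{k,i-1}$, telescope, and then feed $(b)$ back into \eqref{e:bound1.2} to get $(c)$. The genuine divergence is in part $(b)$. The paper does \emph{not} close the estimate by a discrete Gr\"onwall iteration on \eqref{e:bound1.2}; it obtains the $H^1$ bound directly and independently of $(a)$, by testing the elastic minimum problem \eqref{pb:min} at each node with the boundary datum $w(t_{k,i})$ itself, which gives $\En(t_{k,i};\Ga_{\eps,k,i})\le C_2$ uniformly in one line, and then combines this with the coercivity estimate $C_1(\|\e u^{\eps}_{k,i}\|_2^2-\|u^{\eps}_{k,i}\|_{H^1})\le\En(t_{k,i};\Ga_{\eps,k,i})$ and with Proposition~\ref{p.korn2}. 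Your Gr\"onwall route also works (in fact, since the right-hand side of \eqref{e:bound1.2} is linear in $\sup_\tau\|\e\underline u_{\eps,k}(\tau)\|_2$ against $L^1$-in-time weights, the inequality $cy^2\le C+Cy$ closes without any iteration), but it is heavier than necessary and makes $(b)$ depend on $(a)$, whereas in the paper $(b)$ is free-standing. Two smaller corrections: the quadratic remainder $C\sum_i\|w_{k,i}-w_{k,i-1}\|_{H^1}^2$ comes directly from the term $\tfrac12\int_\Om\C\,\e(w_{k,i}-w_{k,i-1}):\e(w_{k,i}-w_{k,i-1})\,\de x$ in the expansion of the competitor $u^\eps_{k,i-1}+w_{k,i}-w_{k,i-1}$ — there is no cross term $\int\C\,\e(u^\eps_{k,i}-u^\eps_{k,i-1}):\e(w_{k,i}-w_{k,i-1})$ to absorb by Young, because $u^\eps_{k,i}$ never enters that expansion; and the subsequence extraction in $(b)$--$(c)$ is not merely cosmetic: Proposition~\ref{p.korn2} is stated for a Hausdorff-convergent sequence of cracks, so to get a Korn constant independent of $k$ one first extracts a subsequence along which $\Ga_{\eps,k}(T)$ converges in $\adcr$ and notes that $u^\eps_{k,i}\in H^1(\Om\setminus\Ga_{\eps,k}(T);\R^2)$ by monotonicity of the crack.
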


\begin{proof}
For the sake of simplicity, let us denote with~$w_{k,i}$, $f_{k,i}$, and~$g_{k,i}$ the functions~$w(t_{k,i})$,~$f(t_{k,i})$, and~$g(t_{k,i})$, respectively.

By definition of~$u_{k,i}^{\eps}$, by hypothesis~(H3), and by the regularity of the data of the problem~$f$,~$g$, and~$w$, we have that
\begin{equation}\label{e:bound1.1} 
\begin{split}
C_{1} (\| \e u^{\eps}_{k,i}\|_{2}^{2} - \|u^{\eps}_{k,i}\|_{H^{1}} ) & \leq \En(t_{k,i};\Gamma_{\eps, k, i}) \\
&  \leq \frac{1}{2}\int_{\Om} \C\e w_{k,i}:\e w_{k,i}\,\di x - \int_{\Om} f_{k,i}\cdot w_{k,i}\,\di x - \int_{\partial_{S}\Om} g_{k,i}\cdot w_{k,i}\,\di \Haus \leq C_{2}\,,
\end{split}
\end{equation}
for some positive constants~$C_{1}$ and~$C_{2}$ depending only on~$f$,~$g$,~$w$, and~$\C$. 

Since, for every~$\eps$ and~$k$, the set function~$\Gamma_{\eps, k}\colon[0,T]\to\adcr$ is nondecreasing, we have that~$u_{k,i}^{\eps}\in H^{1}(\Om\setminus\Gamma_{\eps, k}(T);\R^{2})$. By definition of the class~$\adcr$, the curves~$\Gamma_{\eps, k}(T)$ have bounded length uniformly w.r.t.~$\eps$ and~$k$. Hence, we may assume that, up to a not relabeled subsequence,~$\Gamma_{\eps, k}(T) \to \widehat{\Gamma}_{\eps}\in \adcr$ in the Hausdorff metric of sets as $k\to\infty$. We are therefore in a position to apply Proposition~\ref{p.korn2} to~$\Gamma_{\eps, k}(T)$,~$\widehat{\Gamma}_{\eps}$, and~$u^{\eps}_{k,i}$, which, together with~\eqref{e:bound1.1}, implies~(b).  

By definition of~$\Gamma_{\eps, k, i}$ and of the energy~$\En(t_{k,i}; \Gamma)$ we have that
\begin{displaymath}
\begin{split}
& \En(t_{k,i};\Gamma_{\eps,k,i}) + \int_{\Gamma_{\eps, k, i}} \!\!\!\!\!\! \kappa\,\di\Haus + \frac{\eps}{2}\sum_{m=1}^{M} \frac{\Haus(\Gamma^{m}_{\eps, k, i} \setminus \Gamma^{m}_{\eps, k, i-1})^{2} }{ t_{k,i} - t_{k,i-1} } \leq \En(t_{k,i}; \Gamma_{\eps, k, i-1}) + \int_{\Gamma_{\eps, k, i-1}} \!\!\!\!\!\!\!\! \kappa\,\di \Haus \\
& \leq \frac{1}{2}\int_{\Om} \C\e (u^{\eps}_{k,i-1} + w_{k,i}- w_{k, i-1}) : \e (u^{\eps}_{k,i-1} + w_{k,i}- w_{k, i-1})\,\di x \\
 & \qquad - \int_{\Om} f_{k,i}\cdot  (u^{\eps}_{k,i-1} + w_{k,i}- w_{k, i-1})\,\di x - \int_{\partial_{S} \Om} g_{k,i}\cdot  (u^{\eps}_{k,i-1} + w_{k,i}- w_{k, i-1})\,\di \Haus + \int_{\Gamma_{\eps, k, i-1}} \!\!\!\!\!\!\!\! \kappa \,\di\Haus \\
& = \En (t_{k,i-1}; \Gamma_{\eps, k, i-1}) + \int_{\Om}\C\e u^{\eps}_{k,i-1}: \e (w_{k,i}- w_{k, i-1})\,\di x + \frac{1}{2}\int_{\Om} \C\e (w_{k,i}-w_{k,i-1}) : \e(w_{k,i}- w_{k,i-1})\,\di x \\
 & \qquad - \int_{\Om} f_{k,i} \cdot (w_{k,i} - w_{k,i-1})\,\di x -\int_{\Om} (f_{k,i} - f_{k,i-1})\cdot u^{\eps}_{k,i-1}\,\di x -\int_{\partial_S\Om} g_{k,i}\cdot (w_{k,i}-w_{k,i-1})\,\di \Haus \\
 & \qquad  - \int_{\partial_{S}\Om} (g_{k,i}- g_{k,i-1})\cdot u^{\eps}_{k,i-1}\,\di\Haus+\int_{\Gamma_{\eps,k,i-1}} \!\!\!\!\!\!\! \kappa\,\de\Haus\,.
\end{split}
\end{displaymath}
Iterating the above chain of inequalities for~$i\in\{1,\ldots, k\}$ and using~(H2) we deduce~\eqref{e:bound1.2}, which, together with~(b), implies~(c).
\end{proof}

In the following proposition we discuss  the properties of  the limit of the sequence~$\Gamma_{\eps, k}$ as~$k\to\infty$.

\begin{proposition} \label{prop:conv-n}
For every $\eps>0$ there exists a subsequence (not relabeled)
of $\Ga_{\eps,k}$ and a set function $t\mapsto\Ga_\eps(t)\in\adcr$
such that $\Ga^{m}_{\eps,k}(t)$ converges to $\Ga^{m}_\eps(t)$ in the Hausdorff metric for every $t\in[0,T]$ and every~$m\in\{1,\ldots,M\}$, and
\begin{itemize}
\item[$(a)$] $\Ga_\eps$ is nondecreasing in time;

\item[$(b)$] $l^{m}_{\eps,k}\wto l^{m}_\eps$ weakly in~$H^1(0,T)$ and $l^{m}_{\eps,k}(t) \to l^{m}_\eps (t)$ for every $t\in[0,T]$ and every~$m$, where $l^{m}_\eps(t)\coloneqq \Haus (\Ga^{m}_\eps(t))$;

\item[$(c)$]   $\nabla u_{\eps,k}(t)\to\nabla u_\eps(t)$ strongly in $L^2(\Om;\mathbb{M}^2)$ for every $t\in[0,T]$, where $u_\eps(t) \coloneqq u(t;\Ga_\eps(t))$;

\item[$(d)$]  $\G_{\eps,k}(t)\to \G_\eps(t)$ for every $t\in[0,T]$, where $\G_\eps(t)\coloneqq \G(t;\Ga_\eps(t))$;

\item[$(e)$] $\G_{\eps,k} \to \G_\eps$ in~$L^{2}(0,T)$.
\end{itemize}
Moreover, along a suitable (not relabeled) subsequence, we have
\begin{itemize}
\item[$(f)$] $\eps\|{\dot l^{m}_\eps}\|_{2}^2$ is uniformly bounded in~$\eps$ for every~$m\in\{1,\ldots,M\}$;

\item[$(g)$] $\|{\nabla u_\eps(t)}\|_2$ is uniformly bounded in~$\eps$ and~$t$.
\end{itemize}
\end{proposition}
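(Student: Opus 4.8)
The plan is to combine the a priori bounds of Proposition~\ref{prop:bounds} with the Hausdorff compactness of $\adcr$ (Theorem~\ref{thm:comp-adcr}) and with the continuity of the elastic energy and of the energy release rate proven in Lemma~\ref{l.continuity} and Remark~\ref{r.ERR2}. Throughout, $\eps>0$ is fixed and $C$ denotes a constant that may change from line to line. I expect the only genuinely delicate point to be the extraction of a \emph{single} subsequence along which $\Ga^m_{\eps,k}(t)$ converges for \emph{every} $t\in[0,T]$, together with the reconciliation of the piecewise constant crack interpolation $\Ga_{\eps,k}$ with the piecewise affine length interpolation $l^m_{\eps,k}$; everything else is a direct application of the results already at our disposal.

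First, for each $m$ the interpolant $l^m_{\eps,k}$ is nondecreasing and, by the uniform length bound on $\adcr$ together with Proposition~\ref{prop:bounds}(c), bounded in $H^1(0,T)$; hence, along a subsequence (independent of the finitely many indices $m$), $l^m_{\eps,k}\wto l^m_\eps$ weakly in $H^1(0,T)$ and uniformly on $[0,T]$, with $l^m_\eps$ nondecreasing. Since $\|\dot l^m_{\eps,k}\|_{L^2}\le C$ and $\Haus(\Ga^m_{\eps,k}(t))=l^m_{\eps,k}(t_k(t))$ on each subinterval, we get $|\Haus(\Ga^m_{\eps,k}(t))-l^m_{\eps,k}(t)|\le C\,(t_k(t)-t)^{1/2}\to0$ uniformly in $t$ by \eqref{defin:time-disc}, so $\Haus(\Ga^m_{\eps,k}(t))\to l^m_\eps(t)$ uniformly. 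The set functions $t\mapsto\Ga_{\eps,k}(t)$ being nondecreasing for the inclusion, a diagonal argument over a countable dense set $D\subset[0,T]$ with $0,T\in D$, combined with Theorem~\ref{thm:comp-adcr}, yields a further subsequence along which $\Ga_{\eps,k}(q)\to\Ga_\eps(q)\in\adcr$ in the Hausdorff metric for every $q\in D$, with $\Haus(\Ga^m_\eps(q))=l^m_\eps(q)$ (from Theorem~\ref{thm:comp-adcr} and lower semicontinuity of length), and $q\mapsto\Ga_\eps(q)$ nondecreasing. For a general $t$, every Hausdorff cluster point $A$ of $(\Ga_{\eps,k}(t))_k$ satisfies $\Ga^m_\eps(q)\subseteq A^m\subseteq\Ga^m_\eps(q')$ for all $q<t<q'$ in $D$, which by continuity of $l^m_\eps$ squeezes $\Haus(A^m)$ to $l^m_\eps(t)$ and forces $A^m=\overline{\bigcup_{q\in D,\,q<t}\Ga^m_\eps(q)}$; the cluster point being thus unique, $\Ga^m_{\eps,k}(t)\to\Ga^m_\eps(t):=\overline{\bigcup_{q\in D,\,q<t}\Ga^m_\eps(q)}$ for every $t$ and every $m$. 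This proves the convergence statement and (a), and (b) follows from $l^m_{\eps,k}\wto l^m_\eps$ in $H^1(0,T)$ together with $l^m_\eps(t)=\Haus(\Ga^m_\eps(t))$.

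For (c)--(e), fix $t\in[0,T]$. Since $t_k(t)\to t$ and $f\in AC([0,T];L^2)$, $g\in AC([0,T];L^2)$, $w\in AC([0,T];H^1(\Om\setdiff\Ga_0))$, we have $f(t_k(t))\to f(t)$ strongly in $L^2$, $g(t_k(t))\to g(t)$ strongly in $L^2$, $w(t_k(t))\to w(t)$ strongly in $H^1(\Om\setdiff\Ga_0)$, while $\Ga_{\eps,k}(t)\to\Ga_\eps(t)$ in the Hausdorff metric; Lemma~\ref{l.continuity} (with $\C$ fixed) then gives $\nabla u_{\eps,k}(t)\to\nabla u_\eps(t)$ strongly in $L^2(\Om;\M^2)$, which is (c), and Remark~\ref{r.ERR2}(a) gives $\G_{\eps,k}(t)\to\G_\eps(t)$, which is (d). For (e), by Remark~\ref{r.ERR2}(b) and the uniform bound on $\|u_{\eps,k}(t)\|_{H^1}$ from Proposition~\ref{prop:bounds}(b) we have $0\le\G^m_{\eps,k}(t)\le C$ uniformly in $k$ and $t$, so (d) and dominated convergence give $\G_{\eps,k}\to\G_\eps$ in $L^2(0,T)$.

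Finally, for (f)--(g) note that using $v=w(t_{k,i})$ as a competitor in the elastic minimization~\eqref{pb:min} bounds $\En(t_{k,i};\Ga_{\eps,k,i})$ by a constant independent of $\eps$; together with Proposition~\ref{p.korn2}, whose Korn constant is likewise $\eps$-independent, this gives $\|u_{\eps,k}(t)\|_{H^1}\le C$ uniformly in $\eps$, $k$, $t$, as already used in \eqref{e:bound1.1}. Inserting this bound into \eqref{e:bound1.2} at $t=T$, where the right-hand side is controlled uniformly in $\eps$ and $k$ by $\F(0;\Ga_0)$, the $L^1$-in-time norms of $\dot f$, $\dot g$, $\dot w$, and $\sum_i\|w_{k,i}-w_{k,i-1}\|^2_{H^1}$ (which actually vanishes as $k\to\infty$), we obtain $\eps\sum_{m=1}^M\|\dot l^m_{\eps,k}\|_2^2\le C$ uniformly; passing to the weak $H^1$ limit in $k$ and using the lower semicontinuity of the $L^2$ norm yields $\eps\|\dot l^m_\eps\|_2^2\le C$ uniformly in $\eps$, which is (f). The same uniform $H^1$ bound on $u_{\eps,k}$ and the strong convergence (c) then give $\|\nabla u_\eps(t)\|_2\le C$ uniformly in $\eps$ and $t$, which is (g).
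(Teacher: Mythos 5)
Your proof is correct and follows essentially the same route as the paper: Helly-type compactness for the monotone set functions together with the $H^1$ bound on the lengths, Lemma~\ref{l.continuity} and Remark~\ref{r.ERR2} for (c)--(e), and the energy estimate \eqref{e:bound1.2} at $t=T$ combined with Korn's inequality for (f)--(g); you are in fact more explicit than the paper on the diagonal/monotonicity argument giving convergence at every $t$ and on the reconciliation of the piecewise-constant crack with the piecewise-affine length. The only point you gloss over is the $\eps$-uniformity of the Korn constant in (g): Proposition~\ref{p.korn2} is stated along a Hausdorff-convergent sequence, so, as the paper does, one should extract a further subsequence with $\Gamma_\eps(T)\to\widehat\Gamma$ in $\adcr$ before invoking it — which is precisely why (f)--(g) are stated only along a suitable $\eps$-subsequence.
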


\begin{proof}
 For brevity, in the following we will not relabel  subsequences.  
For $\eps>0$ let us consider the subsequence~$\Gamma_{\eps,k}$ detected in~(b) and~(c) of Proposition~\ref{prop:bounds}. Since~$\Gamma_{\eps,k}$ is a sequence of increasing set functions with uniformly bounded length, there exists a nondecreasing set function~$\Gamma_{\eps}\colon [0,T]\to \adcr$ such that, up to a further subsequence,~$\Gamma_{\eps, k}(t)$ converges to~$\Gamma_{\eps}(t)$ in the Hausdorff metric of sets for every~$t\in[0,T]$. Hence, for every~$m\in\{1,\ldots,M\}$ and every~$t\in[0,T]$ it holds $\Gamma^{m}_{\eps,k}(t) \to \Gamma^{m}_{\eps}(t)$.

For~$\eps>0$ fixed, by Proposition~\ref{prop:bounds} we have that~$l^{m}_{\eps, k} \in H^{1}(0,T)$ is bounded w.r.t.~$k$ and~$m$. Therefore, for every~$m\in\{1,\ldots,M\}$ the sequence~$l^{m}_{\eps,k}$ converges weakly in~$H^{1}(0,T)$ to a nondecreasing function~$l^{m}_{\eps}$. Up to a further subsequence, we may assume that~$l^{m}_{\eps,k}(t) \to l^{m}_{\eps}(t)$ for every~$t\in[0,T]$ and every~$m$. In particular, $l^{m}_{\eps}(t) = \Haus(\Gamma^{m}_{\eps}(t))$, so that~(b) is proven. We also notice that, because of the continuity of~$l^{m}_{\eps}$, we have that~$\underline{\Gamma}_{\eps, k}(t)\to \Gamma_{\eps}(t)$ in the Hausdorff metric as $k\to\infty$.

The $L^{2}$-convergence of~$\nabla u_{\eps,k}(t)$ to~$ \nabla u_\eps(t)$ is a consequence of the convergence of~$\Gamma_{\eps, k}(t)$ to~$\Gamma_{\eps}(t)$ and of Lemma~\ref{l.continuity}. In a similar way, since~$\underline{\Gamma}_{\eps, k}(t)$ converges to~$\Gamma_{\eps}(t)$,~$\nabla \underline{u}_{\eps, k}(t) \to \nabla u_{\eps}(t)$ in~$L^{2}(\Om;\mathbb{M}^{2})$. Moreover, by Remark~\ref{r.ERR2} we have that~$\G_{\eps, k}(t) \to \G(t ; \Gamma_{\eps}(t)) =: \G_{\eps}(t)$ for every~$t\in[0,T]$, so that~(d) holds. Being~$\|\nabla{u}_{\eps, k}(t)\|_{2}$ and~$\|u_{\eps, k}(t)\|_{2}$ bounded uniformly w.r.t.~$t$ and~$k$, again by Remark~\ref{r.ERR2} we infer that~$\G_{\eps, k}(t)$ is bounded, so that~$\G_{\eps, k}\to \G_{\eps}$ in~$L^{2}(0,T)$ and~(e) is concluded.

In order to prove~(f) and~(g), we employ Proposition~\ref{prop:bounds}, obtaining
\begin{equation}\label{e:limiteps}
\| \e u_{\eps}(t)\|_{2}^2 - \|u_{\eps}(t)\|_{H^1} \leq C \qquad\text{for every~$\eps>0$ and every~$t\in[0,T]$}\,,
\end{equation}
where~$C>0$ is independent of~$t$ and~$\eps$. Arguing as in the proof of Proposition~\ref{prop:bounds}, we have that~$u_{\eps}(t)\in H^{1}(\Om\setminus\Gamma_{\eps}(T);\R^{2})$ for every~$t\in[0,T]$. Since~$\Gamma_{\eps}(T)\in \adcr$ has a uniformly bounded length, we may assume that, up to a subsequence, $\Gamma_{\eps}(T)\to \widehat{\Gamma} \in\adcr$ in the Hausdorff metric of sets. Thus, we can apply Proposition~\ref{p.korn2} to~$\Gamma_{\eps}(T)$,~$\widehat{\Gamma}$, and~$u_{\eps}(t)$, to deduce from~\eqref{e:limiteps} that~$\|\nabla{u}_{\eps}(t)\|_{2}$ is bounded uniformly w.r.t.~$\eps$ and~$t$, so that~(g) holds.

Finally, we pass to the liminf in~\eqref{e:bound1.2} for~$t=T$, obtaining
\[
\begin{split}
  \frac{\eps}{2} \sum_{m=1}^{M} \int_{0}^{T} & |\dot{l}^{m}_{\eps}(t)|^{2}\,\di t   + C_{1}(\| \e u_{\eps}(T) \|_{2} - \|u_{\eps}(T)\|_{2})  \\
&  \leq \En(0;\Gamma_{0}) + \int_{\Gamma_{0}} \kappa\,\di \Haus + \int_{0}^{T} \int_{\Om}\C \e u_{\eps}(t) : \e \dot{w}(t)\,\di x \, \di t - \int_{0}^{T} \int_{\Om} \dot{f} (t) \cdot u_{\eps} (t)\, \di x \, \di t  \\
& \qquad  -\int_{0}^{T}\int_{\Om} f (t) \cdot \dot{w}(t)\,\di x\,\di t - \int_{0}^{T} \int_{\partial_{S}\Om} \dot{g}(t)\cdot u_{\eps}(t)\,\di \Haus\,\di t  - \int_{0}^{T}\int_{\partial_{S}\Om} g (t) \cdot \dot{w}(t)\,\di\Haus\,\di t\,.
\end{split}
\]
By the boundedness of~$\|\nabla{u}_{\eps}(t)\|_{2}$ and of~$\|u_{\eps}(t)\|_{2}$ we immediately get~(f), and the proof is thus concluded.
\end{proof}

We are now in a position to prove Proposition~\ref{prop:viscous}.

\begin{proof}[Proof of Proposition~\ref{prop:viscous}]
Let~$\Gamma_{\eps}$,~$l^{m}_{\eps}$, and~$\G^{m}_{\eps}$ be the functions determined in Proposition~\ref{prop:conv-n}. Since~$l^{m}_{\eps}$ is nondecreasing,~(G1)$_{\eps}$ is satisfied. In order to prove~(G2)$_{\eps}$ let us consider~$\psi\in L^{2}(0,T)$ with~$\psi\geq0$. By~(G2)$_{k}$ we have
\begin{equation}\label{e:bah}
\int_{0}^{T} (\kappa(P^{m}_{\eps,k}(t)) - \G^{m}_{\eps,k} (t) + \eps \dot{l}^{m}_{\eps,k}(t)) \psi(t)\,\di t\geq 0\,.
\end{equation}
From the Hausdorff convergence of~$\Gamma^{m}_{\eps,k}(t)$ to~$\Gamma^{m}_{\eps}(t)$ it follows that~$P^{m}_{\eps,k}(t)\to P^{m}_{\eps}(t)$ for every~$t\in[0,T]$ and every~$m$, where~$P^{m}_{\eps}(t)$ stands for the tip of~$\Gamma^{m}_{\eps}(t)$.
By hypothesis~(H4) we have that~$\kappa(P^{m}_{\eps, k}) \to \kappa(P^{m}_{\eps})$ in~$L^{2}(0,T)$. Hence, passing to the limit in~\eqref{e:bah} as $k\to\infty$ and taking into account~(e) of Proposition~\ref{prop:conv-n} we get
\begin{displaymath}
\int_{0}^{T} (\kappa(P^{m}_{\eps}(t)) - \G^{m}_{\eps} (t) + \eps \dot{l}^{m}_{\eps}(t)) \psi(t)\,\di t\geq 0\,.
\end{displaymath} 
By the arbitrariness of~$\psi\in L^{2}(0,T)$, $\psi\geq0$, we infer~(G2)$_{\eps}$.

As for~(G3)$_{\eps}$, we integrate~(G3)$_{k}$ over~$[0,T]$ and pass to the liminf as~$k\to\infty$. By~(b) and~(e) of Proposition~\ref{prop:conv-n} and by the convergence of~$\kappa(P^{m}_{\eps, k})$ to~$\kappa(P^{m}_{\eps})$ we obtain
\[
\int_{0}^{T} \dot{l}^{m}_{\eps}(t) (\kappa(P^{m}_{\eps}(t)) - \G^{m}_{\eps}(t) + \eps \dot{l}^{m}_{\eps}(t))\,\di t\leq 0\,.
\]
Combining the previous inequality with~(G1)$_{\eps}$ and~(G2)$_{\eps}$ we deduce~(G3)$_{\eps}$. Finally, the uniform boundedness of~$\eps \|\dot{l}^{m}_{\eps}\|_{2}^{2}$ has been stated in~(f) of Proposition~\ref{prop:conv-n}.
\end{proof}

\begin{remark}\label{r.energybalance}
Let~$\Gamma_{\eps}$ be as in Proposition~\ref{prop:viscous}. Then, for every~$t\in [0,T]$ it holds
\begin{equation}\label{e:vbalance}
\begin{split}
\F (t; \Gamma_{\eps}(t)) = \ & \F(0;\Gamma_{0}) - \sum_{m=1}^M \int_{0}^{t} (\G^{m}_{\eps}(\tau) - \kappa(P^{m}_{\eps})) \dot{l}_{\eps}^{m}(\tau)\,\di\tau + \int_{0}^{t}\int_{\Om}\C\e u_{\eps}(\tau) : \e\dot{w}(\tau)\,\di x\,\di \tau \\
& - \int_{0}^{t}\int_{\Om}\dot{f}(\tau)\cdot u_{\eps}(\tau)\,\di x\,\di\tau - \int_{0}^{t}\int_{\Om} f(\tau)\cdot \dot{w}(\tau)\,\di x\,\di \tau \\
& - \int_{0}^{t} \int_{\partial_{S}\Om} \dot g(\tau)\cdot u(\tau)\,\di \Haus \,\di \tau- \int_{0}^{t}\int_{\partial_{S}\Om} g(\tau)\cdot\dot{w}(\tau)\,\di \Haus \,\di\tau\,.
\end{split}
\end{equation}
Indeed, being~$l_{\eps}\in H^{1}([0,T];\R^{M})$, the function~$t\mapsto \F(t,\Gamma_{\eps}(t)) = \En(t;\Gamma_{\eps}(t)) + \mathcal{K}(\Gamma_{\eps}(t))$ belongs to~$H^{1}(0,T)$ with
\begin{displaymath}
\frac{\di}{\di t} \F(t;\Gamma_{\eps}(t)) = \partial_{t} \En(t; \Gamma_{\eps}(t)) - \sum_{m=1}^M
\left[ \G^{m}_\eps(t)- \kappa(P^{m}_{\eps}(t)) \right]\,\dot{l}^{m}_{\eps}(t)\qquad\text{for a.e.~$t\in[0,T]$}\,.
\end{displaymath}
\end{remark}

We conclude with the proof of Theorem~\ref{thm:bv}.

\begin{proof}[Proof of Theorem~\ref{thm:bv}]
For~$\eps>0$ and~$m\in\{1,\ldots,M\}$ let~$\Gamma_{\eps}$,~$\Gamma^{m}_{\eps}$, and~$l^{m}_{\eps}$ be the viscous evolutions determined in Proposition~\ref{prop:viscous}.  Let us consider, without relabeling, the $\eps$-subsequence satisfying~(f) and~(g) of Proposition~\ref{prop:conv-n}.  Since~$\Gamma_{\eps}$ is a sequence of nondecreasing set functions and $\Haus(\Gamma_{\eps}(t))$ is uniformly bounded w.r.t.~$t\in[0,T]$ and~$\eps>0$, there exists a nondecreasing set function~$\Gamma\colon [0,T]\to \adcr$ such that~$\Gamma_\eps(t) \to \Gamma(t)$ in the Hausdorff metric of sets for every~$t\in[0,T]$. In particular,~$\Gamma^{m}_{\eps}(t) \to \Gamma^{m}(t)$ for every~$t$ and every~$m\in\{1,\ldots, M\}$, where~$\Gamma(t)= \bigcup_{m=1}^{M} \Gamma^{m}(t)$. Moreover, being~$l^{m}_{\eps}$ a sequence of bounded nondecreasing functions, we may assume that, up to a further subsequence, $l^{m}_{\eps}(t) \to l^{m}(t)$ for every~$t\in[0,T]$ and~$l^{m}_{\eps}\to l^{m}$ in~$L^{2}(0,T)$. In particular,~$l^{m}(t) = \Haus(\Gamma^{m}(t))$ and~(G1) is proven.

In order to show~(G2), let us consider~$\psi\in L^{2}(0,T)$ with~$\psi\geq0$. In view of~(G2)$_{\eps}$ we have
\begin{equation}\label{e:al}
\int_{0}^{T} \big (\kappa(P^{m}_{\eps}(t) )- \G^{m}_{\eps}(t) + \eps \dot{l}_{\eps}^{m}(t)\big ) \psi(t)\,\di t \geq0\,.
\end{equation}
Since~$\Gamma^{m}_{\eps}(t) \to \Gamma^{m}(t)$, we have that~$P^{m}_{\eps}(t) \to P^{m}(t)$ for every~$t$ and every~$m$, where~$P^{m}(t)$ is the tip of~$\Gamma^m(t)$. Thus, by hypothesis~(H4) we get that~$\kappa(P^{m}_{\eps}) \to \kappa(P^{m})$ in~$L^{2}(0,T)$ for every~$m$. From~(e) and~(f) of Proposition~\ref{prop:conv-n} we deduce that~$\eps \dot{l}^{m}_{\eps}\to 0$ and~$\G^{m}_{\eps} \to \G^{m}$ in~$L^{2}(0,T)$. Hence, passing to the limit in~\eqref{e:al} we get
\[
\int_{0}^{T} \big (\kappa(P^{m}(t) )- \G^{m}(t) \big ) \psi(t)\,\di t \geq0 \qquad \text{for every $\psi\in L^{2}(0,T)$, $\psi\geq0$}\,.
\]
As a consequence,~$\kappa(P^{m}(t) )- \G^{m}(t)\geq 0$ for a.e.~$t\in[0,T]$. By continuity, this inequality holds in all the continuity points of~$\Gamma^{m}(t)$. Hence,~(G2) is proven.

As for~(G3), we integrate~(G3)$_{\eps}$ over the interval~$[0,T]$ and notice that the term~$\eps(\dot{l}^{m}_{\eps})^{2}$ is positive, so that
\[
\int_{0}^{T}\dot{l}^{m}_{\eps}(t) ( \kappa(P^{m}_{\eps}(t)) - \G^{m}_{\eps}(t))\,\di t\leq 0\,.
\]
Passing to the limit in the previous inequality we get
\begin{equation}\label{e:bah4}
\int_{0}^{T}\dot{l}^{m} (t) ( \kappa(P^{m} (t)) - \G^{m} (t))\,\di t\leq 0\,.
\end{equation}
Combining~\eqref{e:bah4} with~(G1) and~(G2) we deduce~(G3).
\end{proof}

\section{Parametrized evolutions}\label{s:parametric}

This section is devoted to the proof of Theorem~\ref{thm:param}. The strategy is to perform a change of variables which transforms the lengths~$l^{m}_{\eps}$ obtained in Proposition~\ref{prop:viscous} in absolutely continuous functions. Roughly speaking, this is done by a parametrization of time on the jump points of the viscous solution~$l^{m}_{\varepsilon}$. 

Let us fix the sequence~$\eps\to 0$ determined in Proposition~\ref{prop:conv-n} and Theorem~\ref{thm:bv}. For $t\in[0,T]$ we set
\begin{equation}\label{II-81}
\sigma_{\varepsilon}(t) \coloneqq  t+\sum_{m=1}^{M}(l^{m}_{\varepsilon}(t)-l_{0}^{m})\,.
\end{equation}
Thanks to the properties of~$l^{m}_{\varepsilon}$  (see Proposition~\ref{prop:viscous}),~$\sigma_{\varepsilon}$ is strictly increasing, continuous, and $\dot{\sigma}_{\varepsilon}(t)\geq1$ for every $\varepsilon>0$ and a.e.~$t\in[0,T]$. Therefore,~$\sigma_{\eps}$ is invertible and we denote its inverse with~$\tilde{t}_{\varepsilon}\colon [0, \sigma_{\eps}(T)]\to [0, T]$. We deduce that~$\tilde{t}_{\varepsilon}$ is strictly increasing, continuous, and $0<\tilde{t}'_{\varepsilon}(\sigma)\leq1$ for every $\varepsilon>0$ and a.e.~$\sigma\in[0, \sigma_{\eps}(T) ]$, where the symbol $'$ stands for the derivative with respect to~$\sigma$.

For $m=1,\ldots,M$ and $\sigma\in[0, \sigma_{\eps}(T)]$, we set
\begin{eqnarray*}
&& \displaystyle \tilde{l}^{m}_{\varepsilon}(\sigma) \coloneqq l^{m}_{\varepsilon}(\tilde{t}_{\varepsilon}(\sigma))\,,\qquad \tilde{l}_{\varepsilon}(\sigma) \coloneqq (\tilde{l}^{1}_{\varepsilon}(\sigma),\ldots,\tilde{l}^{M}_{\varepsilon}(\sigma))\,,\qquad  \tilde{l}_{\varepsilon}'(\sigma) \coloneqq ((\tilde{l}_{\varepsilon}^{1})'(\sigma),\ldots,(\tilde{l}_{\varepsilon}^{M})'(\sigma))\,,\\
&& \widetilde{\Gamma}_{\eps} (\sigma )\coloneqq \Gamma_{\eps}(\tilde{t}_{\eps}(\sigma))\,,\qquad \widetilde \Gamma^{m}_{\eps}(\sigma) \coloneqq \Gamma^{m}_{\eps}(\tilde{t}_{\eps}(\sigma))\,,\qquad \widetilde{P}^{m}_{\eps}(\sigma) \coloneqq P^{m}_{\eps}(\tilde{t}_{\eps}(\sigma)) \,.
\end{eqnarray*}

By~\eqref{II-81} we have $\sigma=\tilde{t}_{\varepsilon}(\sigma)+|\tilde{l}_{\varepsilon}(\sigma)|_{1}-|l_{0}|_{1}$. Differentiating this relation we get
\begin{equation}\label{II-82}
\tilde{t}_{\varepsilon}'(\sigma)+|\tilde{l}_{\varepsilon}'(\sigma)|_{1}=1
\end{equation}
for every~$\varepsilon>0$ and a.e.~$\sigma\in[0, \sigma_{\eps}(T)]$. By~\eqref{II-82} and the monotonicity of~$\tilde{l}_{\varepsilon}^{m}$ we have $0\leq (\tilde{l}_{\varepsilon}^{m})'(\sigma) \leq 1 $ for every $\varepsilon>0$, every $m=1,\ldots,M$, and a.e.~$\sigma\in[0, \sigma_{\eps}(T)]$. Moreover,~$\tilde{t}_{\varepsilon}$ and~$\tilde{l}_{\varepsilon}$ are Lipschitz continuous.

We define $\widetilde{\G}_{\varepsilon}^{m}(\sigma)\coloneqq \G^{m}(\tilde{t}_{\varepsilon}(\sigma);\widetilde{\Gamma}_{\varepsilon}(\sigma))$ for $\sigma\in[0, \sigma_{\eps}(T)]$ and $ \bar S \coloneqq \sup_{\varepsilon>0}\sigma_{\eps}(T)$, which is bounded by a constant depending on~$T$ and on the class~$\adcr$. In order to deal with functions defined on the same interval, we extend $\tilde{t}_{\varepsilon}$, $\tilde{l}_{\varepsilon}$, $\widetilde\Gamma_{\eps}$, $\widetilde{\Gamma}^{m}_{\eps}$, $\tilde{t}_{\varepsilon}'$, and $\tilde{l}_{\varepsilon}'$ on $(\sigma_{\eps}(T) , \bar S]$ by $\tilde{t}_{\varepsilon}(\sigma)\coloneqq \tilde{t}_{\varepsilon}(\sigma_{\eps}(T))$, $\tilde{l}_{\varepsilon}(\sigma)\coloneqq \tilde{l}_{\varepsilon}(\sigma_{\eps}(T))$, $\widetilde{\Gamma}_{\eps}(\sigma)\coloneqq \widetilde{\Gamma}_{\eps}(\sigma_{\eps}(T))$, $\widetilde{\Gamma}^{m}_{\eps}(\sigma) \coloneqq \widetilde{\Gamma}^{m}_{\eps}(\sigma_{\eps}(T))$, $\tilde{t}_{\varepsilon}'(\sigma)\coloneqq 0$, and~$\tilde{s}_{\varepsilon}'(\sigma)\coloneqq 0$. 

Recalling that $\tilde{t}_{\varepsilon}'(\sigma)>0$ on $[0,\sigma_{\eps}(T)]$, the Griffith's criterion stated in Proposition~\ref{prop:viscous} reads in the new variables as
\begin{equation}\label{II-85}
\left\{ \begin{array}{lll}
 (\tilde{l}_{\varepsilon}^{m})' (\sigma ) \geq 0 \,, \\[1mm]
 \kappa(\widetilde{P}^{m}_{\eps}(\sigma)) \tilde{t}_{\varepsilon}'(\sigma) - \widetilde{\G}^{m}_{ \varepsilon}(\sigma) \, \tilde{t}_{\varepsilon}' (\sigma) + \varepsilon (\tilde{l}^{m}_{\varepsilon})' (\sigma) \geq 0\,, \\[1mm]
 \,(\tilde{l}^{m}_{\varepsilon})'(\sigma) \big( \kappa ( \widetilde{P}^{m}_{\eps} (\sigma) ) \tilde{t}_{\varepsilon}'(\sigma) - \widetilde{\G}^{m}_{ \varepsilon} (\sigma) \, \tilde{t}_{\varepsilon}' (\sigma) + \varepsilon (\tilde{l}^{m}_{\varepsilon})' (\sigma)\big )= 0\,,
 \end{array}\right.
\end{equation} 
for every $m$, every $\varepsilon$, and a.e.~$\sigma\in[0,\bar S]$.

Finally, we observe that by~(f) of Proposition~\ref{prop:conv-n}
\[
\begin{split}
\varepsilon \int_{0}^{ \sigma_{\eps} ( T ) }\!\! | (\tilde{l}^{m}_{\varepsilon})'(\sigma)|_{2}^{2}\,\di \sigma& = \varepsilon \int_{0}^{\sigma_{\eps}(T)} \! |\dot{l}^{m}_{\varepsilon}\,(\tilde{t}_{\varepsilon}(\sigma))|_{2}^{2}(\tilde{t}'_{\varepsilon})^{2}(\sigma)\,\di \sigma\\
&\displaystyle\leq\varepsilon \int_{0}^{\sigma_{\eps}(T)}\!|\dot{l}^{m}_{\varepsilon}(\tilde{t}_{\varepsilon}(\sigma))|_{2}^{2}\,\tilde{t}'_{\varepsilon}(\sigma)\,\di \sigma
=\varepsilon \int_{0}^{T}\!|\dot{l}^{m}_{\varepsilon}(t)|_{2}^{2}\,\di t\leq C
\end{split}
\]
uniformly in~$\eps$ and~$m\in\{1,\ldots,M\}$. Therefore, $\varepsilon (\tilde{l}^{m}_{\varepsilon})' \to 0$ in~$L^{2}( 0, \bar S )$.

Passing to the limit as $\eps\to 0$, we are now able to prove Theorem~\ref{thm:param}, showing that the parametrized solution~$\widetilde{\Gamma}$ satisfies a generalized Griffith's criterion.

\begin{proof}[Proof of Theorem~\ref{thm:param}]
%
%
%

Since $\widetilde{\Gamma}_{\eps}\colon[0, \bar S]\to \adcr$ is a nondecreasing set function with uniformly bounded length, there exists~$\widetilde{\Gamma}\colon [0,\bar S]\to \adcr$ such that, up to a not relabeled subsequence,~$\widetilde{\Gamma}_{\eps}(\sigma) \to \widetilde{\Gamma}(\sigma)$ and $\widetilde{\Gamma}^{m}_{\eps}(\sigma) \to \widetilde{\Gamma}^{m}(\sigma)$ in the Hausdorff metric of sets for every~$\sigma\in[0, \bar S]$ and every~$m\in\{1,\ldots,M\}$. We denote with~$\widetilde{P}^{m}(\sigma)$ the tip of~$\widetilde{\Gamma}^{m}(\sigma)$ and we notice that~$\widetilde{P}^{m}_{\eps}(\sigma) \to \widetilde{P}^{m}(\sigma)$ for every~$\sigma\in[0,\bar S]$ and every~$m\in\{1,\ldots,M\}$. 

Being $\tilde{t}_{\varepsilon},\tilde{l}^{m}_{\varepsilon}$ bounded in~$W^{1,\infty}(0, \bar S)$, up to a further subsequence we have that~$\tilde{t}_{\varepsilon}$ and~$\tilde{l}^{m}_{\varepsilon}$ converge weakly* in~$W^{1,\infty}( 0, \bar S )$ to some functions~$\tilde{t}$ and~$\tilde{l}^{m}$, respectively, As a consequence, we have that~$\tilde{l}^{m}(\sigma) = \Haus(\widetilde{\Gamma}^{m}(\sigma))$, so that~$\widetilde{\Gamma}^{m}\colon [0, S]\to \adcr$ is continuous in the Hausdorff metric of sets. We can also assume that $\sigma_{\eps}(T) \to S$ and $\tilde{t},\tilde{l}^{m}\in W^{1,\infty}( 0, S )$. Moreover, writing~\eqref{II-82} in an integral form and passing to the limit, we deduce that for a.e.~$\sigma\in[0, S]$
\begin{equation}\label{II-84}
\tilde{t}'(\sigma)+|\tilde{l}'(\sigma)|_{1}=1\,,
\end{equation}
where we have set~$\tilde{l}(\sigma)\coloneqq (l^{1}(\sigma),\ldots, \tilde{l}^{M}(\sigma))$. For $m=1\ldots,M$ and $\sigma\in[0, S]$ we define,
\begin{displaymath}
\widetilde{\G}^{m}(\sigma) \coloneqq \G^{m}(\tilde{t}(\sigma); \tilde\Gamma(\sigma) )\,,\qquad \widetilde{\G}(\sigma) \coloneqq (\widetilde{\G}^{1}(\sigma) , \ldots, \widetilde{\G}^{M}(\sigma) ) \,.
\end{displaymath}
We notice that, by Remark~\ref{r.ERR2}, $\widetilde{\G}_{\eps}(\sigma)$ converges to~$\widetilde{\G}(\sigma)$ for every~$\sigma\in[0, S]$ and~$\widetilde{\G}_{\eps} \to \widetilde{\G}$ in~$L^{2}( 0, S)$, as $\eps \to 0$.

By the monotonicity of~$\tilde{t}$ and~$\tilde{l}^{m}$, we have~$\tilde{t}'(\sigma)\geq0$ and~$(\tilde{l}^{m})'(\sigma)\geq0$ for every~$m$ and a.e.~$\sigma\in[0,  S]$. Moreover, by~\eqref{II-84} they can not be simultaneously zero.

%
Let us fix $m\in\{1,\ldots,M\}$ and $\psi\in L^{2}( 0, S )$ with $\psi\geq0$. Thanks to~\eqref{II-85}, for every~$\eps$ we have
\begin{equation}\label{II-86}
\int_{0}^{S }( \kappa(\widetilde{P}^{m}_{\eps}(\sigma)) \tilde{t}_{\varepsilon}'(\sigma) - \widetilde{\G}^{m}_{\varepsilon}(\sigma)\tilde{t}_{\varepsilon}'(\sigma) + \varepsilon (\tilde{l}^{m}_{\varepsilon})' ( \sigma ) ) \, \psi ( \sigma )\,\di \sigma \geq 0 \,.
\end{equation}

Since~$\tilde{t}'_{\varepsilon}$ converges to~$\tilde{t}'$ weakly* in $L^{\infty}( 0, S )$, $\varepsilon (\tilde{l}^{m}_{\varepsilon})' \to 0$ in $L^{2}( 0, S)$, $\widetilde{P}^{m}_{\eps}(\sigma) \to \widetilde{P}^{m}(\sigma)$ for $\sigma\in [0,S]$, and~$\widetilde{\G}^{m}_{\eps} \to \widetilde{\G}^{m}$ in~$L^{2}(0,S)$, passing to the limit in~\eqref{II-86} as $\eps \to 0$ we get
\[
   \int_{0}^{S} (  \kappa(\widetilde{P}^{m} (\sigma)) \tilde{t} '(\sigma)-\widetilde{\G}^{m}  (\sigma) \tilde{t} ' (\sigma) ) \, \psi (\sigma) \, \di \sigma \geq 0\,,
\]
which implies~(pG2).

We notice that if (pG1), (pG2) and~\eqref{II-84} hold, then (pG3) and (pG4) are equivalent to the following property:
\[
\text{if $\widetilde{\G}^{m}(\bar{\sigma}) < \kappa(\widetilde P^{m}(\bar{\sigma}))$ for some~$m$ and some~$\bar{\sigma}\in (0, S)$, then~$\tilde{l}^{m}$ is locally constant around~$\bar{\sigma}$.}
\]
Let us therefore assume that~$\widetilde{\G}^{m}(\bar{\sigma})< \kappa(\widetilde P^{m}(\bar{\sigma}))$. We first claim that there exist $\bar{\eps}>0$ and~$\delta>0$ such that $\widetilde{\G}^{m}_{\varepsilon}(\sigma)< \kappa(\widetilde{P}^{m}_{\eps}(\sigma))$ for every $\sigma \in (\bar{\sigma}-\delta,\bar{\sigma}+\delta)$ and  every $\eps \leq \bar{\eps}$. By contradiction, suppose that this is not the case. Then, there exist~$\sigma_{k}\to \bar{\sigma}$ and~$\eps_{k}\to 0$ such that~$\widetilde\G^{m}_{\eps_{k}}(\sigma_{k}) \geq \kappa(\widetilde{P}^{m}_{\eps_{k}}(\sigma_{k}))$. By continuity and monotonicity of~$\widetilde{\Gamma}^{m}$, we have that~$\widetilde{\Gamma}^{m}_{\eps_{k}}(\sigma_{k}) \to \widetilde{\Gamma}^{m}(\sigma)$ in the Hausdorff metric of sets and~$\widetilde{P}^{m}_{\eps_{k}}(\sigma_{k}) \to \widetilde{P}^{m}(\sigma)$ as~$k\to \infty$. Hence, the continuity of the energy release rate and the hypothesis~(H4) lead us to the contradiction $\widetilde\G^{m}( \bar \sigma)\geq \kappa(\widetilde{P}^{m}(\bar{\sigma}))$.

Let~$\delta$ and~$\bar{\eps}$ be as above. We deduce from the Griffith's criterion~\eqref{II-85} that~$\tilde{l}^{m}_{\varepsilon}$ is constant in $(\bar{\sigma}-\delta,\bar{\sigma}+\delta)$ for every~$\eps\leq \bar{\eps}$. Since~$\tilde{l}^{m}_{\varepsilon}$ converges to~$\tilde{l}^{m}$ weakly* in $W^{1,\infty}( 0, S )$, we get that~$\tilde{l}^{m}$ is locally constant around~$\bar{\sigma}$, and this concludes the proof of~(pG3) and~(pG4).

In order to show that~$\Gamma(\tilde{t}(\sigma)) = \widetilde{\Gamma}(\sigma)$ for every~$\sigma\in[0,S]$ such that~$\tilde{t}'(\sigma)>0$, we define
\begin{displaymath}
s(t)\coloneqq \min\,\{s\in[0,S]:\, \tilde{t}(s) = t\}\qquad\text{for every~$t\in[0,T]$}\,.
\end{displaymath}
If $\tilde{t}'(\sigma)>0$, then we have~$s(\tilde{t}(\sigma)) = \sigma$ and~$s(\tilde{t}(\bar{\sigma})) \neq s(\tilde{t}(\sigma))$ for $\bar{\sigma}\neq \sigma$. Let us prove that~$\tilde{t}(\sigma)$ is a continuity point for~$\Gamma$, where the map~$ t \mapsto \Gamma(t)$ has been determined in Theorem~\ref{thm:bv}. By contradiction, assume that~$\tilde{t}(\sigma)$ is a discontinuity point of~$\Gamma$. Then, there exist $t_{\eps}^{1} < t_{\eps}^{2}$ such that $t_{\eps}^{1},t_{\eps}^{2}\to \tilde{t}(\sigma)$ and $\Gamma_{\eps}(t^{1}_{\eps}) \to \Gamma^{-}(\tilde{t}(\sigma))$ and~$\Gamma_{\eps}(t^{2}_{\eps}) \to \Gamma^{+}(\tilde{t}(\sigma))$ in the Hausdorff metric of sets, where we have denoted with~$\Gamma^{\pm}(\tilde{t}(\sigma))$ the left and right limits of~$\Gamma(t)$ in~$\tilde{t}(\sigma)$. As a consequence, $s(\tilde{t}(\bar{\sigma})) = s(\tilde{t}(\sigma))$ for~$\bar{\sigma} \in (\sigma - \Haus(\Gamma^{+}(\tilde{t}(\sigma))\setminus\Gamma^{-}(\tilde{t}(\sigma))), \sigma + \Haus(\Gamma^{+}(\tilde{t}(\sigma))\setminus\Gamma^{-}(\tilde{t}(\sigma))))$, which is a contradiction. Hence,~$\tilde{t}(\sigma)$ is a continuity point of~$t\mapsto \Gamma(t)$. Therefore,~$\widetilde{\Gamma}_{\eps}(\sigma) = \Gamma_{\eps} (\tilde{t}_{\eps}(\sigma))$ converges to~$\Gamma(\tilde{t}(\sigma))$ in the Hausdorff metric of sets. This implies that~$\Gamma(\tilde{t}(\sigma)) = \widetilde{\Gamma}(\sigma)$.

We conclude with the energy balance~\eqref{e:vbalance2}. Let $s\in[0,S]$. By the change of variable~$t= t_{\eps}(\sigma)$ in~\eqref{e:vbalance}, for~$\eps>0$ we have
\begin{equation}\label{e:vbalance3}
\begin{split}
\F (\tilde{t}_{\eps}(s) ; \widetilde{\Gamma}_{\eps}(s))  = & \ \F(0;\Gamma_{0}) 
+ \int_{0}^{s}\int_{\Om} \C\e \tilde{u}_{\eps}(\sigma) : \e\dot{w}(\tilde{t}_{\eps}(\sigma))\,\tilde{t}'_{\eps} (\sigma)\,\di x \, \di \sigma \\
& -  \sum_{m=1}^M \int_{0}^{s} (\widetilde{\G}^{m}_{\eps}(\sigma) - \kappa( \widetilde{P}^{m}_{\eps}(\sigma))) (\tilde{l}_{\eps}^{m})'(\sigma)\,\di \sigma \\
& - \int_{0}^{s}\int_{\Om} \dot{f}(\tilde{t}_{\eps}(\sigma)) \cdot \tilde{u}_{\eps}(\sigma)\,\tilde{t}'_{\eps} (\sigma)\,\di x \,\di \sigma 
- \int_{0}^{s} \int_{\Om} f(\tilde{t}_{\eps}(\sigma))\cdot \dot{w}(\tau) \,\tilde{t}'_{\eps} (\sigma)\, \di x  \, \di \sigma \\
& - \int_{0}^{s} \int_{\partial_{S}\Om} \dot g(\tilde{t}_{\eps}(\sigma)) \cdot \tilde{u}(\sigma)\,\tilde{t}'_{\eps} (\sigma) \, \di \Haus  \, \di \sigma 
- \int_{0}^{s}\int_{\partial_{S}\Om} g(\tilde{t}_{\eps}(\sigma) )\cdot \dot{w} (\tilde{t}_{\eps}(\sigma))\,\tilde{t}'_{\eps} (\sigma)\,\di \Haus \, \di\sigma\,,
\end{split}
\end{equation}
where we have set $\tilde{u}_{\eps}(\sigma)\coloneqq u_{\eps}(\tilde{t}_{\eps}(\sigma))$. Since~$\tilde{t}_{\eps}$ and~$\tilde{l}^{m}_{\eps}$ converge weakly* in~$W^{1,\infty}(0,S)$ to~$\tilde{t}$ and~$\tilde{l}^{m}$  and~$\widetilde{\G}^{m}_{\eps} \to \widetilde{\G}^{m}$ in~$L^{2}(0,S)$, passing to the limit as~$\eps\to 0$ in~\eqref{e:vbalance3} we get~\eqref{e:vbalance2}. This concludes the proof of the theorem.
\end{proof}

%
%
%
%
%

\subsection*{Acknowledgments}
The authors would like to acknowledge the kind hospitality of the Erwin Schr\"odinger International Institute for Mathematics and Physics (ESI), where part of this research was developed during the workshop {\it New trends in the variational modeling of failure phenomena}.
All authors would like to acknowledge the kind hospitality of the University of Naples Federico II, to which GL was affiliated when this research was initiated. 
SA wishes to thank the Technical University of Munich, where he worked during the preparation of this paper,
with partial support from the SFB project TRR109 \emph{Shearlet approximation of brittle fracture evolutions}. 
GL and IL are members of the {\em Gruppo Nazionale per l'Analisi Ma\-te\-ma\-ti\-ca, la Probabilit\`a e le loro Applicazioni} (GNAMPA) of the {\em Istituto Nazionale di Alta Matematica} (INdAM). 
GL received support from the 
INdAM-GNAMPA 2018 Project \emph{Analisi va\-ria\-zio\-na\-le per difetti e interfacce nei materiali}
and through the 2019 Project \emph{Modellazione e studio di propriet\`a asintotiche per problemi variazionali in fenomeni anelastici}.

\bibliographystyle{siam}
\bibliography{A_La_Lu_18bib}

\begin{thebibliography}{10}

\bibitem{MR3660449}
{\sc S.~Almi}, {\em Energy release rate and quasi-static evolution {\it via}
  vanishing viscosity in a fracture model depending on the crack opening},
  ESAIM Control Optim. Calc. Var., 23 (2017), pp.~791--826.

\bibitem{MR3844481}
{\sc S.~Almi and I.~Lucardesi}, {\em Energy release rate and stress intensity
  factors in planar elasticity in presence of smooth cracks}, NoDEA Nonlinear
  Differential Equations Appl., 25 (2018), pp.~Art. 28,.

\bibitem{MR3366673}
{\sc J.-F. Babadjian, A.~Chambolle, and A.~Lemenant}, {\em Energy release rate
  for non-smooth cracks in planar elasticity}, J. \'{E}c. polytech. Math., 2
  (2015), pp.~117--152.

\bibitem{MR1978582}
{\sc A.~Chambolle}, {\em A density result in two-dimensional linearized
  elasticity, and applications}, Arch. Ration. Mech. Anal., 167 (2003),
  pp.~211--233.

\bibitem{MR2665275}
{\sc A.~Chambolle, G.~A. Francfort, and J.-J. Marigo}, {\em Revisiting energy
  release rates in brittle fracture}, J. Nonlinear Sci., 20 (2010),
  pp.~395--424.

\bibitem{MR2385744}
{\sc A.~Chambolle, A.~Giacomini, and M.~Ponsiglione}, {\em Crack initiation in
  brittle materials}, Arch. Ration. Mech. Anal., 188 (2008), pp.~309--349.

\bibitem{MR3070557}
{\sc A.~Chambolle and A.~Lemenant}, {\em The stress intensity factor for
  non-smooth fractures in antiplane elasticity}, Calc. Var. Partial
  Differential Equations, 47 (2013), pp.~589--610.

\bibitem{MR3591293}
{\sc V.~Crismale and G.~Lazzaroni}, {\em Quasistatic crack growth based on
  viscous approximation: a model with branching and kinking}, NoDEA Nonlinear
  Differential Equations Appl., 24 (2017), pp.~Art. 7, 33.

\bibitem{MR2745199}
{\sc G.~Dal~Maso, A.~DeSimone, and F.~Solombrino}, {\em Quasistatic evolution
  for {C}am-{C}lay plasticity: a weak formulation via viscoplastic
  regularization and time rescaling}, Calc. Var. Partial Differential
  Equations, 40 (2011), pp.~125--181.

\bibitem{MR2915331}
\leavevmode\vrule height 2pt depth -1.6pt width 23pt, {\em Quasistatic
  evolution for {C}am-{C}lay plasticity: properties of the viscosity solution},
  Calc. Var. Partial Differential Equations, 44 (2012), pp.~495--541.

\bibitem{MR2186036}
{\sc G.~Dal~Maso, G.~A. Francfort, and R.~Toader}, {\em Quasistatic crack
  growth in nonlinear elasticity}, Arch. Ration. Mech. Anal., 176 (2005),
  pp.~165--225.

\bibitem{MR2836349}
{\sc G.~Dal~Maso and G.~Lazzaroni}, {\em Crack growth with
  non-interpenetration: a simplified proof for the pure {N}eumann problem},
  Discrete Contin. Dyn. Syst., 31 (2011), pp.~1219--1231.

\bibitem{MR3614643}
{\sc G.~Dal~Maso and M.~Morandotti}, {\em A model for the quasistatic growth of
  cracks with fractional dimension}, Nonlinear Anal., 154 (2017), pp.~43--58.

\bibitem{MR1897378}
{\sc G.~Dal~Maso and R.~Toader}, {\em A model for the quasi-static growth of
  brittle fractures: existence and approximation results}, Arch. Ration. Mech.
  Anal., 162 (2002), pp.~101--135.

\bibitem{MR2211809}
{\sc M.~A. Efendiev and A.~Mielke}, {\em On the rate-independent limit of
  systems with dry friction and small viscosity}, J. Convex Anal., 13 (2006),
  pp.~151--167.

\bibitem{MR2350364}
{\sc I.~Fonseca, N.~Fusco, G.~Leoni, and M.~Morini}, {\em Equilibrium
  configurations of epitaxially strained crystalline films: existence and
  regularity results}, Arch. Ration. Mech. Anal., 186 (2007), pp.~477--537.

\bibitem{MR1988896}
{\sc G.~A. Francfort and C.~J. Larsen}, {\em Existence and convergence for
  quasi-static evolution in brittle fracture}, Comm. Pure Appl. Math., 56
  (2003), pp.~1465--1500.

\bibitem{MR1633984}
{\sc G.~A. Francfort and J.-J. Marigo}, {\em Revisiting brittle fracture as an
  energy minimization problem}, J. Mech. Phys. Solids, 46 (1998),
  pp.~1319--1342.

\bibitem{MR3739927}
{\sc M.~Friedrich and F.~Solombrino}, {\em Quasistatic crack growth in
  2d-linearized elasticity}, Ann. Inst. H. Poincar\'{e} Anal. Non Lin\'{e}aire,
  35 (2018), pp.~27--64.

\bibitem{Grif163}
{\sc A.~A. Griffith}, {\em The phenomena of rupture and flow in solids},
  Philos. Trans. Royal Soc. A, 221 (1921), pp.~163--198.

\bibitem{MR2394124}
{\sc D.~Knees and A.~Mielke}, {\em Energy release rate for cracks in
  finite-strain elasticity}, Math. Methods Appl. Sci., 31 (2008), pp.~501--528.

\bibitem{MR2446401}
{\sc D.~Knees, A.~Mielke, and C.~Zanini}, {\em On the inviscid limit of a model
  for crack propagation}, Math. Models Methods Appl. Sci., 18 (2008),
  pp.~1529--1569.

\bibitem{MR3021776}
{\sc D.~Knees, R.~Rossi, and C.~Zanini}, {\em A vanishing viscosity approach to
  a rate-independent damage model}, Math. Models Methods Appl. Sci., 23 (2013),
  pp.~565--616.

\bibitem{MR3332887}
\leavevmode\vrule height 2pt depth -1.6pt width 23pt, {\em A quasilinear
  differential inclusion for viscous and rate-independent damage systems in
  non-smooth domains}, Nonlinear Anal. Real World Appl., 24 (2015),
  pp.~126--162.

\bibitem{MR2658341}
{\sc D.~Knees, C.~Zanini, and A.~Mielke}, {\em Crack growth in polyconvex
  materials}, Phys. D, 239 (2010), pp.~1470--1484.

\bibitem{MR1894435}
{\sc S.~G. Krantz and H.~R. Parks}, {\em The implicit function theorem},
  Birkh\"{a}user Boston, Inc., Boston, MA, 2002.
\newblock History, theory, and applications.

\bibitem{MR2802893}
{\sc G.~Lazzaroni and R.~Toader}, {\em Energy release rate and stress intensity
  factor in antiplane elasticity}, J. Math. Pures Appl. (9), 95 (2011),
  pp.~565--584.

\bibitem{MR2851705}
{\sc G.~Lazzaroni and R.~Toader}, {\em A model for crack propagation based on
  viscous approximation}, Math. Models Methods Appl. Sci., 21 (2011),
  pp.~2019--2047.

\bibitem{MR2525194}
{\sc A.~Mielke, R.~Rossi, and G.~Savar\'{e}}, {\em Modeling solutions with
  jumps for rate-independent systems on metric spaces}, Discrete Contin. Dyn.
  Syst., 25 (2009), pp.~585--615.

\bibitem{MR2887927}
\leavevmode\vrule height 2pt depth -1.6pt width 23pt, {\em B{V} solutions and
  viscosity approximations of rate-independent systems}, ESAIM Control Optim.
  Calc. Var., 18 (2012), pp.~36--80.

\bibitem{MR3531671}
\leavevmode\vrule height 2pt depth -1.6pt width 23pt, {\em Balanced viscosity
  ({BV}) solutions to infinite-dimensional rate-independent systems}, J. Eur.
  Math. Soc. (JEMS), 18 (2016), pp.~2107--2165.

\bibitem{MR3380972}
{\sc A.~Mielke and T.~Roub\'{\i}\v{c}ek}, {\em Rate-independent systems},
  vol.~193 of Applied Mathematical Sciences, Springer, New York, 2015.
\newblock Theory and application.

\bibitem{MR3264231}
{\sc M.~Negri}, {\em Quasi-static rate-independent evolutions:
  characterization, existence, approximation and application to fracture
  mechanics}, ESAIM Control Optim. Calc. Var., 20 (2014), pp.~983--1008.

\bibitem{MR2472402}
{\sc M.~Negri and C.~Ortner}, {\em Quasi-static crack propagation by
  {G}riffith's criterion}, Math. Models Methods Appl. Sci., 18 (2008),
  pp.~1895--1925.

\bibitem{MR3335535}
{\sc M.~Negri and R.~Toader}, {\em Scaling in fracture mechanics by {B}a\v zant
  law: from finite to linearized elasticity}, Math. Models Methods Appl. Sci.,
  25 (2015), pp.~1389--1420.

\end{thebibliography}

\end{document}